\documentclass[a4paper,10pt]{article}
\usepackage[utf8]{inputenc}

\usepackage{comment,xcomment,amssymb,amsmath,color}
\usepackage{rotating,xypic,array}
\usepackage{latexsym,mathtools,amsthm}
\usepackage{subfig}
\usepackage{tikz,fancyvrb}
\usepackage{booktabs}  % pacchetto per tabelle.
\usetikzlibrary{arrows.meta}
\usepackage{enumitem}
\usepackage{url} %accetta tilde e indirizzi web
\usepackage{longtable}
\usepackage[section]{placeins}
\usepackage{multirow,rotating}
% \usepackage{geometry}
% \usepackage[longtable]{multirow}	%%opzione per aggirare baco di qualche tipo

%opening
\title{Ricci-flat and Einstein pseudoriemannian nilmanifolds}
\author{Diego Conti and Federico A. Rossi}

\makeatletter
\def\namedlabel#1#2{\begingroup
   \def\@currentlabel{#2}%
   \label{#1}\endgroup
}
\makeatother

\newtheorem{theorem}{Theorem}[section]

\newtheorem{corollary}[theorem]{Corollary}
\newtheorem{proposition}[theorem]{Proposition}
\theoremstyle{definition}
\newtheorem{definition}[theorem]{Definition}
\newtheorem{example}[theorem]{Example}
\newtheorem{question}[theorem]{Question}
\theoremstyle{remark}
\newtheorem{remark}[theorem]{Remark}

\newcommand{\abs}[1]{\left\vert#1\right\vert}
\newcommand{\R}{\mathbb{R}}
\newcommand{\im}{\mathrm{Im}\,}         %image of a function
\newcommand{\lie}[1]{\mathfrak{#1}}     %Lie algebras
\newcommand{\g}{\lie{g}}
          %Lie derivative
\newcommand{\Z}{\mathbb{Z}}

\newcommand{\N}{\mathbb{N}}

\newcommand{\hook}{\lrcorner\,}
      %Lie groups

\newcommand{\so}{\mathfrak{so}}

\newcommand{\GL}{\mathrm{GL}}
\newcommand{\SL}{\mathrm{SL}}

\newcommand{\id}{\mathrm{Id}}   % the identity

\newcommand{\Sl}{\lie{sl}}
\newcommand{\Span}[1]{\operatorname{Span}\left\{#1\right\}}
\newcommand{\tran}[1]{\hspace{.2mm}\prescript{t\hspace{-.5mm}}{}{#1}}

\DeclareMathOperator{\Ric}{Ric}

\DeclareMathOperator{\End}{End}
\DeclareMathOperator{\Aut}{Aut}

\DeclareMathOperator{\ad}{ad}

\DeclareMathOperator{\logsign}{logsign}

\newcolumntype{C}{>{$}c<{$}}
\newcolumntype{L}{>{$}l<{$}}
\newcolumntype{R}{>{$}r<{$}}

\newcommand{\B}{\mathcal B}

\begin{document}
\VerbatimFootnotes
\maketitle

\begin{abstract}
This is partly an expository paper, where the authors' work on pseudoriemannian Einstein metrics on nilpotent Lie groups is reviewed. A new criterion is given for the existence of a diagonal Einstein metric on a nice nilpotent Lie group. Classifications of special classes of Ricci-flat metrics on nilpotent Lie groups of dimension $\leq8$ are obtained. Some related open questions are presented.
\end{abstract}
\renewcommand{\thefootnote}{\fnsymbol{footnote}} 
\footnotetext{\emph{MSC 2010}: 53C25; 53C50, 53C30, 22E25.}
\footnotetext{\emph{Keywords}: Einstein pseudoriemannian metrics, nilpotent Lie groups, nice Lie algebras.}
\footnotetext{This work was partially supported by GNSAGA of INdAM.}
\renewcommand{\thefootnote}{\arabic{footnote}} 

This paper contains an account of our work on pseudoriemannian Einstein metrics on nilpotent Lie groups and some new results, mostly regarding the Ricci-flat case. We restrict to left-invariant metrics, corresponding to scalar products $g$ on the corresponding Lie algebra, also called metrics; the Einstein condition
\begin{equation}
\label{eqn:einstein}
\Ric=\lambda g
\end{equation}is then an algebraic equation in the entries of $g$, though generally quite complicated. A solution to \eqref{eqn:einstein} is called a Ricci-flat metric when $\lambda=0$, and when $\lambda\neq0$ an Einstein metric of nonzero scalar curvature. The nilpotent Lie groups we consider often have rational structure constants, and therefore admit a lattice, i.e. a compact quotient (see \cite{Malcev}); thus, the solutions that we obtain typically determine compact Einstein manifolds.

Examples of Ricci-flat nilpotent Lie algebras appear in the literature in particular contexts: four-dimensional (\cite{Petrov:EinsteinSpaces}), bi-invariant (\cite{DelBarcoOvando,globke,Kath:NilpotentMetric}), nearly parak\"ahler (\cite{ContiRossi:ricci}), $G_2^*$-holonomy (\cite{FinoLujan:TorsionFreeG22}), or $2$-step (\cite{GuediriBinAsfour}). The first example of an  Einstein metric with nonzero scalar curvature on a nilpotent Lie algebra was constructed by the authors in \cite{ContiRossi:EinsteinNilpotent}.

The problem of constructing Einstein nilpotent Lie algebras has no Riemannian counterpart: by~\cite{Milnor:curvatures}, every Riemannian metric on a nonabelian nilpotent Lie algebra has a direction of positive Ricci curvature and a direction of negative Ricci curvature, and cannot therefore be Einstein. Nevertheless, there is a well-established theory of Einstein Riemannian solvmanifolds (see \cite{Lauret:niceEinstein} for a survey), within which some of the techniques we use originated. Indeed, the construction of Riemannian Einstein solvmanifolds is reduced to the study of the Ricci operator on a nilpotent Lie algebra, as they are characterized by the so-called nilsoliton equation (\cite{Lauret:Einstein_solvmanifolds}), involving the Ricci operator of the metric restricted to the nilradical.

Since \cite{Heber:noncompact}, an effective approach used to study the Ricci operator on a nilpotent Lie algebra is to parametrize metric Lie algebras by fixing an orthonormal basis and letting the structure constants vary; the Ricci operator can then be interpreted as a moment map in the sense of geometric invariant theory (\cite{Lauret}). In fact, the Ricci operator can also be viewed as a moment map in the sense of symplectic geometry (see \cite{ContiRossi:EinsteinNilpotent}). This is true for every signature, although the convexity properties exploited in \cite{Lauret} appear not to hold in the indefinite case.

% Another fruitful approach is the study of the scalar curvature functional. It was observed in \cite{Jensen:scalar} that Einstein metrics on a unimodular Lie algebra coincide with critical points of the scalar curvature functional among metrics of fixed volume; this fact generalizes to the pseudoriemannian case.

A considerable amount of research has been devoted to the classification of low-dimensional nilsolitons (see \cite{LauretWill:Einstein,KadiogluPayne:Computational,Will:RankOne,FernandezCulma,Nikolayevsky:FreeNilradical,Nikolayevsky:EinsteinDerivation,Arroyo}; more references can be found in   \cite{Lauret:niceEinstein}); most of these results employ, directly or indirectly, the notion of a nice basis. A basis $\{e_1,\dotsc, e_n\}$ of a Lie algebra is called nice if each $[e_i,e_j]$ is a multiple of some $e_h$ and each contraction $e_i\hook de^j$ is a multiple of some element of the dual basis $e^1,\dotsc, e^n$; nice bases were introduced in \cite{LauretWill:Einstein} in the study of nilsolitons, with the observation that $e_1,\dotsc, e_n$ are eigenvectors of the Ricci operator for any metric for which they form an orthogonal nice basis. 

In \cite{ContiRossi:Construction}, we defined a nice Lie algebra as a pair $(\g,\B)$, with $\B$ a nice basis on the Lie algebra $\g$, and an equivalence of nice Lie algebras as an isomorphism that maps basis elements to multiples of basis elements. This is the natural definition for classification purposes,  since the nice condition is unaffected by rescaling any element of the basis.  With this terminology, we have been able to classify nice nilpotent Lie algebras up to dimension $9$. The striking fact is that, at least up to dimension $7$, most nilpotent Lie algebras admit exactly one nice basis up to equivalence (see Theorems~\ref{thm:classification_nice_6} and \ref{thm:classification_nice_7}). This fact was proved in \cite{ContiRossi:Construction} using the classification of nilpotent Lie algebras of dimension $\leq7$ (see \cite{Gong}).

On a nice nilpotent Lie algebra, there are two natural classes of metrics that can be considered: diagonal metrics, that correspond to diagonal matrices in a nice basis, and $\sigma$-diagonal metrics, that correspond to diagonal matrices multiplied by a suitable order two permutation matrix $\sigma$. In this paper we will only consider the case where $\sigma$ is a \emph{diagram involution}, meaning that $[\sigma(e_i),\sigma(e_j)]$ is a multiple of $\sigma([e_i,e_j])$; this condition will be implicitly assumed for the rest of this introduction. It was shown in \cite{ContiRossi:EinsteinNice} that, for any diagram involution $\sigma$,  $\sigma$-diagonal metrics have a diagonal Ricci operator, like diagonal metrics. For both classes, the Einstein equation \eqref{eqn:einstein} reduces to a system of $n$ polynomial equations in $n$ unknowns. 
As $n$ increases, finding a solution (e.g. with a computer algebra system) or proving directly its nonexistence becomes harder. In fact, for $\lambda\neq0$, there is an algebraic obstruction to the existence of an Einstein metric (\cite{ContiRossi:EinsteinNilpotent}); this is only a necessary condition, though it holds for general invariant metrics on nilpotent Lie groups, nice or not. In \cite[Corollary 2.6]{ContiRossi:EinsteinNice} we obtained sharper necessary conditions for the existence of an Einstein metric in the nice diagonal and $\sigma$-diagonal settings. With some computational work, this led to a classification of nice nilpotent Lie algebras of dimension $\leq8$ carrying an Einstein metric of nonzero scalar curvature (see Section~\ref{section:nonzero}).

In this paper we determine conditions on a nice  Lie algebra that are both necessary and sufficient for the existence of an Einstein metric of diagonal or $\sigma$-diagonal type (Theorems~\ref{thm:diagonal} and~\ref{thm:sigmacompatible}). These conditions are still polynomial, but they involve a lower number of parameters and equations than \eqref{eqn:einstein}. We apply this criterion to the case $\lambda=0$, obtaining a classification of diagonal and $\sigma$-diagonal Ricci-flat metrics on nice nilpotent Lie algebras of dimension $\leq8$. In particular, we obtain a one-parameter family of non-isometric Ricci-flat metrics (Example~\ref{example:familyRicciFlatMetrics}).

This paper is organized as follows. The first section reviews the classification of nice nilpotent Lie algebras of dimension $\leq9$ and some open problems in this context. The second section contains a characterization of nice nilpotent Lie algebras admitting an Einstein metric of diagonal or $\sigma$-diagonal type.  The third section reviews our results for the case $\lambda\neq0$ and some related open problems. The final section is dedicated to the Ricci-flat case; it contains a classification of diagonal and $\sigma$-diagonal Ricci-flat metrics on nice nilpotent Lie algebras of dimension $\leq8$, as well as some remarks on the $2$-step case and examples related to parahermitian geometry.

\smallskip
\textbf{Acknowledgements}
We thank Viviana del Barco and the referee for useful suggestions.

\section{Nice Lie algebras}
In this section we survey our work on the classification of nice Lie algebras and state some open questions; for details, we refer to \cite{ContiRossi:Construction}.

The classification of nice Lie algebras is based on linear algebra and combinatorics. We define a \emph{labeled diagram} as a directed acyclic graph (with no multiple arrows) endowed with a function from the set of arrows to the set of nodes; the node so associated to an arrow is called its label. Two labeled diagrams will be regarded as isomorphic if there are compatible bijections between the corresponding nodes, arrows and labels; such a map is called an isomorphism. The group of self-isomorphisms of a labeled diagram $\Delta$ is called its group of automorphisms $\Aut(\Delta)$.

Given a labeled diagram, we write $i\xrightarrow{j}k$ to indicate an arrow from node $i$ to node $k$ labeled by the node $j$. We write $i\xrightarrow{j,k} v$ if there exists an $l$  such that $j\xrightarrow{k} l$ and $i\xrightarrow{l}v$ are arrows.

A labeled diagram is called a \emph{nice diagram} if the following hold:
\begin{enumerate}[label=(N\arabic*)]
\item\label{enum:condNice1} any two distinct arrows with the same source have different labels;
\item\label{enum:condNice2} any two distinct arrows with the same destination have different labels;
\item\label{enum:condNice3} if $i\xrightarrow{j}k$ is an arrow, then $i$ differs from $j$ and $j\xrightarrow{i}k$ is also an arrow;
\item\label{enum:condNice4} there do not exist four different nodes $i,j,k,v$ such that exactly one of
\[i\xrightarrow{j,k} v, \quad j\xrightarrow{k,i} v,\quad  k\xrightarrow{i,j} v\]
holds.
\end{enumerate}

Let $\g$ be a lie algebra; let $\B=\{e_1,\dotsc, e_n\}$ be a basis, and denote by $\B^*=\{e^1,\dotsc, e^n\}$ its dual basis. We say that $\B$ is \emph{nice} if 
\begin{itemize}
\item for any $e_i,e_j\in \B$, $[e_i,e_j]$ is a multiple of some element of $\B$;
\item for any $e_i\in\B$, $e^j\in\B^*$, $e_i\hook de^j$ is a multiple of some element of $\B^*$.
\end{itemize}
It is clear that rescaling one or more basis elements does not affect this definition, motivating the following:
\begin{definition}
A \emph{nice Lie algebra} is a pair $(\g,\B)$, where $\g$ is a Lie algebra and $\B$ a nice basis of $\g$. Two nice Lie algebras $(\g_1,\B_1)$ and $(\g_2,\B_2)$ are \emph{equivalent} if there exists a Lie algebra isomorphism $\g_1\cong\g_2$ that maps each element of $\B_1$ to a  multiple of an element of $\B_2$; such a map is called an \emph{equivalence}. 
\end{definition}
As a matter of notation, we will use a string of the form 
\begin{equation}
\label{eqn:reducible}
\texttt{62:4a}\quad (0,0,0,0,e^{13}+e^{24},e^{12}+e^{34})
\end{equation}
to indicate a Lie algebra with a basis $\B=\{e_1,\dotsc, e_6\}$ such that
\[de^1=0=\dots = de^4, \quad de^5=e^{13}+e^{24}, \quad de^6=e^{12}+e^{34}\]
(where, as usual, we have written $e^{ij}$ in lieu of $e^i\wedge e^j$); the label \texttt{62:4a} is the name of this Lie algebra in the classification of \cite{ContiRossi:Construction}.

A nice Lie algebra $(\g,\B)$ is \emph{reducible} if there exist nice Lie algebras $(\g_1,\B_1)$, $(\g_2,\B_2)$ such that $\g=\g_1\oplus \g_2$ and $\B=\B_1\cup \B_2$, and \emph{irreducible} otherwise. Notice that an irreducible nice Lie algebra may be reducible in the category of Lie algebras. For instance, the nice Lie algebra~\eqref{eqn:reducible} is isomorphic, but not equivalent, to the reducible nice Lie algebra 
\[\texttt{62:2} \quad (0,0,e^{12},0,0,e^{45}).\]

In this paper, we will only be interested in the case where $\g$ is nilpotent. To each nice nilpotent Lie algebra $(\g,\B)$ we can associate a nice diagram $\Delta$ by the following rules:
\begin{itemize}
\item the nodes of $\Delta$ are the elements of the nice basis $\B$;
\item there is an arrow $e_i\xrightarrow{e_j} e_k$ if $e_k$ is a nonzero multiple of $[e_i,e_j]$.
\end{itemize}
It is clear that equivalent nice Lie algebras determine isomorphic diagrams.

Conversely, nice diagrams can be used to construct nice nilpotent Lie algebras; however, this requires fixing some additional data. Given a nice diagram $\Delta$ with nodes $1,\dots, n$, let $\{e_i\}$ be the standard basis of $\R^n$ and $\{e^i\}$ its dual basis, and let $V_\Delta\subset \Lambda^2(\R^n)^*\otimes \R^n$ be the vector space  spanned by $e^{ij}\otimes e_k$, where $i\xrightarrow{j}k$ ranges among arrows of $\Delta$. We can parametrize the  $e^{ij}\otimes e_k$ by the index set $\mathcal{I}_\Delta$ that contains $\{\{i,j\},k\}$ whenever $i\xrightarrow{j}k$ is an arrow. 

The generic element of $V_\Delta$ has the form  
\[c=\sum_{I\in \mathcal{I}_\Delta} c_I e^I.\]
We shall also write $c=\sum c_{ijk}e^{ij}\otimes e_k$, where $\{\{i,j\},k\}$, $i<j$ ranges in $\mathcal{I}_\Delta$. Let $\mathring V_\Delta$ be the open subset of $V_\Delta$ where each coordinate $c_{I}$ is nonzero.

\begin{proposition}[\protect{\cite[Proposition 1.3]{ContiRossi:Construction}}]
For any $c$ in $V_\Delta$, suppose that the derivation $d$ of $\Lambda(\R^n)^*$ given by  $de^k=\sum c_{ijk}e^{ij}$ satisfies $d^2=0$, thereby defining a Lie algebra $\g=(\R^n,d)$. Then $(\g,\{e_i\})$ is nice and nilpotent. If in addition $c\in\mathring V_\Delta$, the nice diagram associated to this Lie algebra is $\Delta$.
\end{proposition}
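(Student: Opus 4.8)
The plan is to translate the combinatorial conditions defining a nice diagram directly into the two defining properties of a nice basis, and to read off nilpotency and the recovery of $\Delta$ from acyclicity and from the non-vanishing of the coordinates. Throughout I would work with the bracket determined by $d$: writing $[e_i,e_j]=\sum_k b_{ij}^k e_k$, the relation $de^k(e_i,e_j)=-e^k([e_i,e_j])$ gives $b_{ij}^k=-c_{ijk}$ for $i<j$, so that the $e_k$-component of $[e_i,e_j]$ is nonzero precisely when $i\xrightarrow{j}k$ is an arrow of $\Delta$ carrying a nonzero coordinate $c_{ijk}$. Since $d^2=0$ is assumed, $\g$ is a genuine Lie algebra and the only tasks are to verify that $\B=\{e_i\}$ is nice, that $\g$ is nilpotent, and that for $c\in\mathring V_\Delta$ the associated diagram is $\Delta$.

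For the first defining property of niceness I must show each $[e_i,e_j]$ is a multiple of a single $e_k$. The $e_k$ occurring are exactly the destinations of arrows with source $i$ and label $j$; by condition~\ref{enum:condNice1}, two arrows with the same source have distinct labels, so there is at most one arrow with source $i$ and label $j$, hence at most one such $k$ (if none, the bracket vanishes, which is still a multiple of a basis element). Nilpotency I would deduce from acyclicity: a topological sort of the underlying directed graph reorders the nodes so that every edge runs from a lower to a higher index, and by condition~\ref{enum:condNice3} the arrow $i\xrightarrow{j}k$ forces both $i<k$ and $j<k$; thus after reordering $de^k$ is a combination of $e^{ij}$ with $i,j<k$. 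This strictly-upper-triangular shape makes iterated brackets strictly raise the minimal index, so the lower central series terminates and $\g$ is nilpotent.

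The delicate point, and the one I expect to require the most care, is the second defining property, that $e_i\hook de^j$ is a multiple of a single $e^k$. Here I would compute directly, using $e_i\hook e^{pq}=\delta_{ip}e^q-\delta_{iq}e^p$, to obtain
\[
e_i\hook de^j=\sum_{q>i}c_{iqj}\,e^q-\sum_{p<i}c_{pij}\,e^p,
\]
both sums ranging over the relevant elements of $\mathcal{I}_\Delta$. Inspecting the two sums shows that every surviving term is a multiple of $e^l$, where $l$ is the label of an arrow from $i$ to $j$ (realized with source $i$ via condition~\ref{enum:condNice3}). Since the diagram has no multiple arrows there is at most one arrow from $i$ to $j$, so at most one label $l$ occurs and $e_i\hook de^j$ is a multiple of the single element $e^l$. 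The subtlety I want to flag is precisely that it is the "no multiple arrows" hypothesis — not \ref{enum:condNice1} or \ref{enum:condNice2} — that governs this sum, because the two potentially competing terms share source and destination but would carry distinct labels.

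Finally, assuming $c\in\mathring V_\Delta$, I would identify the associated diagram with $\Delta$ by comparing arrows in both directions. If $i\xrightarrow{j}k$ is an arrow of $\Delta$ then $c_{ijk}\neq0$, so the $e_k$-component of $[e_i,e_j]$ is nonzero; combined with the first property, $[e_i,e_j]$ is then a nonzero multiple of $e_k$, so $i\xrightarrow{j}k$ is an arrow of the associated diagram. Conversely, if $[e_i,e_j]$ is a nonzero multiple of $e_k$ then $c_{ijk}\neq0$, and since $V_\Delta$ only carries coordinates indexed by arrows of $\Delta$, the triple $i\xrightarrow{j}k$ must be an arrow of $\Delta$. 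As the two diagrams share the same nodes by construction, they coincide.
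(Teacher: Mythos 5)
Your proposal is correct. The paper quotes this proposition from the earlier reference without reproducing its proof, so there is no in-text argument to diverge from; your direct verification (the bracket condition from \ref{enum:condNice1}, nilpotency from acyclicity via a topological sort together with \ref{enum:condNice3}, the contraction condition from the ``no multiple arrows'' clause in the definition of a labeled diagram, and the recovery of $\Delta$ from the non-vanishing of the coordinates on $\mathring V_\Delta$) is the natural and expected one. Your observation that it is the simplicity of the underlying directed graph, rather than \ref{enum:condNice1} or \ref{enum:condNice2}, that forces $e_i\hook de^j$ to be a multiple of a single dual basis element is accurate and worth flagging, since those two conditions are the ones that superficially look relevant.
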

On $V_\Delta$, there is a natural action of $\Aut(\Delta)$; explicitly, if $\sigma$ is a permutation of the nodes that belongs to $\Aut(\Delta)$, we write
\begin{equation}
 \label{eqn:tildesigma}
\tilde\sigma(e^{ij}\otimes e_k)=e^{\sigma_i}\wedge e^{\sigma_j}\otimes e_{\sigma_k}.
\end{equation}
In addition, the Lie group $D_n$ of invertible diagonal matrices of order $n$ acts naturally on $V_\Delta$. By construction, two elements $c,c'$ of $V_\Delta$ define equivalent Lie algebras if they are in the same $\Aut(\Delta)\ltimes D_n$-orbit.

Let $\g$ be nice with diagram $\Delta$ and fix an ordering on the index set $\mathcal{I}_\Delta$. In this paper we will use lexicographic ordering, obtained by associating to each $\{\{i,j\},k\}\in \mathcal{I}_\Delta$ with $i<j$ the triplet $(k,i,j)$, i.e.
 \[\{\{i,j\},k\}< \{\{l,m\},h\} \iff (k < h) \vee( k=h\wedge i<l),\quad i<j, l<m.\]
Notice that by \ref{enum:condNice2}, the two elements of $\mathcal{I}_\Delta$ coincide when $k=h$ and $i=l$.

The action of $D_n$ on $V_\Delta$ has weight vectors $e^{ij}\otimes e_k$; we denote by $M_\Delta$ the matrix whose rows are the weights for this action, following the ordering of $\mathcal{I}_\Delta$. Explicitly, if $E_{11},\dotsc, E_{nn}$ is the canonical basis of $d_n$, we have 
\[(x_1E_{11}+\dots + x_nE_{nn})(e^{ij}\otimes e_k)=(x_k-x_i-x_j)e^{ij}\otimes e_k,\]
so the weight of $e^{ij}\otimes e_k$ is the linear map
\[x_1E_{11}+\dots + x_nE_{nn}\mapsto x_k-x_i-x_j.\]
Up to a sign convention,  $M_\Delta$ is known as the \emph{root matrix} in the literature.
\begin{example}
In the case of \eqref{eqn:reducible}, we have
\[M_\Delta=
 \begin{pmatrix}
-1& 0 &-1&0 & 1 & 0 \\
0&-1& 0 &-1& 1 & 0 \\
-1& -1 &0&0 & 0 & 1 \\
0&0&-1& -1 &0 & 1 
 \end{pmatrix}.
\]
The first row, for instance, corresponds to the weight vector $e^{13}\otimes e_5$; in other words, to the fact that $[e_1,e_3]$ is a nonzero multiple of $e_5$. The other rows are obtained in the same way.
\end{example}

The root matrix is a useful tool in the construction of Einstein metrics, since the existence of a Riemannian nilsoliton metric on a nice Lie algebra only depends on the root matrix (\cite{Nikolayevsky}). It is also useful in the  classification of nice Lie algebras; for instance,  \cite{KadiogluPayne:Computational} used the root matrix to classify nice nilpotent Lie algebras with invertible root matrix and simple pre-Einstein derivation in dimensions $\leq 8$. In this case, any two elements of $\mathring V_\Delta$ define equivalent Lie algebras and the group $\Aut(\Delta)$ is trivial.

In the general case, one needs to study the set of $\Aut(\Delta)\ltimes D_n$-orbits  in $V_\Delta$; since $\Aut(\Delta)$ is discrete, it is natural to break the problem in two and describe a section for the action of $D_n$ first. One possible way to do so is described in \cite{Payne:Methods}, by taking a subspace $\Delta_0^p$ that corresponds to a linear subspace in logarithmic coordinates. Our approach is to consider a linear subspace in the coordinates $c_{I}$.

By way of notation, since the entries of $M_\Delta$ are integers, we can define its reduction $\operatorname{mod} 2$, that will be indicated by $M_{\Delta,2}$.

\begin{proposition}
[\protect{\cite[Proposition 2.2]{ContiRossi:Construction}}]
\label{prop:fundamental_domain}
Choose $\mathcal{J}_{\Delta,2}\subset\mathcal{J}_{\Delta}\subset \mathcal{I}_\Delta$ so that $\mathcal{J}_{\Delta,2}$ parametrizes a maximal set of $\Z_2$-linearly independent rows of $M_{\Delta,2}$ and $\mathcal{J}_{\Delta}$ parametrizes a maximal set of $\R$-linearly independent rows of $M_{\Delta}$. Set
\[
W=\left\{\sum c_I E_I\in V_\Delta\mid c_I=1\ \forall I\in \mathcal{J}_{\Delta,2},\ c_I= \pm 1 \ \forall I\in \mathcal{J}_{\Delta}\setminus \mathcal{J}_{\Delta,2}\right\}.\]
Then $\mathring{W}=W\cap  \mathring{V}_\Delta$ is a fundamental domain in $\mathring{V}_\Delta$ for the action of $D_n$.
\end{proposition}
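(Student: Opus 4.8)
The plan is to make the $D_n$-action on $V_\Delta$ explicit in the coordinates $c_I$ and to separate it into a real ``magnitude'' part controlled by $M_\Delta$ and a ``sign'' part controlled by $M_{\Delta,2}$. From the weight computation above, $\diag(t_1,\dotsc,t_n)$ scales $c_I\mapsto\bigl(\prod_m t_m^{(M_\Delta)_{Im}}\bigr)c_I$, i.e. it multiplies each coordinate by the monomial whose exponent vector is the $I$-th row of $M_\Delta$. Since $D_n$ preserves $\mathring V_\Delta$, on this set I would write $t_m=\sigma_m e^{s_m}$ with $\sigma_m\in\{\pm1\}$, $s_m\in\R$; then $\abs{c_I}\mapsto e^{(M_\Delta s)_I}\abs{c_I}$ while $\sign(c_I)\mapsto \sign(c_I)\prod_m\sigma_m^{(M_{\Delta,2})_{Im}}$, the latter because $\sigma_m^{(M_\Delta)_{Im}}$ depends only on the parity of the exponent. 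Thus the logarithms of the magnitudes are translated by the image of $M_\Delta$ over $\R$, and the sign vector in $\Z_2^{\mathcal I_\Delta}$ is translated by the image of $M_{\Delta,2}$ over $\Z_2$, the two being driven by the independent parameters $s$ and $\sigma$. I would also record that the nesting $\mathcal J_{\Delta,2}\subset\mathcal J_\Delta$ can indeed be arranged, since rows whose reductions are $\Z_2$-independent are automatically $\R$-independent (a nontrivial integer dependence with coprime coefficients reduces to a nontrivial $\Z_2$-dependence).

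For existence I would fix $c\in\mathring V_\Delta$ and use that the submatrix $M_{\mathcal J}$ of $M_\Delta$ on the rows $\mathcal J_\Delta$ is surjective onto $\R^{\mathcal J_\Delta}$, as its rows are $\R$-independent and number $\operatorname{rank}M_\Delta$; solving $M_{\mathcal J}s=-(\log\abs{c_I})_{I\in\mathcal J_\Delta}$ normalizes $\abs{c_I}=1$ for $I\in\mathcal J_\Delta$. Independently, the corresponding submatrix of $M_{\Delta,2}$ on $\mathcal J_{\Delta,2}$ is surjective over $\Z_2$, so a suitable choice of signs $\sigma$ forces $\sign(c_I)=+1$ for $I\in\mathcal J_{\Delta,2}$ without touching the magnitudes. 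The resulting point then lies in $W$, and still in $\mathring V_\Delta$ as $D_n$ preserves it, hence in $\mathring W$.

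For uniqueness I would take $c,c'\in\mathring W$ with $c'=\diag(t)\cdot c$ and show $c=c'$. On $\mathcal J_\Delta$ both magnitudes equal $1$, so $(M_\Delta s)_I=0$ there; since $\mathcal J_\Delta$ indexes a maximal $\R$-independent set, every row of $M_\Delta$ is a combination of these, whence $M_\Delta s=0$ and $\abs{c'_I}=\abs{c_I}$ for all $I$. Likewise, both signs equal $+1$ on $\mathcal J_{\Delta,2}$, giving $(M_{\Delta,2}\tau)_I=0$ there for the sign vector $\tau$ of $t$; maximality of the $\Z_2$-independent set then yields $M_{\Delta,2}\tau=0$ and $\sign(c'_I)=\sign(c_I)$ for all $I$. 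Combining, $c=c'$, so each orbit meets $\mathring W$ exactly once.

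The routine part is the weight bookkeeping; the step I expect to require the most care is the clean decoupling of the torus action into its real and mod-$2$ factors, so that the $\R$-rank of $M_\Delta$ governs the magnitude normalization through $\mathcal J_\Delta$ while the $\Z_2$-rank of $M_{\Delta,2}$ governs the sign normalization through $\mathcal J_{\Delta,2}$. With that separation established, both existence and uniqueness reduce to surjectivity and the maximality of the chosen families of rows.
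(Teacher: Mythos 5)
Your argument is correct and complete: the decoupling of the $D_n$-action into a real translation of $\log\abs{c_I}$ by $\im M_\Delta$ and a $\Z_2$-translation of the sign vector by $\im M_{\Delta,2}$, with existence from surjectivity of the chosen row submatrices and uniqueness from maximality of $\mathcal{J}_\Delta$ and $\mathcal{J}_{\Delta,2}$, is exactly the mechanism behind this statement, which the paper itself only cites from \cite{ContiRossi:Construction} without reproducing a proof. Your side remark that $\Z_2$-independence of the reduced rows forces $\R$-independence of the integer rows (so the nested choice $\mathcal{J}_{\Delta,2}\subset\mathcal{J}_\Delta$ is always possible) is also a correct and worthwhile addition.
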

Considering now the full group $\Aut(\Delta)\ltimes D_n$, it is not difficult to compute the action of $\Aut(\Delta)$ on the set of connected components of $\mathring W$, and choose connected components $W_1,\dotsc, W_k$, one in each orbit.  By construction, elements of different families are in different  $\Aut(\Delta)\ltimes D_n$-orbits (so they cannot define equivalent nice Lie algebras), although a single $W_i$ may contain two points in the same orbit. Finally imposing the quadratic equations corresponding to the Jacobi equality, one obtains (at most) $k$ inequivalent families of nice Lie algebras with diagram $\Delta$.

Implementing this strategy with a computer (see \url{https://github.com/diego-conti/DEMONbLAST}), we obtained:
\begin{theorem}[\protect{\cite[Proposition 3.1]{ContiRossi:Construction}}]
\label{thm:classification_nice_6}
Among the 34  nilpotent Lie algebras of dimension $6$:
\begin{itemize}
\item one does not admit any nice bases;
\item 3 admit exactly two inequivalent nice bases;
\item the remaining 30 admit exactly one nice basis up to equivalence.
\end{itemize}
\end{theorem}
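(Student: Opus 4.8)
The plan is to reduce the statement to a finite combinatorial enumeration, matching two classifications against each other: the list of the $34$ nilpotent Lie algebras of dimension $6$, available from \cite{Gong}, on one side, and an exhaustive list of nice diagrams on $6$ nodes on the other. The guiding principle is that every nice basis of a $6$\dash dimensional nilpotent Lie algebra determines a nice diagram $\Delta$ on $6$ nodes, and conversely every such $\Delta$ gives rise, via Proposition~\ref{prop:fundamental_domain}, to a controllable finite family of nice Lie algebras.

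First I would enumerate all nice diagrams $\Delta$ on the node set $\{1,\dots,6\}$, that is, all directed acyclic graphs with labels satisfying \ref{enum:condNice1}--\ref{enum:condNice4}. This is a finite search: since the diagrams are acyclic there is a topological ordering of the nodes, and conditions \ref{enum:condNice1}--\ref{enum:condNice3} constrain the admissible arrows and labels sharply, so the search tree can be pruned aggressively while retaining completeness. Second, for each such $\Delta$ I would produce the associated nice Lie algebras. The fundamental domain $\mathring W\subset\mathring V_\Delta$ provides a finite set of normal forms for the $D_6$\dash action; passing to $\Aut(\Delta)\ltimes D_6$\dash orbits, by computing the induced action of $\Aut(\Delta)$ on the connected components of $\mathring W$ through \eqref{eqn:tildesigma}, and finally imposing the Jacobi condition $d^2=0$, leaves for each $\Delta$ a finite list of inequivalent nice Lie algebras $(\g,\B)$ realizing that diagram.

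Third, I would identify each resulting $\g$ with one of the $34$ abstract algebras, using isomorphism invariants (the dimensions of the terms of the lower and upper central series, of the derived algebra, and of $\Der(\g)$, supplemented where necessary by explicit isomorphisms). The key structural fact that organizes the bookkeeping is that equivalent nice Lie algebras have isomorphic diagrams: hence two nice bases on the same abstract $\g$ arising from non\dash isomorphic diagrams are automatically inequivalent, whereas two coming from the same diagram are equivalent precisely when they lie in the same $\Aut(\Delta)\ltimes D_6$\dash orbit, which has already been resolved in the previous step. Tallying the number of inequivalent nice bases attached to each abstract algebra then yields the count in the statement: one algebra receiving none, three receiving two, and the remaining thirty receiving exactly one.

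The main obstacle is exhaustiveness and correctness of this matching rather than any single hard inequality or estimate. I must guarantee that the diagram enumeration is genuinely complete, so that the unique algebra admitting no nice basis is certified to appear in none of the families, and that the abstract isomorphism type is assigned correctly in every case, including the delicate situation of a Lie algebra that is reducible as a Lie algebra yet carries a nice structure that is irreducible in the sense of the excerpt, as with \eqref{eqn:reducible}. It is the sheer volume and subtlety of this casework that makes a machine computation (as the authors carry out with \texttt{DEMONbLAST}) the natural tool; done by hand the procedure is feasible but long, and the real risk lies in a single overlooked diagram or a misidentified isomorphism class.
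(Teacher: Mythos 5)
Your proposal follows essentially the same route as the paper: enumerate nice diagrams, cut down $\mathring V_\Delta$ to the fundamental domain of Proposition~\ref{prop:fundamental_domain}, quotient by the induced $\Aut(\Delta)$-action on connected components, impose the Jacobi identity, and match the resulting families against the known list of $34$ six-dimensional nilpotent Lie algebras from \cite{Gong} --- which is precisely the strategy the authors implement (by machine, with \texttt{DEMONbLAST}) to obtain Theorem~\ref{thm:classification_nice_6}. You also correctly flag the two delicate points the paper itself emphasizes, namely that a single component $W_i$ need not be a strict cross-section of the orbits and that a nice Lie algebra can be irreducible in the nice category while reducible as a Lie algebra.
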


\begin{figure}[thp]
\caption{\label{fig:inequivalentdiagrams6}
Nice diagrams associated to inequivalent nice bases on the Lie algebra $N_{6,2,5}$
}
\includegraphics[width=0.4\textwidth]{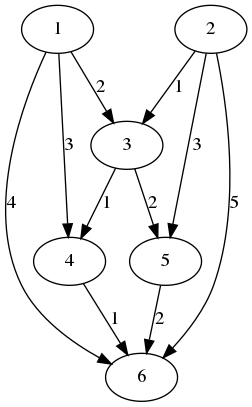}\hfill
\includegraphics[width=0.4\textwidth]{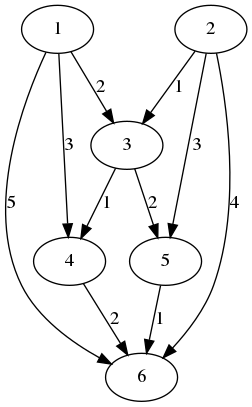}
\end{figure}

The analogous statement in dimension 7 is complicated by the fact that continuous families of Lie algebras appear.
\begin{theorem}[\protect{\cite[Theorem 3.2]{ContiRossi:Construction}}]
\label{thm:classification_nice_7}
Among the 175 isolated nilpotent Lie algebras of dimension $7$:
\begin{itemize}
\item 34 does not admit any nice bases;
\item 11 admit exactly two inequivalent nice bases;
\item the remaining 130 admit exactly one nice basis up to equivalence.
\end{itemize}
Among the $9$ families of nilpotent Lie algebras of dimension $7$, with reference to the notation of \cite{Gong},
\begin{itemize}
 \item
 $12457N$, $12457N_2$, $1357N$, $1357S$ (for $\lambda\leq 0$), $147E_1$ (for $\lambda>1, \lambda\neq2$)
 do not admit a nice basis;
\item $1357QRS_1$ for $\lambda= 1$ admits exactly two inequivalent nice bases;
\item $1357S$ for $\lambda> 0$, $147E_1$ (for $\lambda=2$), $147E$, $123457I$, $1357QRS_1$ (for $\lambda\neq1$), $1357M$ admit exactly one nice basis up to equivalence.
\end{itemize}
\end{theorem}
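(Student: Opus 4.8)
The plan is to reduce the statement to the exhaustive enumeration of inequivalent nice nilpotent Lie algebras of dimension $7$ sketched earlier, and then to match the output against Gong's classification \cite{Gong}. The key bookkeeping observation is that a nice basis on $\g$ determines, up to equivalence, a nice Lie algebra $(\g,\B)$, and that inequivalent nice bases on a fixed $\g$ correspond exactly to inequivalent nice Lie algebras whose underlying Lie algebra is isomorphic to $\g$: transporting a pair $(\g_1,\B_1)$ with $\g_1\cong\g$ along the isomorphism turns it into a nice basis of $\g$, and equivalence of the nice Lie algebras is precisely equivalence of the bases. Thus counting nice bases on each Gong algebra amounts to (i) producing the complete list of inequivalent nice nilpotent Lie algebras of dimension $7$; (ii) computing the isomorphism class of the underlying Lie algebra of each; and (iii) grouping by isomorphism type and counting.

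First I would enumerate all nice diagrams $\Delta$ on $7$ nodes, i.e.\ directed acyclic graphs with labels obeying (N1)--(N4), which is a finite combinatorial search carried out up to diagram isomorphism. For each $\Delta$ I would form $V_\Delta$ and its root matrix $M_\Delta$ and apply Proposition~\ref{prop:fundamental_domain} to obtain the fundamental domain $\mathring W$ for the $D_n$-action. When $M_\Delta$ has full row rank, $\mathring W$ is a finite set of sign choices and $\Delta$ contributes only isolated Lie algebras; when $M_\Delta$ is rank-deficient, $\mathring W$ has positive dimension equal to $|\mathcal{I}_\Delta|-\operatorname{rank} M_\Delta$, and $\Delta$ contributes a continuous family — this is exactly where Gong's $9$ parametrized families will originate. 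In either case I would impose the quadratic $d^2=0$ (Jacobi) equations and then pass to the $\Aut(\Delta)$-orbits on the connected components of $\mathring W$, obtaining one representative per equivalence class of nice Lie algebra with diagram $\Delta$.

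The heart of the argument, and the main obstacle, is step (ii): identifying the underlying isomorphism type of each constructed nice Lie algebra with an entry of Gong's list. This is the Lie algebra isomorphism problem, for which there is no closed form. I would separate most cases with discrete invariants — dimensions of the lower and upper central series, the ranks of the associated graded pieces, the Gong type string, and finer invariants where these fail — and construct explicit isomorphisms to settle the remainder. For the isolated algebras this yields the partition $34+11+130$: an algebra of Gong's list isomorphic to no constructed nice Lie algebra admits no nice basis, while one isomorphic to exactly one or to exactly two inequivalent nice Lie algebras admits that many nice bases. The delicate point is that two \emph{inequivalent} nice Lie algebras can share an underlying Lie algebra (as with $\texttt{62:4a}\cong\texttt{62:2}$), so it is the isomorphism test, not mere inequivalence, that governs the count.

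Finally, for the $9$ families the parameter $\lambda$ must be tracked through the identification. Each rank-deficient diagram yields a family of nice Lie algebras parametrized by the free coordinates of $\mathring W$ (together with the signs and the $\Aut(\Delta)$-action); I would compute the underlying isomorphism type as a function of these parameters, match the resulting one-parameter family to a Gong family, and solve for the correspondence of parameters, being careful to quotient by the internal equivalences of Gong's families (typically a reparametrization of $\lambda$). This is what produces the $\lambda$-dependent answers — the threshold behaviour of $1357S$ across $\lambda=0$, and the exceptional values $\lambda=1,2$ for $147E_1$ and $\lambda=1$ for $1357QRS_1$ — since only certain ranges or special values of $\lambda$ fall in the image of the construction and therefore admit a nice basis. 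Because the diagram enumeration is exhaustive, every ``no nice basis'' assertion follows once the relevant $\lambda$ are shown to miss all of the constructed families.
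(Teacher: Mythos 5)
Your proposal is correct and follows essentially the same route as the paper: the theorem is obtained by the computer-assisted enumeration of nice diagrams and their fundamental domains (Proposition~\ref{prop:fundamental_domain}), imposing Jacobi and quotienting by $\Aut(\Delta)\ltimes D_n$, and then matching the resulting nice Lie algebras against Gong's classification of $7$-dimensional nilpotent Lie algebras, exactly as described in Section~1 and carried out in \cite{ContiRossi:Construction}. You also correctly isolate the two genuinely delicate points — that the count is governed by the Lie algebra isomorphism test rather than nice-equivalence, and that the parameter $\lambda$ must be tracked through the identification with Gong's families.
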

Extending these results to higher dimensions is made difficult by the fact that nilpotent Lie algebras are not classified, although with the same methods we have been able to classify \emph{nice} nilpotent Lie algebras of dimension $8$ and $9$ (see \cite{ContiRossi:Construction}). Nevertheless, it is natural to ask whether the low-dimensional behaviour generalizes.

\begin{question}
\label{question:f}
For fixed $n\in\N$, are there nilpotent Lie algebras of dimension $n$ with an infinite number of inequivalent nice bases? What is the largest number $f(n)\in\N\cup\{+\infty\}$ of inequivalent nice bases that can be found on a nilpotent Lie algebra of dimension $n$?
\end{question}

By the above-mentioned result, we know that $f(n)=1$ for $n\leq5$ and $f(6)=2=f(7)$. In addition, in \cite[Corollary 3.7]{ContiRossi:Construction}, we proved that the number of inequivalent nice bases on a fixed nilpotent Lie algebra is at most countable.

\begin{proposition}
The function $f\colon \N\to \N\cup\{+\infty\}$ defined in Question~\ref{question:f} is nondecreasing and unbounded; more precisely,
\[f(n)\geq \left[\frac n6\right]+1.\]
\end{proposition}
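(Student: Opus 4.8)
The plan is to prove the two assertions separately. Monotonicity is the easier half: given a nilpotent Lie algebra $\g$ of dimension $n$ realizing $f(n)$ inequivalent nice bases, I would form the direct sum $\g\oplus\R$, where $\R$ is the abelian one-dimensional Lie algebra with its (unique) nice basis $\{e\}$. Any nice basis $\B$ of $\g$ yields a nice basis $\B\cup\{e\}$ of $\g\oplus\R$, and I would check that inequivalent nice bases on $\g$ remain inequivalent after adjoining $e$. The point is that the extra central abelian summand carries no bracket relations, so an equivalence of $\g\oplus\R$ respecting the decomposition restricts to an equivalence on the $\g$-part; the mild subtlety to address is that \emph{a priori} an equivalence of $\g\oplus\R$ need not respect the splitting, but since $\B\cup\{e\}$ is nice and $e$ is the only basis vector lying in the (suitably characterized) complement to the derived-plus-bracket-image subalgebra, its image is forced. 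This gives $f(n+1)\geq f(n)$, hence $f$ is nondecreasing.

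For the quantitative lower bound, the natural strategy is to build, for each $m$, a single nilpotent Lie algebra of dimension $6m$ admitting at least $m+1$ inequivalent nice bases, and then combine this with monotonicity to interpolate the remaining dimensions. The engine is the dimension-$6$ Lie algebra $N_{6,2,5}$ highlighted in Theorem~\ref{thm:classification_nice_6} and Figure~\ref{fig:inequivalentdiagrams6}, which carries two inequivalent nice bases, with distinct nice diagrams. I would consider the $m$-fold direct sum $\g_m=N_{6,2,5}^{\oplus m}$ of dimension $6m$. On each summand I may independently choose one of the two nice bases, and the union is again a nice basis of $\g_m$; this produces $2^m$ nice bases, but of course many of these are equivalent under permuting the isomorphic summands, so the honest count of inequivalent bases is governed by the number of orbits of the symmetric group $S_m$ acting on the $m$-tuple of per-summand choices. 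Since each summand has a binary choice, that orbit count equals the number of ways to choose how many summands use the second basis, namely $m+1$.

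The main obstacle is the last claim, that these $m+1$ combinations really are pairwise inequivalent — this is where I expect to spend the most care. The clean way to see it is through the associated nice diagram: by the discussion preceding Theorem~\ref{thm:classification_nice_6}, equivalent nice Lie algebras determine isomorphic diagrams, so it suffices to show that the $m+1$ choices give non-isomorphic labeled diagrams. The diagram of a direct sum is the disjoint union of the summand diagrams, and since the two nice bases on $N_{6,2,5}$ have \emph{non-isomorphic} diagrams $\Delta_0,\Delta_1$ (Figure~\ref{fig:inequivalentdiagrams6}), a choice using $j$ copies of the second basis yields the disjoint union of $m-j$ copies of $\Delta_0$ and $j$ copies of $\Delta_1$; these are distinguished, e.g., by the number of connected components isomorphic to $\Delta_1$, hence pairwise non-isomorphic for $j=0,\dots,m$. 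The one genuine point to verify is that an isomorphism of disjoint unions of connected diagrams must send connected components to connected components (so that it reduces to a matching of summand diagrams), which is immediate since graph isomorphisms preserve connectedness. This yields $f(6m)\geq m+1$, and applying monotonicity to pass from $6m$ up to any $n$ with $6m\leq n$, i.e. taking $m=[n/6]$, gives $f(n)\geq[n/6]+1$, which in particular shows $f$ is unbounded.
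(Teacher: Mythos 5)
Your proposal is correct and follows essentially the same route as the paper: both halves --- monotonicity via adjoining an abelian summand, and unboundedness via direct sums of $N_{6,2,5}$ equipped with its two inequivalent nice bases, distinguished by counting connected components of the associated non-isomorphic diagrams --- coincide with the paper's argument. The only imprecision is in the monotonicity step: the image of the adjoined vector $e$ under an equivalence is \emph{not} forced once $\g$ itself has central basis vectors outside $\g'$ (i.e.\ isolated vertices in its diagram, which do occur when you iterate $\g\mapsto\g\oplus\R$); the paper handles this cleanly by observing that equivalence of nice bases depends only on the non-isolated part $\B_+=\B\setminus\bigl(\B\cap(Z\setminus\g')\bigr)$, so adding isolated vertices cannot merge equivalence classes.
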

\begin{proof}
We first show that $f$ is nondecreasing. Given a Lie algebra $\g$ with center $Z$ and derived Lie algebra $\g'$, we can decompose each nice basis as
\[\B=\B_0\cup \B_+,\]
where $\B_0=\B\cap (Z\setminus\g')$ and $\B_+$ is its complement; this corresponds to decomposing the nice diagram into the union of the subgraph of isolated vertices and the subgraph of vertices of positive degree.

It is clear that two bases $\B$ and $\B'$ on $\g$ are equivalent if and only if $\B_+$ is equivalent to $\B'_+$. Therefore, if $\B$ and $\B'$ are inequivalent nice bases on a nilpotent Lie algebra $\g$ of dimension $n$, $\B\cup\{e_{n+1}\}$ and $\B'\cup\{e_{n+1}\}$ are inequivalent nice bases on $\g\oplus
\R$. This shows that $f(n+1)\geq f(n)$.

To see that $f$ is unbounded, let $\g$ be the nilpotent Lie algebra denoted by $N_{6,2,5}$ in \cite{Gong}. As shown in \cite{ContiRossi:Construction}, $\g$ has two inequivalent nice bases; the associated nice diagrams $\Delta_1$ and $\Delta_2$ (see Figure~\ref{fig:inequivalentdiagrams6}) are connected and not isomorphic.

Given $h,k\in\N$, we can define a nice diagram
$\Delta_{hk}$
by adjoining $h$ copies of $\Delta_1$ and $k$ copies of $\Delta_2$; this determines a nice basis on the nilpotent Lie algebra \[\underbrace{\g\oplus \dots\oplus \g}_{h+k}.\] Two such nice bases can only be equivalent if the underlying nice diagrams $\Delta_{hk}$ and $\Delta_{h'k'}$ are isomorphic; in turn, this implies $h=h'$ and $k=k'$, because diagram isomorphisms map connected components to connected components.

This shows that $f(6n)\geq n+1$; since $f$ is nondecreasing, the statement follows.
\end{proof}
Another striking consequence of the classification is that two nice bases on a fixed nilpotent Lie algebra of dimension $\leq 7$ with isomorphic diagrams are always equivalent; thus, a nilpotent Lie algebra of dimension $\leq7$ has as many nice diagrams as inequivalent nice bases. It is then natural to ask:

\begin{question}
How many nonisomorphic nice diagrams can a nilpotent Lie algebra have?
\end{question}

\begin{question}
How many inequivalent nice bases with the same nice diagram can a nilpotent Lie algebra have?
\end{question}

% We denote by $\llangle,\rrangle$ the Euclidean scalar product on $\R^n$ and by $\langle,\rangle$ the scalar product $\Tr(XY)$ on $d_n$, so that
% \[\llangle X,Y\rrangle = \langle X^D,Y^D\rangle.\]

\section{Einstein metrics on nice Lie algebras}
\label{sec:einsteinonnice}
Nice Lie algebras lend themselves to the construction of Einstein metrics. In this section we review the method of  (\cite[Section 2]{ContiRossi:EinsteinNice}) to construct Einstein pseudoriemannian metrics on a nice nilpotent Lie algebra and provide a new condition on a fixed Lie algebra to determine whether the method can be applied.

We are interested in left-invariant metrics on a Lie group $G$, which can be identified with scalar products on its Lie algebra $\g$; such a scalar product will be called a \emph{metric} on $\g$. We shall consider two distinct classes of metrics, namely \emph{diagonal} and \emph{$\sigma$-diagonal} metrics.

By our definition, nice Lie algebras are endowed with a nice basis $\B$; a metric on a nice Lie algebra is \emph{diagonal} if its basis is orthogonal. Fixing an order in the basis, we define the \emph{signature} of a diagonal metric $\sum g_i e^i\otimes e^i$ as the vector $(\logsign g_i)\in\Z_2^n$, where
\[\logsign x=\begin{cases} 0 & x>0 \\ 1 & x<0\end{cases},\]
the notation being justified by the identity $(-1)^{\logsign x}=\operatorname{sign}(x)$. We shall also write $\logsign g$ for the vector with entries $\logsign g_i$.

If a nice Lie algebra $\g$ is reducible (in the nice category), a diagonal metric on $\g$ is the direct sum of diagonal metrics on its factors; geometrically, this situation corresponds to a product metric. In particular, if the diagonal metric on  $\g$ is Einstein, so is the metric on each factor;
for this reason, diagonal metrics are most interesting when the nice Lie algebras are irreducible.

For $v=(v_1,\dotsc, v_n)\in\R^n$, let $v^D\in d_n$ be the diagonal matrix with entry $v_i$ at $(i,i)$. The root matrix determines a homomorphism of abelian Lie algebras
\[M_\Delta^D\colon d_n\to d_m, \qquad M_\Delta^D(v^D)=(M_\Delta(v))^D,\]
which is the differential at the identity of the Lie group homomorphism
\[e^{M_\Delta}\colon D_n\to D_m, \qquad e^{M_\Delta}(g^D)
 =\bigl(\prod_{j=1}^n g_j^{(M_\Delta)_{1j}},\dots,\prod_{j=1}^n g_j^{(M_\Delta)_{mj}}\bigr)^D
\]

Diagonal metrics are naturally parametrized by elements $g=(g_1,\dotsc, g_n)^D\in D_n$. It turns out (see \cite{LauretWill:Einstein}) that the Ricci tensor of a diagonal metric is again diagonal; for an explicit formula, we will use the following:
\begin{proposition}[\cite{ContiRossi:EinsteinNice}]
\label{prop:ricci}
Let $g$ be a diagonal metric on a nice Lie algebra with diagram $\Delta$ and structure constants $c$. Define $X$ by 
\[X^D= e^{M_\Delta}(g)(c^D)^2.\]
Then the Ricci operator is given by
\[\Ric = \frac12 (\tran{M_\Delta} X)^D.\]
\end{proposition}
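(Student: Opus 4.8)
The plan is to start from the standard expression for the Ricci curvature of a left-invariant pseudoriemannian metric on a nilpotent Lie algebra and then use the nice condition to diagonalize and collapse the resulting double sums into a sum over the arrows of $\Delta$. First I would replace $\B$ by the $g$-orthonormal basis $\tilde e_i=e_i/\sqrt{\abs{g_i}}$, with $\langle\tilde e_i,\tilde e_i\rangle=\epsilon_i:=\sign(g_i)$, and recall the Koszul formula for the Levi-Civita connection of a left-invariant metric. Since $\g$ is nilpotent, each $\ad_{e_i}$ is trace-free, so the mean-curvature terms drop out and the Ricci tensor reduces to
\[\ric(\tilde e_p,\tilde e_q)=-\tfrac12\sum_i\epsilon_i\langle[\tilde e_p,\tilde e_i],[\tilde e_q,\tilde e_i]\rangle+\tfrac14\sum_{i,j}\epsilon_i\epsilon_j\langle[\tilde e_i,\tilde e_j],\tilde e_p\rangle\langle[\tilde e_i,\tilde e_j],\tilde e_q\rangle,\]
the sign factors $\epsilon_i$ coming from the indefinite traces; I would either cite this formula or rederive it from Koszul.

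Next I would verify that $\Ric$ is diagonal in $\B$. In the first sum a nonzero term forces $[\tilde e_p,\tilde e_i]$ and $[\tilde e_q,\tilde e_i]$ to be multiples of one and the same $\tilde e_k$, i.e.\ two arrows $p\xrightarrow{i}k$, $q\xrightarrow{i}k$ sharing label and destination, which condition~\ref{enum:condNice2} forbids unless $p=q$; in the second sum a nonzero term forces the single bracket $[\tilde e_i,\tilde e_j]$ to be a multiple of both $\tilde e_p$ and $\tilde e_q$, again forcing $p=q$. This recovers the eigenvector property recalled in the introduction, and reduces the problem to computing the eigenvalues $\mu_p$, characterized by $\langle\Ric\,\tilde e_p,\tilde e_p\rangle=\epsilon_p\mu_p$.

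The heart of the matter is the diagonal computation together with the sign bookkeeping. For the bracket indexed by $I=\{\{i,j\},k\}\in\mathcal{I}_\Delta$ one has $[\tilde e_i,\tilde e_j]=\pm c_I\sqrt{\abs{g_k}}/\sqrt{\abs{g_i}\abs{g_j}}\,\tilde e_k$, so every squared contribution carries $c_I^2\abs{g_k}/(\abs{g_i}\abs{g_j})$; combining these absolute values with the accompanying $\epsilon$'s through $\epsilon_i/\abs{g_i}=1/g_i$ and $\epsilon_k\abs{g_k}=g_k$ converts each unsigned factor into the signed quantity $g_k/(g_ig_j)$. This is precisely the $I$-th entry of $e^{M_\Delta}(g)$, since the row $(M_\Delta)_I$ carries $+1$ in position $k$ and $-1$ in positions $i,j$; thus each term equals $X_I=(\prod_l g_l^{(M_\Delta)_{Il}})c_I^2$. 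The first sum then contributes $-\tfrac12 X_I$ to $\mu_p$ for every bracket having $p$ as an input, while the second sum contributes $+\tfrac12 X_I$ for every bracket landing on $p$; here the $\tfrac14$ of the quartic term is promoted to $\tfrac12$ because the sum over ordered pairs $(i,j)$ counts each bracket twice. Since these signs are exactly the entries $(M_\Delta)_{Ip}$ read off the columns of $M_\Delta$, summing gives $\mu_p=\tfrac12(\tran{M_\Delta}X)_p$, as claimed.

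The step I expect to be the main obstacle is this last sign bookkeeping in the indefinite setting: one must check that all the factors $\epsilon_i$ produced by the pseudoriemannian traces and by the orthonormalization recombine to reproduce $e^{M_\Delta}(g)$ with the genuinely signed entries $g_i$ rather than $\abs{g_i}$, and that the promotion of $\tfrac14$ to $\tfrac12$ is consistent with the single $\tfrac12$ in front of $\tran{M_\Delta}X$. Once the identity is confirmed on one arrow, conditions~\ref{enum:condNice1} and~\ref{enum:condNice2} guarantee that distinct arrows do not interfere, and the general formula follows by summation over $\mathcal{I}_\Delta$.
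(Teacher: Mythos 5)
Your proof is correct and the computation checks out: the collapse of the two sums via conditions \ref{enum:condNice1}--\ref{enum:condNice2}, the conversion $\epsilon_i/\abs{g_i}=1/g_i$, $\epsilon_k\abs{g_k}=g_k$ turning the orthonormalized absolute values into the signed entries $g_k/(g_ig_j)$ of $e^{M_\Delta}(g)$, and the promotion of $\tfrac14$ to $\tfrac12$ from the ordered double sum all work exactly as you describe. The paper itself states this proposition without proof, citing \cite{ContiRossi:EinsteinNice}, and your derivation from the standard Ricci formula for left-invariant metrics on a nilpotent (hence unimodular, with vanishing Killing form and mean curvature vector) Lie algebra is precisely the expected argument.
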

The Einstein equation with cosmological constant $\frac12k$ then reads $(\tran{M_\Delta} X)^D=k\id$; we shall denote by $[k]$ the vector in $\R^m$ with all entries equal to $k$, and equivalently  write
\[\tran{M_\Delta} X=[k].\]
Given real vectors $X=(x_1,\dotsc, x_m)$ and $\alpha=(\alpha_1,\dotsc, \alpha_m)$, in usual multiindex notation we shall write $\abs{X}^\alpha=\prod_{j=1}^m\abs{x_j}^{\alpha_j}$.

The existence of a diagonal Einstein metric can be determined via the following:
\begin{theorem}
\label{thm:diagonal}
Let $\g$ be a nice Lie algebra with diagram $\Delta$ and structure constants $c\in V_\Delta$. Then $\g$ has a diagonal metric of signature $\delta$ satisfying $\Ric=\frac12k\id$, $k\in\R$ if and only if for some $X\in \R^m$:
\begin{description}
\item[($\mathbf{K}$)\namedlabel{enum:condK}{($\mathbf{K}$)}] $\tran M_\Delta X=[k]$:
\item[($\mathbf{H}$)\namedlabel{enum:condH}{($\mathbf{H}$)}] $X$ does not belong to any coordinate hyperplane;
\item[($\mathbf{L}$)\namedlabel{enum:condL}{($\mathbf{L}$)}] $\logsign X=M_{\Delta,2}\delta$;
\item[($\mathbf{P}$)\namedlabel{enum:condP}{($\mathbf{P}$)}] for a basis $\alpha_1,\dotsc, \alpha_k$ of $\ker \tran{M_\Delta}$, we have
\[\abs{X}^{\alpha_i}=\abs{c}^{2\alpha_i}, \quad i=1,\dotsc, k.\]
\end{description}
\end{theorem}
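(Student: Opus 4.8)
The plan is to translate the Einstein condition, which is multiplicative in the metric coefficients $g_1,\dotsc,g_n$, into a linear problem by passing to logarithms, treating magnitudes and signs separately. By Proposition~\ref{prop:ricci} a diagonal metric $g=(g_1,\dotsc,g_n)^D$ satisfies $\Ric=\frac12k\id$ exactly when the associated vector $X$, defined by $X^D=e^{M_\Delta}(g)(c^D)^2$, obeys $\tran{M_\Delta}X=[k]$, which is condition~\ref{enum:condK}. Thus the real task is to characterize which $X\in\R^m$ arise as $X^D=e^{M_\Delta}(g)(c^D)^2$ for some diagonal metric $g$ of prescribed signature $\delta$. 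Componentwise this reads
\[X_j=\Bigl(\prod_{l=1}^n g_l^{(M_\Delta)_{jl}}\Bigr)c_j^2,\]
and, since a metric is nondegenerate, every $g_l\neq0$; assuming the structure constants are all nonzero (that is, $c\in\mathring V_\Delta$), this forces each $X_j\neq0$, which is condition~\ref{enum:condH}.

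To extract the remaining conditions I would split each $X_j$ into magnitude and sign. Taking absolute values and logarithms (applied entrywise) turns the displayed identity into the linear relation $\log\abs X=M_\Delta\log\abs g+2\log\abs c$ in $\R^m$, so that $\log\abs X-2\log\abs c$ lies in $\operatorname{im} M_\Delta$. Since $\operatorname{im} M_\Delta=(\ker\tran{M_\Delta})^\perp$, this membership is equivalent to orthogonality to a basis $\alpha_1,\dotsc,\alpha_k$ of $\ker\tran{M_\Delta}$; exponentiating the identities $\langle\alpha_i,\log\abs X\rangle=\langle\alpha_i,2\log\abs c\rangle$ recovers precisely $\abs X^{\alpha_i}=\abs c^{2\alpha_i}$, condition~\ref{enum:condP}. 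For the signs I would use that $c_j^2>0$, so reducing the sign of the displayed identity modulo $2$ gives $\logsign X=M_{\Delta,2}\logsign g$ in $\Z_2^m$; imposing $\logsign g=\delta$ yields condition~\ref{enum:condL}.

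With this dictionary in place both implications fall out. For the forward direction, given an Einstein diagonal metric of signature $\delta$ I would set $X$ as in Proposition~\ref{prop:ricci} and read off conditions \ref{enum:condK}, \ref{enum:condH}, \ref{enum:condL} and \ref{enum:condP} from the computations above. For the converse, given $X$ satisfying the four conditions, I would first invoke \ref{enum:condP} to place $\log\abs X-2\log\abs c$ in $\operatorname{im} M_\Delta$, solve $M_\Delta v=\log\abs X-2\log\abs c$ for some $v\in\R^n$, and set $\abs{g_l}=e^{v_l}$; then I would fix the signs of the $g_l$ by $\logsign g_l=\delta_l$, obtaining a diagonal metric of signature $\delta$. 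The magnitudes of $X$ are reproduced by construction, and \ref{enum:condL} guarantees $\prod_l\sign(g_l)^{(M_\Delta)_{jl}}=\sign(X_j)$, so the signs match as well; hence $X^D=e^{M_\Delta}(g)(c^D)^2$, and \ref{enum:condK} together with Proposition~\ref{prop:ricci} gives $\Ric=\frac12k\id$.

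The hard part will be the interplay between \ref{enum:condP} and \ref{enum:condL}: together they must cut out exactly the image of the non-surjective map $g\mapsto X$, with \ref{enum:condP} handling the affine constraint on magnitudes and \ref{enum:condL} the coset constraint on signs. The decoupling of these two is what makes the argument go through, and it is possible only because the positive factor $c_j^2$ is invisible to the sign computation. The point demanding the most care is to verify that \ref{enum:condL} is exactly the compatibility condition ensuring that the signs dictated by $\delta$ agree with the signs of the prescribed $X$, so that the reconstructed metric genuinely recovers $X$ and not a vector differing from it in some signs.
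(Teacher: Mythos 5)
Your proposal is correct and follows essentially the same route as the paper: both proofs separate a diagonal metric into sign and magnitude (the paper writes $g=(-1)^\delta\exp v$, you write $\logsign g=\delta$ and $\abs{g_l}=e^{v_l}$), reduce the sign part modulo $2$ to obtain \ref{enum:condL}, and convert the solvability of $\log\abs{X}-2\log\abs{c}=M_\Delta(v)$ into \ref{enum:condP} via $\im M_\Delta=(\ker\tran{M_\Delta})^\perp$. The decoupling of signs and magnitudes that you flag as the delicate point is exactly the mechanism the paper relies on, and your observation that $c\in\mathring V_\Delta$ (forced by $\Delta$ being the diagram of $\g$) is what makes \ref{enum:condH} and the logarithms legitimate is consistent with the paper's implicit use of $\log\abs{c}$.
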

\begin{proof}
Given $\delta\in(\Z_2)^n$, write $(-1)^\delta$ for the matrix $((-1)^{\delta_1},\dots,(-1)^{\delta_n})^D$. A generic element of $D_n$ has the form $g=(-1)^\delta \exp v$, with $v\in d_n$. 
% We must solve
% \[e^{M_\Delta}(g)=X^D (c^D)^2\]
We then have
\[e^{M_\Delta}(g)=(-1)^{M_{\Delta,2}(\delta)}\exp (M_\Delta(v)^D).\]
The metric $g$  has signature $\delta$; it solves $\Ric=\frac12k\id$ if 
\[\tran{M_\Delta} X=[k], \quad X^D=(-1)^{M_{\Delta,2}(\delta)}\exp (M_\Delta(v)^D)(c^D)^2;\]
equivalently, $X$ is characterized by
\[\logsign X=M_{\Delta,2}\delta, \quad \abs{X}^D=\exp (M_\Delta(v)^D)(c^D)^2.\]
Taking componentwise logarithms, we find
\begin{equation}
\label{eqn:Xcsquaredv}
\log \abs{X}- 2\log\abs{c} = M_\Delta(v),
\end{equation}
where the left-hand side denotes a vector with entries $\log \abs{x_i}-2\log\abs{c_i}$.
Since $\im M_\Delta$ is $\ker \alpha_1\cap \dotsc\cap \ker \alpha_k$, \eqref{eqn:Xcsquaredv} has a solution in $v$ if and only if 
\[\alpha_i(\log \abs{X}-\log c^2)=0, \quad i=1,\dotsc,k,\]
which is equivalent to Condition~\ref{enum:condP}.
\end{proof}
A similar result was presented in \cite[Theorem 2.3]{ContiRossi:EinsteinNice}, which characterized the existence of the metric in terms of the existence of a solution to the polynomial equation $e^{M_\Delta}(g)=X$, with $X$ as in \ref{enum:condK}; in practice, applying this criterion amounts to solving a polynomial system of $n$ equations in $n$ unknowns. The improvement of Theorem~\ref{thm:diagonal} is that \ref{enum:condP} corresponds to a polynomial system  of $k$ equations in $k$ unknowns, and $k$ is typically less than $n$; for instance when $n=8$, case-by-case computations show that the maximum value of $k$ is $5$.

\begin{remark}
\label{rem:ActionKerMDelta}
By construction, given $X$ as in Theorem~\ref{thm:diagonal}, the metric $g$ is obtained by solving $X^D= e^{M_\Delta}(g)(c^D)^2$. In particular, $X$ determines the metric uniquely up to the kernel of $e^{M_\Delta}$.

To understand this ambiguity in the choice of $g$, identify $\g$ with $\R^n$ by fixing an order in the nice basis. Then $g=(g_1,\dotsc, g_n)^D\in D_n$ defines a Lie algebra automorphism if for any nonzero bracket $[e_i,e_j]=c_{ijk}e_k$ one has $[ge_i,ge_j]=c_{ijk}ge_k$, i.e. $\frac{g_k}{g_ig_j}=1$. This holds precisely when $e^{M_\Delta}(g)=\id$; therefore, $\ker e^{M_\Delta}$ coincides with the group of diagonal automorphisms of $\g$.

Thus, when $e^{M_\Delta}(g)=e^{M_\Delta}(h)$, we obtain a Lie algebra automorphism  $f=gh^{-1}\colon \g\to\g$. If $g$ and $h$ lie in the same connected component of $D_n$ (in particular, they have the same signature), then we can write $f=\exp t$ so that $\exp t/2$ defines an isometry between the metric Lie algebras $(\g,\sum g_i(e^i)^2)$ and $(\g,\sum h_i(e^i)^2)$. Thus, $X$ determines the metric in an essentially unique way, at least for fixed signature; we refer to \cite{ContiRossi:Construction} for more details.
\end{remark}

\begin{example}\label{es:754321:9}
We illustrate the procedure in the example of the one-parameter families of nice Lie algebras %754321:9#22487111
\[\texttt{754321:9} \quad (0,0,(1-\lambda) e^{12},e^{13},\lambda e^{14}+e^{23},e^{15}+e^{24},e^{16}+e^{25}+e^{34}).\]
In this case there are no Einstein metrics of nonzero scalar curvature because there exist derivations with nonzero trace (see~Theorem~\ref{th:OstruzioneEinstein}); in terms of Theorem~\ref{thm:diagonal}, this is reflected in the fact that $M_\Delta X=[1]$ has no solution. For $k=0$, $M_\Delta X=0$ has solutions of the form
\[X=(x_9,x_8+x_9+x_7,x_8,-x_8-x_9-x_7,-x_8-x_9-x_7,x_7,-x_8-x_9,x_8,x_9).\]
The structure constants are $c=(\lambda,1,1-\lambda,1,\dots, 1)$. 
Condition~\ref{enum:condP} gives
\begin{equation}
 \label{eqn:22487111x}
\begin{gathered}
{\left| \frac{x_9^{2}}{ {(x_9+x_7+x_8)} {(x_9+x_8)}}\right|}={(-1+\lambda)}^{2},\quad {\left|\frac{x_7}{x_9+x_7+x_8}\right|}=1,\\
{\left|\frac{x_8^{2}}{ {(x_9+x_7+x_8)} {(x_9+x_8)}}\right|}=\lambda^{2}.
\end{gathered}
\end{equation}
Condition~\ref{enum:condH} implies $x_8+x_9\neq0$, so the second equation in \eqref{eqn:22487111x} is satisfied if $x_7=-x_7-x_8-x_9$, i.e. 
\[X=(x_9,-x_7,x_8,x_7,x_7,x_7,2x_7,x_8,x_9).\]
This is sufficient to prove that this nice Lie algebra has no diagonal Ricci-flat metric, since  $\logsign X$ is not in $\im M_{\Delta,2}$.

For comparison, we note that applying \cite[Theorem 2.3]{ContiRossi:Construction} (or Proposition~\ref{prop:ricci}) shows the Ricci-flat condition to be equivalent to the system
\begin{equation}
 \label{eqn:22487111g}
\begin{gathered}
\frac{g_3}{g_1g_2}=\frac{x_9}{(1-\lambda)^2}, \qquad 
\frac{g_4}{g_1g_3}=x_8+x_9+x_7, \qquad 
\frac{g_5}{g_1g_4}=\frac{x_8}{\lambda^2},\\
\frac{g_5}{g_2g_3}=-x_8-x_9-x_7,\qquad 
\frac{g_6}{g_1g_5}=-x_8-x_9-x_7, \qquad 
\frac{g_6}{g_2g_4}=x_7, \\
\frac{g_7}{g_1g_6}=-x_8-x_9, \qquad 
\frac{g_7}{g_2g_5}=x_8,\qquad
\frac{g_7}{g_3g_4}=x_9.
\end{gathered}
\end{equation}
Equation~\eqref{eqn:22487111x} is obtained by eliminating the $g_i$ from this system, and the condition on $\logsign X$ means that, after imposing \eqref{eqn:22487111x}, it is not possible to choose the signs of the $g_i$ consistently in order to satisfy \eqref{eqn:22487111g}.
\end{example}
\begin{remark}
In the above example, it is possible to solve separately $\abs{X}^{\alpha_i}=\abs{c}^{2\alpha_i}$ and $\logsign X=M_{\Delta,2}(\delta)$, with $X$ in $\ker \tran{M_\Delta}$; the essential fact is that the two equations cannot be solved simultaneously.
\end{remark}

The second class of metrics that we consider is that of $\sigma$-diagonal metrics. Given a permutation of order two $\sigma\in\Sigma_n$, we say that a $\sigma$-diagonal metric is a scalar product of the form
\[\sum g_ie^i\otimes e^{\sigma_i},\]
where $g$ is a $\sigma$-invariant element of $(\R^*)^n$. By construction, $\logsign g$ is always an element of $((\Z_2)^n)^\sigma$, i.e. a $\sigma$-invariant element of $(\Z_2)^n$. Notice that the signature of the scalar product depends on both $\logsign g$ and $\sigma$.

We will consider the case where $\sigma$ is a \emph{diagram involution}, i.e. an element of $\Aut(\Delta$); we therefore have a commutative diagram
\[
 \xymatrix{
 \R^n\ar[r]^{M_\Delta}\ar[d]^\sigma & \R^m\ar[d]^\sigma \\
 \R^n\ar[r]^{M_\Delta} & \R^m}
\]
Both the endomorphisms labeled by $\sigma$ are symmetric of order two. In addition we have a linear map $\tilde\sigma\colon V_\Delta\to V_\Delta$ defined by \eqref{eqn:tildesigma}; denoting by $c\in V_\Delta$ the structure constants vector, we set 
\[\tilde c=\tilde\sigma (c)=\sum c_{ijk}e^{\sigma_i,\sigma_j}\otimes e_{\sigma_k}.\]
A straightforward generalization of Proposition~\ref{prop:ricci} gives
\begin{proposition}[\cite{ContiRossi:EinsteinNice}]
\label{prop:ricciSigma}
Let $\Delta$ be a nice diagram with a diagram involution $\sigma$, and let $g$ be a $\sigma$-diagonal metric on a nice Lie algebra with diagram $\Delta$ and structure constants $c$. Define $X$ by 
\[X^D= e^{M_\Delta}(g)c^D \tilde c^D.\]
Then the Ricci operator is given by
\[\Ric = \frac12 (\tran{M_\Delta} X)^D.\]
\end{proposition}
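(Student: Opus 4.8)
The statement is advertised as a straightforward generalization of Proposition~\ref{prop:ricci}, so the plan is to rerun its derivation, keeping careful track of how the passage from a diagonal to a $\sigma$-diagonal metric alters each ingredient. One might first hope to avoid this by diagonalizing the metric, e.g. replacing each pair $e_i,e_{\sigma_i}$ by $\frac{1}{\sqrt2}(e_i\pm e_{\sigma_i})$ to turn the split blocks of a $\sigma$-diagonal metric into a genuinely diagonal one; but such a basis is no longer nice, so the reduction to Proposition~\ref{prop:ricci} fails and a direct computation in the nice basis is unavoidable. I would therefore start from the general coordinate formula for the Ricci operator of a left-invariant metric on a nilpotent, hence unimodular, Lie group. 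In the unimodular case the mean-curvature covector vanishes, so $\Ric$ is a sum of terms each quadratic in the structure constants, the two factors being joined by contracting a pair of indices against the inverse Gram matrix $g^{ab}$ of the metric. A representative term is schematically
\[
\langle\Ric\,e_a,e_b\rangle\sim\sum_{i,j,k,l}g^{ik}g^{jl}\,\langle[e_i,e_j],e_a\rangle\,\langle[e_k,e_l],e_b\rangle,
\]
the remaining terms being of $\ad$--$\ad^{*}$ type; the precise coefficients are immaterial for the strategy.

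The first observation is the effect of $\sigma$-diagonality on raising indices. For $g=\sum g_i\,e^i\otimes e^{\sigma_i}$ the Gram matrix is $\diag(g)$ composed with the permutation matrix of $\sigma$, so, using $g_{\sigma_i}=g_i$, its inverse satisfies $g^{ik}=g_i^{-1}\,\delta_{k,\sigma_i}$. Every contraction against $g^{ab}$ thus forces the contracted index to be transported by $\sigma$: in the display above it collapses $k$ to $\sigma_i$ and $l$ to $\sigma_j$, turning the second bracket into $[e_{\sigma_i},e_{\sigma_j}]$. The nice hypothesis then reduces each bracket to a multiple $c_{ijk}e_k$ of a single basis vector, and the diagram-involution hypothesis guarantees that $[e_{\sigma_i},e_{\sigma_j}]$ is a multiple of $\sigma[e_i,e_j]$, i.e. of $e_{\sigma_k}$. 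Where the diagonal case produces the factor $c_{ijk}^2$ of Proposition~\ref{prop:ricci} from pairing a bracket with itself, here one pairs $[e_i,e_j]$ with its $\sigma$-conjugate, so $c_{ijk}^2$ is replaced by $c_{ijk}\,c_{\sigma_i\sigma_j\sigma_k}$; by the definition $\tilde c=\tilde\sigma(c)$ this is exactly the coordinate of $c^D\tilde c^D$ attached to the arrow $i\xrightarrow{j}k$.

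The main step, and the place where the diagram-involution hypothesis is indispensable, is to show that $\Ric$ is still a \emph{diagonal operator}, i.e. that $\langle\Ric\,e_a,e_b\rangle$ vanishes unless $b=\sigma_a$ (so that the associated bilinear form is $\sigma$-diagonal). In the diagonal-metric case the off-diagonal entries vanish by the nice conditions \ref{enum:condNice1}--\ref{enum:condNice4}; here one must rerun that cancellation after the $\sigma$-twist. A surviving cross term would force two arrows to share source, destination or label in a configuration that, after applying $\sigma$ to the conjugated factor, is forbidden by \ref{enum:condNice1}--\ref{enum:condNice2} (distinct arrows with a common source or destination have distinct labels), the remaining triple-overlap configuration being excluded by \ref{enum:condNice4}. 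Because $\sigma\in\Aut(\Delta)$ permutes arrows, the family of forbidden configurations is $\sigma$-stable, so the exclusions already used in the diagonal case transport verbatim to their $\sigma$-images. I expect this bookkeeping---matching each potentially nonzero off-diagonal term with the nice condition that annihilates it, now in the presence of $\sigma$-conjugated structure constants---to be the only delicate part of the argument.

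Once diagonality is established the rest is formal. Collecting the surviving diagonal contributions to $\langle\Ric\,e_a,e_{\sigma_a}\rangle=\mu_a g_a$, the term displayed above yields the positive contributions (arrows producing $e_a$) and the $\ad$--$\ad^{*}$ terms the negative ones (arrows consuming $e_a$), each arrow contributing the factor $\frac{g_k}{g_ig_j}c_{ijk}\tilde c$ just as $\frac{g_k}{g_ig_j}c_{ijk}^2$ does in Proposition~\ref{prop:ricci}; this is precisely the coordinate of $X$ with $X^D=e^{M_\Delta}(g)\,c^D\tilde c^D$. The incidence signs $\delta_{ka}-\delta_{ia}-\delta_{ja}$ assemble these into $\frac12(\tran{M_\Delta}X)_a$, where one uses the $\sigma$-invariance of $X$ (immediate from $g_{\sigma_i}=g_i$ and $\tilde c_{\sigma_i\sigma_j\sigma_k}=c_{ijk}$) to identify the sum over arrows producing $e_{\sigma_a}$ with the sum over arrows producing $e_a$. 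The commutative square relating $M_\Delta$ to $\sigma$ guarantees that $\tran{M_\Delta}X$ is itself $\sigma$-invariant, consistent with the $\sigma$-equivariance of $\Ric$, which closes the argument.
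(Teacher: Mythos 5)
The paper offers no proof of Proposition~\ref{prop:ricciSigma} to compare against: the statement is imported from \cite{ContiRossi:EinsteinNice} with only the remark that it is a straightforward generalization of Proposition~\ref{prop:ricci}. Judged on its own, your outline takes the only reasonable route --- a direct computation with the standard Ricci formula for a nilpotent (hence unimodular, with vanishing Killing form and mean curvature vector) Lie algebra --- and it correctly identifies the two points where $\sigma$ enters: the inverse Gram matrix $g^{ik}=g_i^{-1}\delta_{k,\sigma_i}$ transports every contracted index by $\sigma$, so the self-pairing $c_{ijk}^2$ of the diagonal case is replaced by $c_{ijk}c_{\sigma_i\sigma_j\sigma_k}$, which is precisely the coordinate of $c^D\tilde c^D$, and the factor $g_k/(g_ig_j)$ reproduces $e^{M_\Delta}(g)$; the substance of the statement is then that $\ric$ remains supported on the pairs $(a,\sigma_a)$.

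That last step is the one you leave as an unexecuted promise, and your description of it slightly misplaces where the work is done. For nilpotent $\g$ there are only two quadratic terms in $\ric$ and no mixed cross terms to cancel. For the term $\tfrac14\sum g^{ik}g^{jl}\langle[e_i,e_j],e_a\rangle\langle[e_k,e_l],e_b\rangle$ the vanishing for $b\neq\sigma_a$ uses no combinatorics at all: $\langle[e_i,e_j],e_a\rangle\neq0$ forces $[e_i,e_j]$ to be a multiple of $e_{\sigma_a}$, the diagram-involution hypothesis forces $[e_{\sigma_i},e_{\sigma_j}]$ to be a multiple of $e_a$, and $\langle e_a,e_b\rangle=0$ unless $b=\sigma_a$. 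For the term $-\tfrac12\sum g^{ik}\langle[e_a,e_i],[e_b,e_k]\rangle$ one is led to a pair of arrows $a\xrightarrow{i}m$ and $b\xrightarrow{\sigma_i}\sigma_m$; applying $\sigma$ to the first produces two arrows with the same label and destination, and \ref{enum:condNice2} forces $b=\sigma_a$. Conditions \ref{enum:condNice3}--\ref{enum:condNice4}, which you invoke, play no role here (they guarantee consistency of the diagram with the Jacobi identity, not diagonality of $\ric$). So: correct strategy and correct identification of $X$, but as written this is an outline rather than a proof --- the diagonality argument, which is the entire content of the proposition beyond bookkeeping, is asserted rather than carried out, even though it is shorter than you anticipate.
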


\begin{theorem}
\label{thm:sigmacompatible}
Let $\g$ be a nice Lie algebra with diagram $\Delta$ and structure constants $c\in V_\Delta$. Let $\sigma$ be a diagram involution. Then $\g$ has a $\sigma$-diagonal metric $g$ such that $\logsign g=\delta$ and $\Ric=\frac12k\id$ if and only if for some $\sigma$-invariant $X\in \R^m$:
\begin{description}
\item[($\mathbf{K}$)] $\tran {M_\Delta} X=[k]$.
\item[($\mathbf{H}$)] $X$ does not belong to any coordinate hyperplane;
\item[($\mathbf{L}_\sigma$)\namedlabel{enum:condLsigma}{($\mathbf{L}_\sigma$)}] $\logsign X+ \logsign c+\logsign \tilde c =M_{\Delta,2}\delta$;
\item[($\mathbf{P}_\sigma$)\namedlabel{enum:condPsigma}{($\mathbf{P}_\sigma$)}] for a basis $\alpha_1,\dotsc, \alpha_k$ of $(\ker \tran{M_\Delta})^\sigma$, we have
\[\abs{X}^{\alpha_i}=\abs{c}^{2\alpha_i}, \quad i=1,\dotsc, k.\]
\end{description}
\end{theorem}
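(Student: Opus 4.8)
The plan is to run the proof of Theorem~\ref{thm:diagonal} again, carrying along at every step the extra requirement that the metric, and with it the auxiliary vector $v$, be $\sigma$-invariant. First I would write a $\sigma$-diagonal metric as $g=(-1)^\delta\exp v$ for a $\sigma$-invariant $g\in D_n$; the uniqueness of the decomposition into sign and positive part forces both $\delta\in((\Z_2)^n)^\sigma$ and $v\in d_n$ to be $\sigma$-invariant. Since $e^{M_\Delta}(g)=(-1)^{M_{\Delta,2}(\delta)}\exp(M_\Delta(v)^D)$ exactly as before, Proposition~\ref{prop:ricciSigma} recasts $\Ric=\frac12k\id$ as Condition~($\mathbf{K}$) together with
\[X^D=(-1)^{M_{\Delta,2}(\delta)}\exp(M_\Delta(v)^D)\,c^D\tilde c^D.\]
Separating this into signs and absolute values, the sign part is $\logsign X+\logsign c+\logsign\tilde c=M_{\Delta,2}\delta$, which is \ref{enum:condLsigma}, while the componentwise logarithm of the absolute values reads
\[\log\abs{X}-\log\abs{c}-\log\abs{\tilde c}=M_\Delta(v);\]
Condition~($\mathbf{H}$) is precisely what makes these logarithms legitimate and, in the converse direction, lets me recover an invertible $g$.

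The one genuinely new point is to decide when this last equation admits a $\sigma$-invariant solution $v$. Here I would use the commutative square in the text, which says that $M_\Delta$ is $\sigma$-equivariant; since $\sigma$ is a symmetric involution, $\R^n$ and $\R^m$ split orthogonally into $(\pm1)$-eigenspaces and $M_\Delta$ preserves the splitting. This yields $M_\Delta((\R^n)^\sigma)=\im M_\Delta\cap(\R^m)^\sigma$: an element of the right-hand side is a sum of a plus- and a minus-image and, being $\sigma$-invariant, equals its plus-part, which lies in $M_\Delta((\R^n)^\sigma)$. A direct check shows that $w=\log\abs{X}-\log\abs{c}-\log\abs{\tilde c}$ is $\sigma$-invariant — $\log\abs{X}$ because $X$ is, and $\log\abs{c}+\log\abs{\tilde c}$ because $\tilde c_I=c_{\sigma(I)}$ — so a $\sigma$-invariant $v$ exists if and only if $w\in\im M_\Delta$.

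It then remains to convert $w\in\im M_\Delta=\bigcap_{\alpha\in\ker\tran{M_\Delta}}\ker\alpha$ into the finitely many equations of \ref{enum:condPsigma}, and two orthogonality remarks do this. Since $\tran{M_\Delta}$ is $\sigma$-equivariant as well, $\ker\tran{M_\Delta}$ is $\sigma$-invariant; its $(-1)$-eigenvectors pair to zero against the $\sigma$-invariant $w$, so it suffices to impose $\alpha(w)=0$ only for a basis $\alpha_1,\dots,\alpha_k$ of $(\ker\tran{M_\Delta})^\sigma$. For such $\sigma$-invariant $\alpha_i$, the $\sigma$-anti-invariant vector $\log\abs{\tilde c}-\log\abs{c}$ also pairs to zero, whence $\alpha_i(w)=\alpha_i(\log\abs{X}-2\log\abs{c})$, and $\alpha_i(w)=0$ is exactly $\abs{X}^{\alpha_i}=\abs{c}^{2\alpha_i}$. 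The converse reverses these steps: an $X$ meeting ($\mathbf{K}$), ($\mathbf{H}$), \ref{enum:condLsigma} and \ref{enum:condPsigma} reconstructs a $\sigma$-invariant $v$, hence a $\sigma$-diagonal metric with $\logsign g=\delta$. I expect the only real pitfall to be keeping the two eigenspace pairings straight; conceptually everything rests on the $\sigma$-equivariance of $M_\Delta$ and the resulting compatibility between its image and the $\sigma$-invariants, with no heavy computation involved.
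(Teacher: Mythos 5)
Your proposal is correct and follows essentially the same route as the paper: write $g=(-1)^\delta\exp v$ with $\delta,v$ $\sigma$-invariant, reduce via Proposition~\ref{prop:ricciSigma} to the sign condition \ref{enum:condLsigma} and the log-linear equation $\log\abs{X}-\log\abs{c}-\log\abs{\tilde c}=M_\Delta(v)$, and test solvability against $(\ker\tran{M_\Delta})^\sigma$ using the $\pm1$-eigenspace decomposition and the identity $\alpha(\log\abs{\tilde c})=\alpha(\log\abs{c})$ for $\sigma$-invariant $\alpha$. The only cosmetic difference is that you establish $M_\Delta((\R^n)^\sigma)=\im M_\Delta\cap(\R^m)^\sigma$ directly from $\sigma$-equivariance, whereas the paper obtains a $\sigma$-invariant $v$ by averaging $v\mapsto\frac12(v+\sigma(v))$; these are the same argument.
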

\begin{proof}
Following the proof of Theorem~\ref{thm:diagonal}, a $\sigma$-diagonal metric has the form $g=(-1)^\delta \exp v$, where both $\delta$ and $v$ are $\sigma$-invariant. The metric $g$  has signature $\delta$; it solves $\Ric=\frac12k\id$ if 
\[\tran{M_\Delta} X=[k], \quad X^D=(-1)^{M_{\Delta,2}(\delta)}\exp (M_\Delta(v)^D)c^D\tilde c^D;\]
equivalently, $X$ is characterized by
\[\logsign X=M_{\Delta,2}\delta+\logsign c+\logsign\tilde c, \quad \abs{X}^D=\exp (M_\Delta(v)^D)c^D\tilde c^D;\]
notice that $X$ is $\sigma$-invariant, because so are $c$, $\tilde c$ and $g$. Taking componentwise logarithms, we find
\begin{equation}
\label{eqn:XccTildav}
\log \abs{X}- \log\abs{c}-\log\abs{\tilde c} = M_\Delta(v).
\end{equation}
Thus, the existence of a $\sigma$-diagonal metric with signature $\delta$ and $\Ric=\frac12k\id$ is equivalent to the existence of a vector $X$ satisfying conditions~\ref{enum:condK},~\ref{enum:condH},~\ref{enum:condLsigma} and \eqref{eqn:XccTildav} for some $\sigma$-invariant $v\in d_n$. For fixed $X$, Equation~\eqref{eqn:XccTildav} has a solution in $v$ if and only if the left-hand side is orthogonal to $\ker \tran M_\Delta$; such a solution can always be assumed to be $\sigma$-invariant up to replacing $v$ with $\frac12(v+\sigma(v))$.

Since $\sigma$ is symmetric of order two, we have an orthogonal decomposition
\[\R^m=V_+\oplus V_-, \quad \sigma|_{V_\pm}=\pm\id|_{V_\pm}.\]
As $\ker \tran M_\Delta$ is $\sigma$-invariant, a vector in $V_+$ is orthogonal to $\ker \tran M_\Delta$ if and only if it is orthogonal to $\ker \tran M_\Delta\cap V_+ = (\ker \tran M_\Delta)^\sigma$.
Therefore, Equation~\eqref{eqn:XccTildav} has a solution in $v$ if and only if, for every $\alpha$ in $(\ker \tran M_\Delta)^\sigma$,
\[\alpha(\log \abs{X})
=\alpha(\log \abs{c})+\alpha(\log \abs{\tilde c})
=\alpha(\log \abs{c})+\alpha(\log \abs{\sigma(c)})
=2\alpha(\log \abs{c}),\]
where we have used $\abs{\tilde\sigma (c)}=\abs{\sigma(c)}$ and $\sigma$-invariance of $\alpha$. Last equation is equivalent to $\abs{X}^{\alpha}=\abs{c}^{2\alpha}$.
\end{proof}
\begin{example}
\label{example:2216}
To illustrate the case $k=0$, take the Lie algebra%731:15#2216
\[ \texttt{731:15}\quad(0,0,0,0,e^{12},e^{13},e^{26}+e^{35}).\]
Then $\sigma=(23)(56)$ is an automorphism, acting on $V_\Delta\cong \R^4$ as  $(1 2)(3 4)$. The kernel of $\tran{M_\Delta}$ is generated by $(1,-1,-1,1)$, so it does not contain any non-trivial $\sigma$-invariant element. Therefore, this nice Lie algebra does not admit any $\sigma$-diagonal Ricci-flat metric.
\end{example}

Of special interest is the case of permutations $\sigma\in\Aut(\Delta)$ with no fixed points, corresponding to linear isomorphisms $\sigma\colon\R^n\to\R^n$ that do not fix any element in the nice basis. Then $n$ is even and a $\sigma$-diagonal metric has the form
\[g=\left(\begin{array}{cc|c|cc}
   0&1&& \\
   1& 0&& \\
   \hline
    & &  \ddots \\
    \hline
   &&& 0&1 \\   
   &&&1& 0 \\   
  \end{array}\right);\]
consequently, it has neutral signature. 

\begin{remark}\label{rem:ParaComplexSigmaNotIntegrable}
Recall that an almost paracomplex structure on a manifold $M$ is an endomorphism $K\colon TM\to TM$ such that $K^2=\id$ and the $\pm1$-eigendistributions have the same rank. A neutral metric $g$ is compatible with $K$ if $g(K\cdot,K\cdot)=-g$ (see \cite{RossiPhD} and the references therein); in this case $(K,g)$ is called an almost parahermitian structure.

A fixed-point-free permutation $\sigma\in\Aut(\Delta)$ defines an almost paracomplex structure which is not compatible in this sense with the $\sigma$-diagonal metric $g$, because the $\sigma$-invariant vectors are not null vectors for the metric $g$.  We observe that this almost paracomplex structure is not integrable (i.e. the eigendistributions are not involutive), unless the Lie algebra is abelian. Assume that $\g$ is not abelian, and suppose that $[e_i,e_j]=ae_k$ for some nonzero constant $a$. Since $\sigma$ is an automorphism, $[e_{\sigma_i},e_{\sigma_j}]=be_{\sigma_k}$ for some nonzero constant $b$. Write $f_i^\pm=e_i\pm e_{\sigma_i}$, so that $f_i^\pm$ is in the $\pm1$-eigenspace of $\sigma$. Then
\[
[f_i^+,f_j^+]=ae_k+be_{\sigma_k} + v,\qquad
[f_i^-,f_j^-]=ae_k+be_{\sigma_k} -v,
\]
where by the nice conditions $v$ lies in the span of the basis elements that differ from both $e_k$ and $e_{\sigma_k}$. Since the nonzero vector $ae_k+be_{\sigma_k}$ cannot be in both eigenspaces, at least one of the eigendistributions is not involutive. 
\end{remark}

\begin{remark}\label{rem:SigmaMetricParahermitian}
By contrast, given an automorphism $\sigma$ with no fixed point, it is possible to construct an almost parahermitian structure $(K,g)$ for any  $\sigma$-diagonal metric $g$. In fact, partition the nice basis as $\B=\B^+\cup \sigma(\B^+)$ and define an almost paracomplex structure $K$ such that $K|_{\mathcal{B}^+}=\id$ and $K|_{\sigma(\mathcal{B}^+)}=-\id$; the $\sigma$-diagonal metric is an almost parahermitian metric compatible with $K$.  We observe that, in general, the almost paracomplex structure is not integrable. These types of metrics fit into the framework of \cite{ContiRossi:ricci}; their Ricci tensor can therefore be related to the intrinsic torsion of the almost parahermitian structure  (see Example~\ref{example:ParaComplexRicciFlat}).
\end{remark}

\section{The case of nonzero scalar curvature}
\label{section:nonzero}
In this section we review our results concerning Einstein metrics of nonzero scalar curvature $s$ (namely, solutions of \eqref{eqn:einstein} with $\lambda\neq0$); the results stated here are contained in \cite{ContiRossi:Construction,ContiRossi:EinsteinNice,ContiRossi:EinsteinNilpotent}.

The existence of an Einstein metric with nonzero scalar curvature puts strong constraints on the Lie algebra, even outside of the nice context. In fact, the following holds:
\begin{theorem}[\cite{ContiRossi:EinsteinNilpotent}]
\label{th:OstruzioneEinstein}
Nilpotent Lie algebras admitting a derivation with nonzero trace do not carry Einstein metrics with $s\neq0$.
\end{theorem}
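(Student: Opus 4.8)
The plan is to reduce the statement to the orthogonality identity
\[\Tr(\Ric\, D)=0,\]
valid for every derivation $D$ of a metric nilpotent Lie algebra, where $\Ric$ denotes the Ricci operator. Granting this, suppose $g$ is an Einstein metric, so that the Ricci operator equals $\lambda\id$ for some $\lambda\in\R$; tracing gives the scalar curvature $s=\lambda\dim\g$, so that $s\neq0$ is equivalent to $\lambda\neq0$. If $D$ is a derivation with $\Tr D\neq0$, then $\Tr(\Ric\, D)=\lambda\,\Tr D$, which by the identity must vanish; hence $\lambda=0$ and $s=0$. This contradicts $s\neq0$, so no Einstein metric of nonzero scalar curvature can exist.

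It then remains to establish the orthogonality identity. Following the parametrization described after \cite{Heber:noncompact}, I would fix the underlying vector space $\g\cong\R^n$ together with the scalar product $g$, and regard the Lie bracket as a point $\mu\in\Lambda^2\g^*\otimes\g$. The group $\GL(\g)$ acts by $(a\cdot\mu)(x,y)=a\,\mu(a^{-1}x,a^{-1}y)$, with stabilizer at $\mu$ the automorphism group $\Aut(\g)$, so that the infinitesimal action of $A\in\gl(\g)$ is
\[(A\cdot\mu)(x,y)=A\,\mu(x,y)-\mu(Ax,y)-\mu(x,Ay),\]
which vanishes precisely when $A$ is a derivation. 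The content of the moment-map description of the Ricci operator (in the sense of \cite{Lauret} and \cite{ContiRossi:EinsteinNilpotent}) is that, for every $A\in\gl(\g)$,
\[\Tr(\Ric\, A)=c\,\langle A\cdot\mu,\mu\rangle,\]
with $c$ a nonzero universal constant and the pairing induced by $g$ on $\Lambda^2\g^*\otimes\g$. Taking $A=D$ a derivation gives $D\cdot\mu=0$, whence $\Tr(\Ric\, D)=0$.

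The main obstacle is justifying the moment-map formula in arbitrary signature. The general expression for the Ricci operator of a left-invariant metric contains, besides the quadratic term in the structure constants that reproduces $\langle A\cdot\mu,\mu\rangle$, additional terms built from the traces of the operators $\ad_{e_i}$ and from the mean-curvature vector. Here nilpotency is essential: each $\ad_{e_i}$ is nilpotent, hence traceless, and a nilpotent Lie algebra is unimodular, so these correction terms drop out and the Ricci operator coincides with the moment map. Care is also needed because in indefinite signature the pairing is itself indefinite and a pseudo-orthonormal frame carries signs $\epsilon_i$; one checks that the symplectic moment-map interpretation of \cite{ContiRossi:EinsteinNilpotent} survives these changes, the only effect being on the pairing and not on the vanishing $D\cdot\mu=0$. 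Finally, since $\Ric$ is $g$-self-adjoint one has $\Tr(\Ric\, D)=\Tr(\Ric\, D_{\mathrm{sym}})$, so no difficulty arises from $D$ failing to be self-adjoint.
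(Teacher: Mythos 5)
Your proposal is correct and follows essentially the same route as the cited source \cite{ContiRossi:EinsteinNilpotent} (the present paper only states the theorem with a reference): the key identity $\Tr(\Ric D)=0$ for derivations of a metric nilpotent Lie algebra, obtained from the moment-map expression of the Ricci operator, with nilpotency guaranteeing unimodularity and the vanishing of the Killing form so that the correction terms disappear in any signature. Tracing $\Ric=\lambda\id$ against a derivation of nonzero trace then forces $\lambda=0$, exactly as in the original argument.
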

For nice Lie algebras, this obstruction only depends on the diagram, or equivalently the root matrix $M_\Delta$ (see \cite[Lemma 2.1]{ContiRossi:EinsteinNice}). Linear case-by-case computations lead to the following:
\begin{corollary}[\cite{ContiRossi:EinsteinNilpotent}]
\label{co:niceEinsteinDimBassa}
If $\g$ is either:
\begin{itemize}
\item a nilpotent Lie algebra of dimension $\leq 6$; or
\item a nice nilpotent Lie algebra of dimension $7$,
\end{itemize}
then $\g$ admits no Einstein metric with $s\neq0$. 
\end{corollary}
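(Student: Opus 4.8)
The plan is to reduce everything to Theorem~\ref{th:OstruzioneEinstein}: since a nilpotent Lie algebra carrying a derivation of nonzero trace admits no Einstein metric with $s\neq0$, it suffices to exhibit, for each Lie algebra in the two classes, a single derivation $D$ with $\Tr D\neq0$. The useful reformulation is that in the nice setting this is a pure linear-algebra condition. A diagonal endomorphism $\diag(v_1,\dots,v_n)$ is a derivation exactly when $v_k=v_i+v_j$ for every arrow $i\xrightarrow{j}k$, i.e. when $v\in\ker M_\Delta$, and its trace is the sum of entries $\sum_i v_i$. Hence a diagonal derivation of nonzero trace exists if and only if $[1]\notin(\ker M_\Delta)^\perp=\im\tran{M_\Delta}$, that is, if and only if the system $\tran{M_\Delta}X=[1]$ has no solution --- precisely condition \ref{enum:condK} of Theorem~\ref{thm:diagonal} with $k=1$, as already used in Example~\ref{es:754321:9}. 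By \cite[Lemma 2.1]{ContiRossi:EinsteinNice} the obstruction of Theorem~\ref{th:OstruzioneEinstein} depends only on $M_\Delta$ in the nice case, so for nice algebras it is enough to test this single linear system.

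For the first bullet I would argue uniformly. A nilpotent Lie algebra admits a derivation of nonzero trace if and only if it is \emph{not} characteristically nilpotent (not all of its derivations are nilpotent): one direction is clear since nilpotent operators are traceless, and for the converse one takes the pre-Einstein derivation $\phi$ of \cite{Nikolayevsky}, semisimple with real spectrum and satisfying $\Tr(\phi\psi)=\Tr\psi$ for every derivation $\psi$; setting $\psi=\phi$ gives $\Tr\phi=\Tr(\phi^2)=\sum_i\lambda_i^2$, which is positive whenever $\phi\neq0$, whereas $\phi=0$ forces every derivation to be nilpotent. Since it is classical that characteristically nilpotent Lie algebras first occur in dimension $7$, every nilpotent Lie algebra of dimension $\leq6$ --- nice or not --- admits a derivation of positive trace, and the first bullet follows. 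Equivalently, one runs through the classification in dimension $\leq6$ and produces an explicit $v\in\ker M_\Delta$ with $\sum_i v_i\neq0$ in each case.

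For the second bullet the dimensional argument is unavailable, since characteristically nilpotent Lie algebras do exist in dimension $7$; the content is that none of them is nice. I would therefore run the finite check over the classified dimension-$7$ nice diagrams of Theorem~\ref{thm:classification_nice_7}: for each diagram one verifies that $\tran{M_\Delta}X=[1]$ is insolvable, i.e. that $\ker M_\Delta$ contains a vector whose entries do not sum to zero. Because $M_\Delta$ records only which brackets are nonzero and not the values of the structure constants, the root matrix is constant on each parameter range in which a family admits a fixed nice diagram, so every continuous family reduces to finitely many such linear checks and the parameter $\lambda$ plays no role.

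The content is thus entirely a finite, diagram-by-diagram solvability computation, and the main obstacle is organizational rather than conceptual: guaranteeing completeness of the dimension-$\leq6$ and dimension-$7$ nice case lists and the correctness of each rank check. The one genuinely non-formal input is the passage from ``possesses a non-nilpotent derivation'' to ``possesses a nonzero-trace derivation,'' which is exactly what the pre-Einstein derivation supplies through the identity $\Tr\phi=\Tr(\phi^2)$; this is also what ensures that the criterion $\tran{M_\Delta}X=[1]$ faithfully detects the obstruction of Theorem~\ref{th:OstruzioneEinstein}.
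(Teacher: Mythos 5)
Your overall strategy coincides with the paper's: the corollary is obtained from Theorem~\ref{th:OstruzioneEinstein} together with case-by-case linear computations, and for nice algebras the reduction of ``derivation of nonzero trace'' to the solvability of $\tran{M_\Delta}X=[1]$ (via \cite[Lemma 2.1]{ContiRossi:EinsteinNice}) is exactly the right mechanism; your treatment of the second bullet, including the observation that the root matrix is independent of the parameters in a family, is sound. The genuine gap is in your ``uniform'' argument for the first bullet. You claim that a nilpotent Lie algebra admits a derivation of nonzero trace if and only if it is not characteristically nilpotent, deducing the nontrivial direction from ``$\phi=0$ forces every derivation to be nilpotent.'' That implication is false: $\phi=0$ only says that \emph{every} derivation is traceless, and a traceless semisimple derivation need not vanish. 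The paper itself supplies a counterexample: \texttt{9521:70a} has injective root matrix, hence (by the very Lemma 2.1 you invoke) no derivation of nonzero trace and $\phi=0$, yet it is explicitly stated not to be characteristically nilpotent. So Favre's theorem that characteristically nilpotent algebras first occur in dimension $7$ does not by itself yield a nonzero-trace derivation in dimension $\leq 6$; you need the stronger (true, but classification-based) fact that every nilpotent Lie algebra of dimension $\leq6$ actually carries such a derivation --- e.g.\ because each admits a positive grading, or by direct inspection.

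Your fallback for the first bullet --- exhibiting $v\in\ker M_\Delta$ with $\sum_i v_i\neq0$ for each algebra in the classification --- is the paper's actual route, but as written it only covers \emph{nice} algebras, whereas the first bullet concerns \emph{all} nilpotent Lie algebras of dimension $\leq6$; by Theorem~\ref{thm:classification_nice_6} one six-dimensional algebra has no nice basis, and for it (as well as in principle for any non-nice algebra in lower dimension) you must produce a nonzero-trace derivation directly rather than through a root matrix. With these two repairs --- replacing the characteristic-nilpotency equivalence by an honest check (or by positive gradability) and covering the non-nice six-dimensional algebra --- the argument matches the paper's.
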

More precisely, from Theorem \ref{th:OstruzioneEinstein} it follows that at most $11$ nilpotent Lie algebras of dimension $7$ (none of which are nice) can admit an Einstein metric with $s\neq0$. This result makes it natural to ask:\footnote{After a first draft of this paper appeared online, an example providing a positive answer to this question was found in \cite{FernandezFreibertSanchez}. To our knowledge, the complete classification of nilpotent Lie algebras of dimension $7$ admitting an Einstein metric with $s\neq0$ remains an open problem.}

\begin{question}
Are there any nilpotent Lie algebras of dimension $7$ with an Einstein metric of nonzero scalar curvature?\end{question}

The difficulty of answering this question is that the Ricci tensor of a general $7$-dimensional metric has $\binom{8}{2}$ components, as opposed to the metrics considered in Section~\ref{sec:einsteinonnice}, that have at most $7$ nonzero components. Even if we restrict to diagonal or $\sigma$-diagonal metrics on nice Lie algebras, however, determining the existence of an Einstein metric generally requires solving nonlinear equations equivalent to \ref{enum:condP} and~\ref{enum:condPsigma}.

In dimension $8$, there are infinitely many inequivalent nice nilpotent Lie algebras; more precisely, there are $45$ continuous families and $872$ isolated nice Lie algebras  (see \cite{ContiRossi:Construction}). However, only few of them admit diagonal or $\sigma$-diagonal Einstein metrics with $s\neq0$:
\begin{theorem}[\protect{\cite[Theorem 4.4]{ContiRossi:EinsteinNice}}]
\label{thm:EinsteinMetrics8}
Among 8-dimensional nice nilpotent Lie algebras, up to equivalence:
\begin{itemize}
\item exactly 6 admit a diagonal Einstein metric with $s\neq0$;
\item exactly 4 admit a $\sigma$-diagonal Einstein metric with $s\neq0$ for some diagram involution $\sigma$.
\end{itemize}
\end{theorem}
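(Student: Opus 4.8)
The statement is a finite classification, so the plan is to combine the exact existence criteria of Theorem~\ref{thm:diagonal} and Theorem~\ref{thm:sigmacompatible} with the classification of $8$-dimensional nice nilpotent Lie algebras recalled in Section~1 (the $45$ continuous families and $872$ isolated algebras produced by the software DEMONbLAST). For each diagram $\Delta$ one reads off the root matrix $M_\Delta$ and its reduction $M_{\Delta,2}$, and then asks, for $k\neq0$, whether there is a vector $X\in\R^m$ satisfying Conditions~\ref{enum:condK},~\ref{enum:condH},~\ref{enum:condL} and~\ref{enum:condP} (respectively~\ref{enum:condK},~\ref{enum:condH},~\ref{enum:condLsigma},~\ref{enum:condPsigma} for a diagram involution $\sigma$). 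Since all four conditions are invariant under the $\Aut(\Delta)\ltimes D_n$-action, it suffices to run the test once per equivalence class, reducing the whole problem to a finite, if large, case-by-case computation.

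The first and cheapest filter is purely linear and removes the overwhelming majority of candidates. Condition~\ref{enum:condK} with $k\neq0$ can be met only if $[1]\in\im\tran M_\Delta=(\ker M_\Delta)^\perp$, which says precisely that every $v\in\ker M_\Delta$ satisfies $\sum_i v_i=0$, i.e. that no diagonal derivation $v^D$ has nonzero trace. This is exactly the obstruction of Theorem~\ref{th:OstruzioneEinstein} read off the root matrix, and it amounts to a single rank computation. I would first compute $\ker M_\Delta$ and test whether $[1]$ lies in the row space of $M_\Delta$, discarding every diagram that fails; for the continuous families this test is uniform in the structure parameter, since $M_\Delta$ depends only on $\Delta$.

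For the surviving diagrams the analysis becomes nonlinear, and this is where I expect the main obstacle. Fixing one solution of $\tran M_\Delta X=[k]$ (determined up to the $\kappa$-dimensional space $\ker\tran M_\Delta$), Condition~\ref{enum:condP} is a system of $\kappa$ monomial equations $\abs{X}^{\alpha_i}=\abs{c}^{2\alpha_i}$ in the $\kappa$ remaining free entries of $X$. For the isolated algebras $c$ is a fixed numerical vector, but for the $45$ families $c$ depends on $\lambda$, so the system must be solved parametrically, tracking for which $\lambda$ a genuine solution off all coordinate hyperplanes exists. Once a candidate $X$ is found I would verify Condition~\ref{enum:condH} and then Condition~\ref{enum:condL}, i.e. check that $\logsign X$ actually lies in $\im M_{\Delta,2}$ so that a compatible signature $\delta$ exists; as Example~\ref{es:754321:9} illustrates, it is typical for the modulus equations to be solvable while the sign constraint fails, and this is what eliminates most of the remaining cases. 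The $\sigma$-diagonal count is obtained by repeating the procedure with Theorem~\ref{thm:sigmacompatible}: for each diagram I would enumerate the diagram involutions $\sigma\in\Aut(\Delta)$, replace $\ker\tran M_\Delta$ by its fixed subspace $(\ker\tran M_\Delta)^\sigma$ in Condition~\ref{enum:condPsigma}, and use~\ref{enum:condLsigma} in place of~\ref{enum:condL}.

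Finally, since the underlying list is already a classification up to equivalence and the four conditions are equivalence-invariant, no separate inequivalence argument is needed: one simply tallies the diagrams that pass all tests (in the $\sigma$-case, the algebras admitting a metric for at least one involution), obtaining the asserted $6$ and $4$. The genuinely delicate points, which I would delegate to a computer algebra system, are the parametric solution of the monomial systems over the continuous families and the sign bookkeeping of Conditions~\ref{enum:condL} and~\ref{enum:condLsigma}, both of which are sensitive to the signs of the entries and to degenerate values of $\lambda$.
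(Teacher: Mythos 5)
Your proposal is correct and follows essentially the same route as the paper: the result is quoted from \cite{ContiRossi:EinsteinNice}, where it is obtained by exactly this kind of case-by-case computation over the classification of $8$-dimensional nice nilpotent Lie algebras, filtering first by the linear root-matrix obstruction and then solving the polynomial and sign conditions by computer. The only (harmless) difference is that you invoke the refined criteria of Theorems~\ref{thm:diagonal} and~\ref{thm:sigmacompatible}, whereas the original proof used the earlier formulation \cite[Theorem 2.3, Corollary 2.6]{ContiRossi:EinsteinNice} with $n$ equations in $n$ unknowns; this is an optimization of the same method, not a different argument.
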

\begin{remark}
The nice Lie algebras of Theorem~\ref{thm:EinsteinMetrics8} are listed in Table~\ref{table:8DimNiceEinsteinDiagonal}. We note that \texttt{842:117} and \texttt{842:121b} are isomorphic via
\begin{align*}
e^1&\mapsto e^1-e^4,&\quad e^2&\mapsto -e^2-e^3,&\quad e^3&\mapsto e^1-e^4,&\quad e^4&\mapsto e^2-e^3,\\
e^5&\mapsto -e^5-e^6,&\quad e^6&\mapsto e^5-e^6,&\quad e^7&\mapsto 2e^8,&\quad e^8&\mapsto 2e^7.
\end{align*}

With this exception, the Lie algebras of Table~\ref{table:8DimNiceEinsteinDiagonal} are pairwise nonisomorphic, as one can check by taking the quotient by a line in the center and using the classification of \cite{Gong}.
\end{remark}

\begin{table}[thp]
\centering
{\setlength{\tabcolsep}{2pt} 
\caption{8-dimensional nice Lie algebras with a non-Ricci-flat, Einstein metric of diagonal or $\sigma$-diagonal type\label{table:8DimNiceEinsteinDiagonal}}
%\begin{footnotesize}
\begin{tabular}{>{\ttfamily}c C C C}
\toprule
\textnormal{Name} & \g & \text{diag.}&\text{$\sigma$-comp.}\\
\midrule
86532:6 & \multicolumn{1}{L}{0,0,e^{12},-e^{13},e^{23},-e^{15}+e^{24},} & \checkmark & \checkmark  \\ %86532:6#1174033 OK
 & \multicolumn{1}{R}{e^{16}+e^{25}+e^{34},e^{14}+e^{26}+e^{35}} & & \\[5pt]
8531:60a & 0,0,0,e^{12},e^{13},e^{24},e^{15}+e^{23},e^{14}+e^{26}+e^{35} & \checkmark&\\[5pt] %8531:60#41529 OK
8531:60b & 0,0,0,-e^{12},e^{13},e^{24},e^{15}+e^{23},e^{14}+e^{26}+e^{35} &\checkmark &\\[5pt] %8531:60#41529 OK
842:117  & 0,0,0,0,e^{12},e^{34},e^{15}+e^{24}+e^{36},e^{13}+e^{25}+e^{46}&\checkmark &\checkmark \\[5pt] %842:117#2926 OK
842:121a & \multicolumn{1}{L}{0,0,0,0,e^{13}-e^{24},e^{12}+e^{34},}& \checkmark & \checkmark\\%842:121#3004 OK
 & \multicolumn{1}{R}{e^{14}+e^{25}+e^{36},e^{15}+e^{23}+e^{46}} & & \\[5pt]
842:121b & \multicolumn{1}{L}{0,0,0,0,-e^{13}+e^{24},-e^{12}+e^{34},} &\checkmark & \checkmark\\%842:121#3004 OK
 & \multicolumn{1}{R}{e^{14}+e^{25}+e^{36},e^{15}+e^{23}+e^{46}} & & \\
\bottomrule
\end{tabular}
%\end{footnotesize}
}
\end{table}
It is  striking that the Lie algebras appearing in Table~\ref{table:8DimNiceEinsteinDiagonal} are precisely the $8$-dimensional nice Lie algebras that are \emph{characteristically nilpotent}; this condition means that all derivations are nilpotent (see \cite{AncocheaCampoamor} and the references therein). In particular, characteristically nilpotent Lie algebras trivially satisfy the obstruction of Theorem~\ref{th:OstruzioneEinstein}. In the nice context, the set of derivations diagonalized by the nice basis corresponds to the kernel of the root matrix; thus, the characteristically nilpotent condition implies that the root matrix is injective. However, the two conditions are not equivalent, as can be seen by considering the nice nilpotent Lie algebra
\[\texttt{9521:70a}\quad (0,0,0,0,e^{12},e^{14}+e^{23},e^{13}+e^{24},e^{15},e^{18}+e^{25}+e^{34}),\]
which has injective root matrix, but is not characteristically nilpotent. 

Notice that characteristically nilpotent Lie algebras only exist in dimension $7$ and greater (see \cite{Favre}); nice characteristically nilpotent Lie algebras, by contrast,
have dimension at least $8$, as one can verify using the classification. The above observations make it natural to ask:
\begin{question}
\label{question:characteristicallynilpotent}
Do all characteristically nilpotent Lie algebras admit an Einstein metric with $s\neq0$?
\end{question}

In dimension $9$ and higher, the polynomial equations become increasingly difficult to solve. However, we can use a simpler sufficient condition for the existence of an Einstein metric that only depends on linear computations:
\begin{theorem}[\protect{\cite[Theorem 2.9]{ContiRossi:EinsteinNice}}]\label{thm:Sufficient_Condition}
Let $\g$ be a nice nilpotent Lie algebra with diagram $\Delta$; if $M_{\Delta,2}$ is surjective and $X$ is a vector satisfying Conditions~\ref{enum:condK} and~\ref{enum:condH} in Theorem~\ref{thm:diagonal}, then there exists a diagonal Einstein metric with $s\neq0$.
\end{theorem}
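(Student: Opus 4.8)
The plan is to read the statement as a direct application of Theorem~\ref{thm:diagonal}. We are already handed a vector $X$ satisfying \ref{enum:condK} and \ref{enum:condH}, so it suffices to exhibit a signature $\delta$ for which \ref{enum:condL} and \ref{enum:condP} also hold, and then to check that the resulting cosmological constant $k$ is nonzero. The surjectivity of $M_{\Delta,2}$ will be used to dispatch \ref{enum:condL} and \ref{enum:condP} at once.

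First I would transfer the surjectivity of $M_{\Delta,2}$ to the real matrix $M_\Delta$. Because $M_\Delta$ has integer entries, any $\R$-linear dependence among its rows is defined over $\Q$, hence can be rescaled to a primitive integer relation $\sum_i a_i r_i=0$ with $\gcd(a_i)=1$; reducing modulo $2$ yields a nontrivial $\Z_2$-linear dependence among the rows of $M_{\Delta,2}$, since not all $a_i$ are even. Taking the contrapositive, surjectivity of $M_{\Delta,2}$ (equivalently, $\Z_2$-linear independence of its $m$ rows) forces the rows of $M_\Delta$ to be $\R$-linearly independent. Thus $M_\Delta\colon\R^n\to\R^m$ is surjective, and dually $\tran{M_\Delta}\colon\R^m\to\R^n$ is injective.

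The injectivity of $\tran{M_\Delta}$ is the crux. It gives $\ker\tran{M_\Delta}=0$, so the basis $\alpha_1,\dots,\alpha_k$ of $\ker\tran{M_\Delta}$ in Condition~\ref{enum:condP} is empty and \ref{enum:condP} holds vacuously. For \ref{enum:condL}, surjectivity of $M_{\Delta,2}$ lets me choose $\delta\in(\Z_2)^n$ with $M_{\Delta,2}\delta=\logsign X$, which is well defined thanks to \ref{enum:condH}. Hence $X$ together with this $\delta$ verifies all four hypotheses of Theorem~\ref{thm:diagonal}, producing a diagonal metric of signature $\delta$ with $\Ric=\tfrac12 k\id$.

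It remains to see $s\neq0$, i.e. $k\neq0$. If $k$ were zero, then \ref{enum:condK} would read $\tran{M_\Delta}X=0$, and the injectivity of $\tran{M_\Delta}$ would force $X=0$, contradicting \ref{enum:condH}; so $k\neq0$ and the scalar curvature $\Tr(\tfrac12 k\id)=\tfrac12 kn$ is nonzero. The only genuinely nontrivial point in this argument is the passage from $\Z_2$-linear independence to $\R$-linear independence of the rows of $M_\Delta$, which I expect to be the main obstacle to state cleanly; once that is in place, the collapse of \ref{enum:condP} and the solvability of \ref{enum:condL} are immediate, and the rest is a direct verification against Theorem~\ref{thm:diagonal}.
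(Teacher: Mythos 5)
Your proof is correct: the paper does not actually reprove this statement (it is imported verbatim from \cite[Theorem 2.9]{ContiRossi:EinsteinNice}), but your argument is exactly the natural deduction from Theorem~\ref{thm:diagonal} — the reduction of an $\R$-linear relation among the rows of $M_\Delta$ to a primitive integer relation and then mod $2$ correctly shows that surjectivity of $M_{\Delta,2}$ forces injectivity of $\tran{M_\Delta}$, which makes \ref{enum:condP} vacuous, while surjectivity of $M_{\Delta,2}$ solves \ref{enum:condL} for some $\delta$, and $k\neq0$ follows since $k=0$ together with $\ker\tran{M_\Delta}=0$ would contradict \ref{enum:condH}. This is also consistent with the paper's own remark at the start of Section~\ref{section:ricciflat} that \ref{enum:condK} and \ref{enum:condH} with $k=0$ can only hold when $M_\Delta$ is nonsurjective.
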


Restricting to the case where $M_\Delta$ is surjective, we obtain the following:
\begin{theorem}[\protect{\cite[Theorem 4.8]{ContiRossi:EinsteinNice}}]
\label{thm:EinsteinMetrics9}
Among 9-dimensional nice nilpotent Lie algebras with surjective root matrix, up to equivalence:
\begin{itemize}
\item exactly 48 admit a diagonal Einstein metric with $s\neq0$;
\item exactly 7 admit a $\sigma$-diagonal  Einstein metric with $s\neq0$ for some diagram involution $\sigma$.
\end{itemize}
\end{theorem}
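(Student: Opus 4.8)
The plan is to reduce the existence of a diagonal (resp.\ $\sigma$-diagonal) Einstein metric with $s\neq0$ to a finite collection of linear-algebra checks over $\R$ and $\Z_2$, and then run those checks over the classification of $9$-dimensional nice nilpotent Lie algebras with surjective root matrix from \cite{ContiRossi:Construction}. The starting observation is that surjectivity of $M_\Delta$ makes the whole problem rigid and combinatorial. Indeed, in the notation of Proposition~\ref{prop:fundamental_domain}, $\mathcal J_\Delta$ indexes a maximal set of $\R$-independent rows of $M_\Delta$, so $|\mathcal J_\Delta|=\operatorname{rank}M_\Delta=m$ and the fundamental domain $W$ has no free coordinates. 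Thus each such diagram carries only finitely many inequivalent nice Lie algebras, no continuous families occur under the surjectivity hypothesis, and the counts $48$ and $7$ are well defined.

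Next I would exploit that $M_\Delta$ surjective is equivalent to $\tran{M_\Delta}$ injective, whence $\ker\tran{M_\Delta}=0$. In Theorem~\ref{thm:diagonal} this makes Condition~\ref{enum:condP} vacuous, and it makes $X$ the unique solution of $\tran{M_\Delta}X=[k]$ once $k$ is fixed; in particular $X=kX_1$ for the unique $X_1$ with $\tran{M_\Delta}X_1=[1]$. A solution with $k\neq0$ exists at all iff $[1]\in\im\tran{M_\Delta}=(\ker M_\Delta)^\perp$, i.e.\ iff every diagonal derivation is traceless, which is exactly the absence of the obstruction of Theorem~\ref{th:OstruzioneEinstein} expressed on the root matrix. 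With $X$ so determined, Theorem~\ref{thm:diagonal} says a diagonal Einstein metric with $s\neq0$ exists iff (i) $[1]\in\im\tran{M_\Delta}$, (ii) $X_1$ has no zero entry (Condition~\ref{enum:condH}), and (iii) $\logsign X_1$ or its complement $\logsign(-X_1)$ lies in $\im M_{\Delta,2}$ (Condition~\ref{enum:condL}, the two sign choices corresponding to $k>0$ and $k<0$). All three are conditions on the diagram alone: (i),(ii) are linear algebra over $\R$ and (iii) is linear algebra over $\Z_2$; when $M_{\Delta,2}$ is surjective (iii) is automatic, recovering Theorem~\ref{thm:Sufficient_Condition}. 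Computing $X_1$ for each diagram in the list, testing (i)--(iii), and counting the inequivalent Lie algebras carried by the qualifying diagrams (existence being independent of the point of $W$) yields $48$.

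For the $\sigma$-diagonal statement I would run the parallel analysis with Theorem~\ref{thm:sigmacompatible}. Again $\ker\tran{M_\Delta}=0$ forces $(\ker\tran{M_\Delta})^\sigma=0$, so Condition~\ref{enum:condPsigma} is vacuous; moreover the unique $X$ solving $\tran{M_\Delta}X=[k]$ is automatically $\sigma$-invariant, since $\tran{M_\Delta}$ commutes with $\sigma$ (the commutative diagram preceding Proposition~\ref{prop:ricciSigma}) and $[k]$ is $\sigma$-invariant, so $\sigma X$ also solves and injectivity gives $\sigma X=X$. The only genuine difference is Condition~\ref{enum:condLsigma}, $\logsign X+\logsign c+\logsign\tilde c\in\im M_{\Delta,2}$, which now depends on the signs of $c$ and of $\tilde c=\tilde\sigma(c)$. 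Hence, for each diagram admitting a diagram involution $\sigma\in\Aut(\Delta)$ of order two, and for each of the finitely many representatives $c\in W$, I would test (i),(ii) together with this $\Z_2$-condition for both signs of $k$; tallying the Lie algebras for which some $\sigma$ works gives $7$.

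The bookkeeping to finish is to count everything up to equivalence: one quotients $W$ by the residual $\Aut(\Delta)$-action as in Proposition~\ref{prop:fundamental_domain}, and uses Remark~\ref{rem:ActionKerMDelta} to see that $X$ pins down the metric essentially uniquely, so distinct equivalence classes are not overcounted. The conceptual core of the argument, and what makes a definitive count possible, is the collapse of the nonlinear Conditions~\ref{enum:condP}/\ref{enum:condPsigma} under surjectivity of $M_\Delta$, turning existence into pure linear algebra. The main practical obstacle is the size of the classification: enumerating all $9$-node nice diagrams with surjective root matrix and carrying out the $\R$- and $\Z_2$-rank computations is only feasible by computer (the \texttt{DEMONbLAST} package), and the $\sigma$-diagonal case additionally requires looping over diagram involutions and over the finitely many sign patterns of $c$, which is where careful automation is needed to ensure that $48$ and $7$ are exact rather than merely lower bounds.
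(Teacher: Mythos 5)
Your proposal is correct and matches the route the paper (and the cited source) takes: surjectivity of $M_\Delta$ forces $\mathcal J_\Delta=\mathcal I_\Delta$ so no continuous families occur, kills Conditions~\ref{enum:condP} and~\ref{enum:condPsigma}, pins down $X$ uniquely (and $\sigma$-invariantly) from $\tran{M_\Delta}X=[k]$, and reduces existence to the linear checks \ref{enum:condK}, \ref{enum:condH}, \ref{enum:condL}/\ref{enum:condLsigma} run by computer over the classification of \cite{ContiRossi:Construction}. The only caveat, which you already acknowledge, is that the exact counts $48$ and $7$ rest on the automated case-by-case verification rather than on anything provable by hand.
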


\begin{remark}
\label{remark:nilmanifold}
All the Lie algebras appearing in Theorems~\ref{thm:EinsteinMetrics8} and \ref{thm:EinsteinMetrics9} have rational structure constants. Therefore, the associated Lie group $G$ has a lattice $\Gamma$, and the left-invariant Einstein metric on $G$ induces an Einstein metric on a nilmanifold, namely the compact quotient $\Gamma\backslash G$.
\end{remark}
\begin{remark}
In light of Question~\ref{question:characteristicallynilpotent}, we notice that among the Einstein Lie algebras appearing in Theorem~\ref{thm:EinsteinMetrics9} there are $44$ characteristically nilpotent Lie algebras.
\end{remark}

In the situation of Theorem~\ref{thm:EinsteinMetrics9}, there do not appear families of nice nilpotent Lie algebras; this is a general consequence of the surjectivity of $M_{\Delta}$. It would be interesting to investigate the general behaviour of families of nice nilpotent Lie algebras sharing the same diagram and root matrix. Indeed, \cite{Nikolayevsky} showed that the existence of a Riemannian nilsoliton metric on a nice nilpotent Lie algebra only depends on the root matrix  --- or, in our language, the diagram. Since nilsoliton metrics, like Einstein metrics, arise as critical points of the scalar curvature, it is natural to ask whether this property applies to Einstein metrics in the pseudoriemannian case, i.e.:

\begin{question}
Are there any families of nice Lie algebras with the same diagram such that only some elements of the families admit a (diagonal) Einstein metric with $s\neq0$?
\end{question}

We point out that, with the methods of this section, it is not difficult to construct families of nice Lie algebras that have a diagonal Einstein metric for all values of the parameters (see e.g. \cite[Remark 4.6]{ContiRossi:EinsteinNice}).

In the case $s=0$, a family of nice Lie algebras can admit a Ricci-flat metric for all values of the parameters, some or none; among the Lie algebras considered in this paper, see e.g. \texttt{741:6} (Theorem~\ref{thm:7DiagRicciFlat}), \texttt{8542:15a} and \texttt{85321:48} (Theorem~\ref{thm:8dimricciflat}).

Fixing the Lie algebra, a different question to consider is the following:
\begin{question}
Are there any (nice) nilpotent Lie algebras with a family of non-isometric Einstein metrics with $s\neq0$?
\end{question}
The analogous  question for $s=0$ can be answered in the affirmative; see Example~\ref{example:familyRicciFlatMetrics}.

\section{Ricci-flat metrics}
\label{section:ricciflat}
In this section we apply Theorem~\ref{thm:diagonal} and Theorem~\ref{thm:sigmacompatible} constructively and classify nice Lie algebras of dimension $\leq8$ admitting diagonal or $\sigma$-diagonal Ricci-flat metrics, with $\sigma$ a diagram involution. Unlike the situation of Section~\ref{section:nonzero}, some of the Lie algebras obtained in this section appear in families; for rational values of the parameter(s), the argument of Remark~\ref{remark:nilmanifold} yields a Ricci-flat metric on a nilmanifold.

We use the classification of nice Lie algebras contained in \cite{ContiRossi:Construction}. We start by observing that, for $k=0$, conditions \ref{enum:condK} and \ref{enum:condH} can only be satisfied when $M_\Delta$ is nonsurjective. In dimension less than seven the only Lie algebra with nonsurjective $M_\Delta$ is the Lie algebra \texttt{631:6}. We will carry out the computations in detail in the following example, where we also introduce the conventions used throughout the section.

\begin{example}%631:6#2216
\label{example:631}
Consider the Lie algebra:
\[\texttt{631:6}\quad (0,0,0,e^{12},e^{13},e^{25}+e^{34}).\]
Then $\tran{M_\Delta}X =0$ has general solution
\[X=(x,-x,-x,x),\]
and \ref{enum:condP} is trivially satisfied. If nonzero, the vector $X$ belongs to one of two orthants, according to the sign of $x$. By~\ref{enum:condL}, the vectors $\delta\in\Z_2^6$ such that $\logsign{X}\in M_{\Delta,2}(\delta)$ form the possible signatures of a diagonal Ricci-flat metric $g$. Here and in the sequel, we will refer to these vectors $\delta$ as the \emph{admissible signatures}.

Given a diagonal metric $g=g_i e^i\otimes e^i$ we will identify its signature by listing the indices $i$ for which $g_i$ is negative; for instance, $14$ stands for a diagonal metric such that
$g_1$ and $g_4$ are negative and the other $g_i$ are positive; in conventional language, the signature of this metric is $(n-2,2)$. The set of admissible signatures $\mathbf{S}$ will be ordered by length and lexicographic order. 

Recall that when $g$ is a Ricci-flat metric, then so is $-g$: thus, assuming the dimension is $6$, if $14$ is an admissible signature then $2356$ is also admissible. For brevity, we will only list one admissible signature in each like complementary pair, namely the one coming first in the order. On a fixed nice Lie algebra, we will denote by $\tfrac12\mathbf{S}$ the halved set of admissible signatures obtained in this way.

In particular, for \texttt{631:6} we obtain
\[\tfrac12\mathbf{S}=\{4,\ 5,\ 12,\ 13,\ 26,\ 36,\ 146,\ 156\};\]
this shows that this Lie algebra contains a Ricci-flat metric in each indefinite signature $(p,q)$.

To obtain the explicit metrics, we have to solve the system $e^{M_{\Delta}}(g)=X$, which (normalizing to $x=1$) leads to 
\[\frac{g_4}{g_1g_2} = -\frac{g_5}{g_1g_3} =- \frac{g_6}{g_2g_5} =\frac{g_6}{g_3g_4}=1. \]
This system has solution
\[g_4=g_1g_2,\quad g_5=-g_1g_3,\quad g_6=g_1g_2g_3,\]
giving a $3$-parameter family of Ricci-flat metrics, consistently with the fact that $\ker M_\Delta$ has dimension $3$ (see Remark~\ref{rem:ActionKerMDelta}).
In this case there are no diagram involutions $\sigma$ with a $\sigma$-diagonal metric because the only nontrivial automorphism is $(23)(45)$, which does not preserve any nonzero element of $\ker\tran M_\Delta$.
\end{example}

We have obtained the following:
\begin{proposition}
There are no diagonal Ricci-flat metrics on any nice nilpotent Lie algebra of dimension $\leq5$.

In dimension $6$, the nice nilpotent Lie algebra%631:6#2216
\[\texttt{631:6}\quad (0,0,0,e^{12},e^{13},e^{25}+e^{34})\]
is the only one that admits a diagonal Ricci-flat metric, and the admissible signatures are represented by 
\[\tfrac12\mathbf{S}=\{4,\ 5,\ 12,\ 13,\ 26,\ 36,\ 146,\ 156\}.\]

There are no $\sigma$-diagonal Ricci-flat metrics on any nice nilpotent Lie algebra of dimension $\leq6$, for any diagram involution $\sigma$.
\end{proposition}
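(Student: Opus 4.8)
The plan is to prove the proposition by a systematic application of Theorem~\ref{thm:diagonal} and Theorem~\ref{thm:sigmacompatible}, using the key structural observation made at the start of this section: for $k=0$, Conditions~\ref{enum:condK} and~\ref{enum:condH} can hold simultaneously only when $M_\Delta$ is nonsurjective. This is because \ref{enum:condK} with $k=0$ requires $X\in\ker\tran{M_\Delta}$, and \ref{enum:condH} requires $X$ to have all entries nonzero; if $M_\Delta$ were surjective then $\tran{M_\Delta}$ would be injective, forcing $X=0$, which violates \ref{enum:condH}. So the first step is to reduce the search to those nice nilpotent Lie algebras whose root matrix $M_\Delta$ fails to be surjective.

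The second step is to carry out this reduction concretely in each dimension $\leq 6$, relying on the classification of nice nilpotent Lie algebras from \cite{ContiRossi:Construction}. I would examine the finitely many nice diagrams in each dimension and compute, for each, whether $M_\Delta$ is surjective (equivalently, whether the number of arrow-orbits $m$ exceeds the rank). For dimensions $\leq 5$ this should eliminate every candidate, giving the first assertion; for dimension $6$ one verifies that \texttt{631:6} is the unique diagram with nonsurjective root matrix, so it is the only possible carrier of a diagonal Ricci-flat metric. The explicit computation for \texttt{631:6} has already been performed in Example~\ref{example:631}, where $\ker\tran{M_\Delta}$ was found to be spanned by $(1,-1,-1,1)$, Condition~\ref{enum:condP} is vacuous, and the admissible signatures were enumerated; I would simply cite that example to complete the diagonal part of the statement.

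For the $\sigma$-diagonal claim, the third step is to invoke Theorem~\ref{thm:sigmacompatible} under the same nonsurjectivity reduction, but now with the additional requirement that $X$ be $\sigma$-invariant and nonzero, i.e. that $(\ker\tran{M_\Delta})^\sigma\neq 0$ for some diagram involution $\sigma$. In dimensions $\leq 5$ there are no candidates at all. In dimension $6$ the only candidate is again \texttt{631:6}, and as noted in Example~\ref{example:631} its only nontrivial automorphism is $(23)(45)$, which acts on the one-dimensional space $\ker\tran{M_\Delta}=\Span{(1,-1,-1,1)}$; one checks directly that this involution sends $(1,-1,-1,1)$ to its negative (or otherwise fixes no nonzero vector), so no $\sigma$-invariant $X$ satisfying~\ref{enum:condH} exists. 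This rules out $\sigma$-diagonal Ricci-flat metrics in dimension $\leq 6$.

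The only genuine obstacle is the completeness of the case analysis: one must be certain that the classification of \cite{ContiRossi:Construction} has been exhausted and that no nice diagram of dimension $\leq 6$ with nonsurjective root matrix has been overlooked. This is a finite but potentially error-prone verification, and in practice it is carried out with the aid of the computational tools referenced in the paper. Once the surjectivity criterion has pruned the list down to the single algebra \texttt{631:6}, the remaining arguments are the short linear-algebra checks already recorded in Example~\ref{example:631}, so the mathematical content is entirely concentrated in the reduction step and the enumeration it enables.
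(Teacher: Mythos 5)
Your proposal follows exactly the paper's own route: the observation that $k=0$ together with \ref{enum:condK} and \ref{enum:condH} forces $M_\Delta$ to be nonsurjective, the case-by-case check against the classification of \cite{ContiRossi:Construction} showing that \texttt{631:6} is the only such algebra in dimension $<7$, and the explicit computations (including the failure of the involution $(23)(45)$ to fix a nonzero element of $\ker\tran{M_\Delta}$) already recorded in Example~\ref{example:631}. The argument is correct and essentially identical to the paper's.
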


\begin{table}[thp]
\centering
{\setlength{\tabcolsep}{2pt} 
\caption{7-dimensional nice nilpotent Lie algebras that satisfy \ref{enum:condK},~\ref{enum:condH},~\ref{enum:condL} and~\ref{enum:condP} for $k=0$ \label{table:7amenable}}
\begin{tabular}{>{\ttfamily}c C}
\toprule
\textnormal{Name} & \g \\
\midrule
%754321:9\hash22487111&0,0,- e^{12} {(-1+a)},e^{13}, a e^{14}+e^{23},e^{15}+e^{24},e^{25}+e^{16}+e^{34}&123457&\text{no}\\ %754321:9#22487111
75432:3 & 0,0,- e^{12},e^{13},e^{14}+e^{23},e^{15}+e^{24},e^{25}+e^{34}\\ % 75432:3#1007225
\multicolumn{2}{C}{\tfrac12\mathbf{S}=\{5,47,137,267\}}\\[8pt]
741:6 & 0,0,0, {(\lambda-1)} e^{12}, \lambda e^{13},e^{23},e^{16}+e^{25}+e^{34}\\%741:6#2556
\multicolumn{2}{C}{\lambda>1\quad\tfrac12\mathbf{S}=\{ 6,17,23,45,125,134,247,357\}}\\
\multicolumn{2}{C}{0<\lambda<1\quad\tfrac12\mathbf{S}=\{5,13,27,46,126,147,234,367\}}\\
\multicolumn{2}{C}{\lambda<0\quad\tfrac12\mathbf{S}=\{4,12,37,56,136,157,235,267\}}\\[8pt]
731:15&0,0,0,0,e^{12},e^{13},e^{26}+e^{35}\\%731:15#2216
\multicolumn{2}{C}{\tfrac12\mathbf{S}=\{5,6,12,13,27,37,45,46,124,134,157,167,235,247,236,347\}}\\
\bottomrule
\end{tabular}
}
\end{table}

In dimension $7$, we obtain the following:

\begin{theorem}\label{thm:7DiagRicciFlat}
The $7$-dimensional nice nilpotent Lie algebras admitting a diagonal Ricci-flat metric are listed in Table~\ref{table:7amenable}.
\end{theorem}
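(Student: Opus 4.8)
The plan is to apply Theorem~\ref{thm:diagonal} with $k=0$ to every nice nilpotent Lie algebra of dimension $7$, drawing on the classification of such algebras from~\cite{ContiRossi:Construction} recalled in Theorem~\ref{thm:classification_nice_7}. For $k=0$ condition~\ref{enum:condK} reads $\tran{M_\Delta}X=0$, so the admissible vectors $X$ are exactly the elements of $\ker\tran{M_\Delta}$. As already observed, \ref{enum:condK} and~\ref{enum:condH} can be satisfied simultaneously only when $M_\Delta$ is nonsurjective: surjectivity of $M_\Delta$ is equivalent to injectivity of $\tran{M_\Delta}$, which forces $X=0$ and contradicts~\ref{enum:condH}. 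The first step is therefore to compute the root matrix of each algebra and discard those with surjective $M_\Delta$, leaving a finite list of isolated algebras together with the one-parameter families, to be handled separately.

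For each surviving algebra I would run the procedure illustrated in Example~\ref{example:631}. Write down the general solution of $\tran{M_\Delta}X=0$, which exhibits $X$ in terms of free parameters, and fix a basis $\alpha_1,\dotsc,\alpha_r$ of $\ker\tran{M_\Delta}$. Imposing condition~\ref{enum:condP} gives the polynomial system $\abs{X}^{\alpha_i}=\abs{c}^{2\alpha_i}$, $i=1,\dotsc,r$, in those parameters, whose solutions describe the possible $\abs X$; condition~\ref{enum:condH} then discards any solution lying on a coordinate hyperplane. The final step is condition~\ref{enum:condL}, the $\Z_2$-linear feasibility check $\logsign X\in\im M_{\Delta,2}$: when it holds, the admissible signatures are the solutions $\delta$ of $M_{\Delta,2}\delta=\logsign X$, recorded in the halved form $\tfrac12\mathbf S$ as in the table. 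An algebra enters Table~\ref{table:7amenable} exactly when all four conditions hold together. The nonsurjective algebras that do \emph{not} qualify are eliminated by exhibiting an obstruction --- typically that every $X$ compatible with~\ref{enum:condH} and~\ref{enum:condP} has $\logsign X\notin\im M_{\Delta,2}$, precisely as for \texttt{754321:9} in Example~\ref{es:754321:9}.

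The combinatorial bulk --- enumerating the $7$-dimensional nice algebras and sieving by surjectivity of $M_\Delta$ --- is large but mechanical and best delegated to the computer implementation used for the classification. The genuinely delicate point is the one-parameter family \texttt{741:6}, where $c$ depends on $\lambda$, so both the system~\ref{enum:condP} and the outcome of the test~\ref{enum:condL} vary with $\lambda$, and the set of admissible signatures can change as $\lambda$ crosses the values at which a structure constant vanishes or changes sign. I would split the range into $\lambda>1$, $0<\lambda<1$ and $\lambda<0$ (the degenerate values $\lambda=0,1$ being excluded), verify in each interval that the sign pattern of $X$ forced by~\ref{enum:condP} and~\ref{enum:condH} lands in $\im M_{\Delta,2}$, and read off the three distinct lists of $\tfrac12\mathbf S$ recorded in the table. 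For the parameter-free algebras \texttt{75432:3} and \texttt{731:15} the computation is entirely analogous to Example~\ref{example:631}: one checks directly the compatibility of~\ref{enum:condH},~\ref{enum:condL} and~\ref{enum:condP} and reads off the stated admissible signatures.
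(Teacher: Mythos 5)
Your proposal follows essentially the same route as the paper: sieve the classified $7$-dimensional nice nilpotent Lie algebras by nonsurjectivity of $M_\Delta$ (equivalently, compatibility of \ref{enum:condK} and \ref{enum:condH}), then test \ref{enum:condP}, \ref{enum:condH} and \ref{enum:condL} case by case, treating \texttt{75432:3} and \texttt{731:15} as in Example~\ref{example:631} and splitting the family \texttt{741:6} according to the sign pattern of $\lambda$. The paper additionally records that the surviving non-qualifying algebras fail in two distinct ways --- \texttt{754321:9} via the $\logsign$ obstruction of \ref{enum:condL}, while \texttt{75421:4} and \texttt{74321:12} fail because \ref{enum:condP} forces $X=0$ --- and both mechanisms are covered by your scheme.
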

\begin{proof}
Linear computations on a case-by-case basis show that the nice Lie algebras of dimension $7$ that satisfy \ref{enum:condK} and~\ref{enum:condH} are precisely those listed in Table~\ref{table:7amenable} plus the following:
\begin{align*} 
\texttt{754321:9}&\quad (0,0, {(1-\lambda)}e^{12} ,e^{13}, \lambda e^{14}+e^{23},e^{15}+e^{24},e^{16}+e^{25}+e^{34}),\\%754321:9#22487111
\texttt{75421:4}&\quad (0,0,e^{12},e^{13},e^{23},e^{15}+e^{24},e^{16}+e^{34}),\\%75421:4#1120027
\texttt{74321:12}&\quad (0,0,0,- e^{12},e^{14}+e^{23},e^{15}+e^{34},e^{16}+e^{35}).%74321:12#989015
\end{align*}
The computations for \texttt{754321:9} have been made in Example~\ref{es:754321:9}.%754321:9#22487111

For \texttt{75421:4}  we find that vectors in $\ker \tran{M_\Delta}$ have the form  $X=(x_7,x_7+x_6,-x_7-x_6,-x_7-x_6,x_6,-x_7,x_7)$, and %75421:4#1120027
\ref{enum:condP} reads
\[\left|\frac{x_6}{x_6+x_7}\right|=1,\quad\left|\frac{x_7}{x_6+x_7}\right|=1;\]
the only solution is $x_6=x_7=0$, which violates \ref{enum:condH}; similarly for \texttt{74321:12}. %74321:12#989015

For \texttt{75432:3}, \ref{enum:condP} is trivially satisfied. Imposing \ref{enum:condL} on a generic vector of $\ker\tran M_\Delta$ not contained in a coordinate hyperplane, we find $8$ admissible signatures, i.e. %75432:3#1007225
\[\tfrac12\mathbf{S}=\{5,\ 47,\ 137,\ 267\};\]
see also Example~\ref{example:familyRicciFlatMetrics}.
Similarly for \texttt{731:15}; the admissible signatures for this nice Lie algebra are listed in Table~\ref{table:7amenable}. %731:15\hash2216

For \texttt{741:6}, we get $X=(x_6,-x_5-x_6,x_5,x_5,-x_5-x_6,x_6)$, and \ref{enum:condP} gives: %741:6\hash2556
\[\frac{x_6^{2}}{{(x_6+x_5)}^{2}}={(-1+\lambda)}^{2},\quad \frac{x_5^{2}}{{(x_6+x_5)}^{2}}=\lambda^{2},\]
which has a solution $x_6= (\frac{1}{\lambda}-1)x_5$ for all $\lambda$. Note that $x_5+x_6$ has the same sign as $\lambda$, and $x_5$ is positive if and only if $x_6$ has the same sign as $-1+1/\lambda$. Hence, the admissible signatures are represented by:
\begin{align*}
\lambda>1:\quad\tfrac12\mathbf{S}&=\{ 6,\ 17,\ 23,\ 45,\ 125,\ 247,\ 134,\ 357\}\\
0<\lambda<1:\quad\tfrac12\mathbf{S}&=\{5,\ 13,\ 27,\ 46,\ 126,\ 147,\ 234,\ 367\}\\
\lambda<0:\quad\tfrac12\mathbf{S}&=\{4,\ 37,\ 12,\ 56,\ 136,\ 157,\ 235,\ 267\}.
\end{align*}
These results are collected in Table~\ref{table:7amenable}.
\end{proof}

For a $\sigma$-diagonal metric $g=g_{i}e^i\otimes e^{\sigma_i}$, we will also denote by $g_{ij}$ the coefficient of $e^i\otimes e^j$,  and represent the signature by listing the indices $i$ for which  $g_{i\sigma_i}=g_i$ is negative. We remark that if $\sigma$ interchanges $i$ and $j$, then $g_{i\sigma_i}$ and $g_{j\sigma_j}$ coincide by construction, so either both or none of $i$ and $j$ must appear in the list. For each signature $(p,q)$,  we will denote by $\mathbf{S}_\sigma(p,q)$ the set of $\delta\in\Z_2^n$ such that there is a diagram involution $\sigma$ and a Ricci-flat $\sigma$-diagonal metric $g$ of signature $(p,q)$ with $\logsign g=\delta$; as usual, each $\delta$ will be represented by the indices of its nonzero entries.

With the aim of constructing $\sigma$-diagonal Ricci-flat metrics, we apply Theorem~\ref{thm:sigmacompatible} to nice nilpotent Lie algebras. Since conditions~\ref{enum:condK},~\ref{enum:condH} are the same as in Theorem~\ref{thm:diagonal}, we restrict our attention to those Lie algebras that satisfy both conditions and that have an order $2$ automorphism of the diagram $\Delta$. We obtain:

\begin{theorem}
\label{thm:sigmacompatible7}
The only nonabelian nice nilpotent Lie algebra of dimension $7$ with a diagram involution $\sigma$ and a Ricci-flat $\sigma$-diagonal metric is 
\[\texttt{741:6}\quad(0,0,0, (\lambda - 1) e^{12}, \lambda e^{13},e^{23},e^{16}+e^{25}+e^{34}) \]%741:6#2556
with \begin{itemize}
	\item $\sigma=(23) (45)$, $\lambda=\dfrac12$ and  \[\mathbf{S}_\sigma(4,3)=\{1,\ 237,\ 457,\ 12345\},\quad \mathbf{S}_\sigma(3,4)=\{67,\ 1236,\ 1456,\ 234567\};\]
	\item $\sigma=(12)(56)$, $\lambda=-1$ and  \[\mathbf{S}_\sigma(4,3)=\{3,\ 127,\ 567,\ 12356\}, \quad \mathbf{S}_\sigma(3,4)=\{47,\ 1234,\ 3456,\ 124567\};\]
	\item $\sigma=(13)(46)$, $\lambda=2$ and \[\mathbf{S}_\sigma(4,3)=\{2,\ 137,\ 467,\ 12346\},\quad \mathbf{S}_\sigma(3,4)=\{57,\ 1235,\ 2456,\ 134567\}.\]
\end{itemize}
\end{theorem}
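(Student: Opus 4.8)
The plan is to apply Theorem~\ref{thm:sigmacompatible} systematically, reusing the work already done for the diagonal case in Theorem~\ref{thm:7DiagRicciFlat}. Since conditions~\ref{enum:condK} and~\ref{enum:condH} coincide with those of the diagonal setting, the only candidate Lie algebras are those appearing in the proof of Theorem~\ref{thm:7DiagRicciFlat} that admit these two conditions, namely the entries of Table~\ref{table:7amenable} together with \texttt{754321:9}, \texttt{75421:4} and \texttt{74321:12}. First I would discard any Lie algebra whose diagram $\Delta$ has no order-two automorphism, and for the survivors enumerate all diagram involutions $\sigma\in\Aut(\Delta)$. For each such $\sigma$ I would then test whether $(\ker\tran M_\Delta)^\sigma$ contains a vector $X$ avoiding every coordinate hyperplane; if the $\sigma$-invariant part of the kernel is trivial or forces some coordinate of $X$ to vanish, that pair $(\g,\sigma)$ is immediately ruled out. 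This is exactly the mechanism illustrated in Example~\ref{example:2216} for \texttt{731:15}, where $\sigma=(23)(56)$ leaves no nonzero invariant vector in $\ker\tran M_\Delta$; I expect the same linear obstruction to eliminate most candidates, including the remaining diagram involutions of \texttt{731:15} and \texttt{75432:3}.

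For the candidates that survive the linear screening, I would impose Condition~\ref{enum:condPsigma}, which requires $\abs{X}^{\alpha_i}=\abs{c}^{2\alpha_i}$ for a basis $\alpha_1,\dotsc,\alpha_k$ of $(\ker\tran M_\Delta)^\sigma$. The key point is that passing to the $\sigma$-invariant subspace typically lowers the number of equations relative to the diagonal case, but it simultaneously constrains the structure constants: because $\abs{c}^{2\alpha_i}$ now involves only $\sigma$-invariant exponents, these equations will generically pin down the parameter $\lambda$ to specific rational values rather than leaving a free interval, which is why the statement lists isolated values $\lambda=\tfrac12,-1,2$ for the three involutions of \texttt{741:6}. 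Concretely, for \texttt{741:6} I would compute the general $\sigma$-invariant element of $\ker\tran M_\Delta$ for each of the three candidate involutions $(23)(45)$, $(12)(56)$, $(13)(46)$, substitute into~\ref{enum:condPsigma}, and solve the resulting (single, by dimension count) equation for the ratio of the free parameters in $X$; the condition that this be consistent with $X$ avoiding coordinate hyperplanes should force the stated value of $\lambda$ in each case.

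The final step is to extract the admissible signatures via Condition~\ref{enum:condLsigma}, $\logsign X+\logsign c+\logsign\tilde c=M_{\Delta,2}\delta$, solving for $\sigma$-invariant $\delta$. Here I must keep track of the correction term $\logsign c+\logsign\tilde c$, which distinguishes this computation from the purely diagonal one and which depends on the sign pattern of the structure constants at the chosen $\lambda$; I would then sort the solutions $\delta$ by signature $(p,q)$ and pair each with its complement, recording only the representatives listed in the theorem. The main obstacle I anticipate is twofold: first, verifying that the three involutions of \texttt{754321:9}, \texttt{75421:4} and \texttt{74321:12} genuinely fail—for \texttt{75421:4} and \texttt{74321:12} the diagonal Condition~\ref{enum:condP} already had only the trivial solution, so the $\sigma$-restricted version can only be worse, but \texttt{754321:9} requires care since its failure in the diagonal case came from~\ref{enum:condL} rather than~\ref{enum:condP}, and one must check that no $\sigma$-invariant signature repairs this. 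Second, the signature bookkeeping in~\ref{enum:condLsigma} is error-prone precisely because the $\sigma$-invariance constraint couples the entries $\delta_i$ and $\delta_{\sigma_i}$; I would organize this as a finite linear-algebra problem over $\Z_2$, computing $M_{\Delta,2}$ restricted to the $\sigma$-invariant sublattice and inverting the affine relation, which reduces the whole classification to a terminating case-by-case verification well suited to machine computation.
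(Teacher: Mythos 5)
Your plan follows the paper's proof essentially verbatim: linear screening via the $\sigma$-invariant part of $\ker\tran{M_\Delta}$ (which in the paper already eliminates every $7$-dimensional candidate except \texttt{741:6}), then conditions \ref{enum:condPsigma} and \ref{enum:condLsigma} for the three involutions, exactly as you describe. The one small misprediction is that \ref{enum:condPsigma} alone does not pin down $\lambda$ to the stated rational values --- for $\sigma=(23)(45)$ it yields $(\lambda-1)^2\lambda^2=\frac1{16}$, which also has irrational roots --- and it is \ref{enum:condLsigma}, via the sign of $\lambda(\lambda-1)$, that selects $\lambda=\frac12$ (and similarly in the other two cases); since you impose that condition anyway, this does not affect the outcome.
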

\begin{proof}
By Theorem~\ref{thm:sigmacompatible}, a $\sigma$-diagonal metric exists only if there exists a $\sigma$-invariant $X\in\ker \tran M_\Delta$ which is not contained in a coordinate hyperplane. This eliminates all $7$-dimensional nice Lie algebras except \texttt{741:6}. We have three possibilities for $\sigma\in\Aut(\Delta)$:

$\bullet\ \sigma=(23)(45)$, acting on $\R^6\cong V_\Delta$ as $(12) (56)$. The elements in $(\ker \tran M_\Delta)^\sigma$ can be written as
\[X=(x,x,-2x,-2x,x,x),\]
whilst $c=(\lambda-1, \lambda,1,1,1,1)$ and $\tilde c=(\lambda,\lambda-1,-1,1,1,1)$. Condition~\ref{enum:condPsigma} reads
\[
(\lambda-1)^2\lambda^2=\frac{x^4}{(2x)^4}=\frac1{16};\]
condition~\ref{enum:condLsigma} holds if
\[\epsilon=(\logsign(\lambda-1)+\logsign \lambda,\logsign(\lambda-1)+\logsign \lambda,1,0,0,0)+\logsign X\]
is in $\im M_{\Delta,2}$. This condition is unaffected by changing the sign of $X$, since $[1]$ belongs to $\im M_{\Delta,2}$; in fact, changing the sign of $X$ corresponds to changing the sign of the corresponding $\sigma$-diagonal metrics, which clearly preserves Ricci-flatness.

Thus, we can assume
\[\epsilon = (\logsign(\lambda-1)+\logsign \lambda,\logsign(\lambda-1)+\logsign \lambda,0,1,0,0);\]
then $\epsilon$ is in the image of $M_{\Delta,2}$ 
 for $\logsign(\lambda-1)+\logsign \lambda =1$, i.e.
$\lambda(\lambda-1)<0$. This implies $\lambda=\dfrac12$ and $\epsilon=(1,1,0,1,0,0)$. The $\sigma$-invariant solutions of  $M_{\Delta,2}\delta=\epsilon$ are
\[(1,0,0,0,0,0,0),(1,1,1,1,1,0,0),(0,0,0,1,1,0,1),(0,1,1,0,0,0,1),\]
which we abbreviate as $1$, $12345$, $457$ and $237$. The actual signature of the resulting $\sigma$-diagonal metrics is $(4,3)$, as one can see by writing 
\[g=g_{11}e^1\otimes e^1+g_{23}e^2\odot e^3+g_{45}e^4\odot e^5+g_{66}e^6\otimes e^6+g_{77}e^7\otimes e^7.\] 
The other possible signature (corresponding to changing the sign of $X$) is $(3,4)$; summing up,
\[
\mathbf{S}_\sigma(4,3)=\{1,\ 237,\ 457,\ 12345\},\quad\mathbf{S}_\sigma(3,4)=\{67,\ 1236,\ 1456,\ 234567\}.
\]

$\bullet\ \sigma=(12)(56)$, acting on $\R^6$ as $(23) (45)$, with
$X=(2  x,- x,- x,- x,- x,2  x)$, $c=(\lambda-1, \lambda,1,1,1,1)$ and $\tilde c=(1-\lambda,1,\lambda,1,1,1)$.
In this case we get \[\frac{(\lambda-1)^4}{\lambda^2}=\frac{(2x)^4}{x^4}=16.\]
For $x>0$, \ref{enum:condLsigma} reads
\[(0,\logsign \lambda,\logsign \lambda,0,0,1)=M_{\Delta,2}(\delta),\]
which has a solution in $\delta$ if and only if $\lambda<0$. Therefore, there only exist Ricci-flat $\sigma$-diagonal metrics for $\lambda=-1$. Computations as in the previous case show that the possible signatures are
\[
\mathbf{S}_\sigma(4,3)=\{3,\ 127,\ 567,\ 12356\},\quad\mathbf{S}_\sigma(3,4)=\{47,\ 1234,\ 3456,\ 124567\}.
\]

$\bullet\ \sigma=(13) (46)$, acting on $\R^6$ as $(13) (46)$, with $X=( - x,2  x,- x,- x,2  x,- x)$,
$c=(\lambda-1,\lambda,1,1,1,1)$ and $\tilde c=(-1,-\lambda,1-\lambda,1,1,1)$. We get
\[\frac{\lambda^4}{(\lambda-1)^2} =16\]
and, for $x>0$,
$\epsilon =(\logsign(\lambda-1),1,\logsign (\lambda-1),1,0,1),$
giving $\lambda-1>0$, i.e. $\lambda=2$. The possible signatures are listed below:
\[
\mathbf{S}_\sigma(4,3)=\{2,\ 137,\ 467,\ 12346\},\quad\mathbf{S}_\sigma(3,4)=\{57,\ 1235,\ 2456,\ 134567\}.\qedhere
\]
\end{proof}

\FloatBarrier
\begin{footnotesize}
{\setlength{\tabcolsep}{2pt}
\begin{longtable}[c]{>{\ttfamily}c C}
\caption{8-dimensional nice nilpotent Lie algebras that satisfy \ref{enum:condK},~\ref{enum:condH},~\ref{enum:condL} and~\ref{enum:condP} for $k=0$ \label{table:8amenable}}\\
\toprule
\textnormal{Name} & \g \\
\midrule
\endfirsthead
\multicolumn{2}{c}{\tablename\ \thetable\ -- \textit{Continued from previous page}} \\
\toprule
\textnormal{Name} & \g \\
\midrule
\endhead
\bottomrule\\[-7pt]
\multicolumn{2}{c}{\tablename\ \thetable\ -- \textit{Continued to next page}} \\
\endfoot
\bottomrule\\[-7pt]
%\multicolumn{2}{c}{End of \tablename\ \thetable} \\
\endlastfoot
86531:5&0,0,- e^{12},e^{13},e^{23},e^{14},e^{15}+e^{24},e^{26}+e^{34}\\* %86531:5#997401
\multicolumn{2}{C}{\tfrac12\mathbf{S}=\{48,56,125,138,278,357,1457,1678\}}\\[8pt]
85432:9&0,0,0,- e^{12},e^{14},e^{15}+e^{24},e^{16}+e^{25},e^{26}+e^{45}\\* %85432:9#1007225
\multicolumn{2}{C}{\tfrac12\mathbf{S}=\{6,36,58,148,278,358,1348,1456\}}\\[8pt]
8542:11&0,0,0,e^{12},e^{13}+e^{24},e^{14},e^{25}+e^{34},e^{15}+e^{26}\\* %8542:11#46494
\multicolumn{2}{C}{\tfrac12\mathbf{S}=\{5,126,138,147,234,278,367,468\}}\\[8pt]
8542:15a&0,0,0,{(a_2-1)} e^{12} , a_2 e^{13}+e^{24},e^{14}+e^{23},e^{15}+e^{26},e^{16}+e^{25}+e^{34}\\*%8542:15#47667a
\multicolumn{2}{C}{a_2>1\quad\tfrac12\mathbf{S}=\{5,126,137,148,234,278,368,467\}}\\*
\multicolumn{2}{C}{0<a_2<1\quad\tfrac12\mathbf{S}=\{6,125,134,178,237,248,358,457\}}\\[8pt]
8542:15b&0,0,0,{(a_2-1)} e^{12},a_2 e^{13}+e^{24},-e^{14}+e^{23},-e^{15}+e^{26},e^{16}+e^{25}+e^{34}\\*%8542:15#47667b
\multicolumn{2}{C}{a_2>1\quad\tfrac12\mathbf{S}=\{5,126,137,148,234,278,368,467\}}\\*
\multicolumn{2}{C}{0<a_2<1\quad\tfrac12\mathbf{S}=\{6,125,134,178,237,248,358,457\}}\\[8pt]
842:111a&0,0,0,0,e^{14}+e^{23},e^{13}+e^{24},e^{16}+e^{25},e^{15}+e^{26}\\*%842:111#2908a
\multicolumn{2}{C}{\tfrac12\mathbf{S}=\{5,6,125,126,134,137,148,178,234,237,248,278,358,368,457,467\}}\\[8pt]
842:111b&0,0,0,0,-e^{14}+e^{23},e^{13}+e^{24},-e^{16}+e^{25},e^{15}+e^{26}\\*%842:111#2908b
\multicolumn{2}{C}{\tfrac12\mathbf{S}=\{5,6,125,126,134,137,148,178,234,237,248,278,358,368,457,467\}}\\[8pt]
841:48&0,0,0,0, (a_2-1) e^{12} , a_2 e^{13},e^{23},e^{17}+e^{26}+e^{35}\\*%841:48#2556
\multicolumn{2}{C}{a_2>1\quad\tfrac12\mathbf{S}=\{6,13,28,46,57,127,134,158,235,248,378,457,1247,1256,1458,1678\}}\\*
\multicolumn{2}{C}{0<a_2<1\quad\tfrac12\mathbf{S}=\{7,18,23,47,56,126,135,148,234,258,368,456,1246,1345,1367,1257\}}\\*
\multicolumn{2}{C}{a_2<0\quad\tfrac12\mathbf{S}=\{5,12,38,45,67,124,137,168,236,278,348,467,1347,1356,1468,1578\}}\\[8pt]
831:37&0,0,0,0,0,e^{12},e^{13},e^{27}+e^{36}\\*%831:37#2216
\multicolumn{2}{L}{\tfrac12\mathbf{S}=\{6,7,12,13,28,38,46,47,56,57,124,125,134,135,168,178,236,237,248,}\\*
\multicolumn{2}{R}{258,348,358,457,456,1245,1267,1345,1367,1468,1478,1568,1578\}}\\
\end{longtable}
}
\end{footnotesize}

We now turn our attention to nice nilpotent Lie algebras of dimension $8$.

\begin{theorem}
\label{thm:8dimricciflat}
The $8$-dimensional nice nilpotent Lie algebras admitting a diagonal Ricci-flat metric are listed in Table~\ref{table:8amenable}.
\end{theorem}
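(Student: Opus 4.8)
The plan is to mirror the case-by-case strategy used in dimension $7$ (Theorem~\ref{thm:7DiagRicciFlat}), applying the criterion of Theorem~\ref{thm:diagonal} with $k=0$ to each $8$-dimensional nice nilpotent Lie algebra from the classification of \cite{ContiRossi:Construction}. The first reduction is purely linear. As observed at the start of this section, conditions \ref{enum:condK} and \ref{enum:condH} can be satisfied only when $M_\Delta$ is nonsurjective, since \ref{enum:condK} forces $X\in\ker\tran M_\Delta$ while \ref{enum:condH} requires every coordinate of $X$ to be nonzero. Computing $\operatorname{rank}M_\Delta$ for each of the $45$ families and $872$ isolated nice Lie algebras discards the overwhelming majority at once, leaving a short list of candidates.

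For each surviving candidate the next step is still essentially linear: describe $\ker\tran M_\Delta$ explicitly and decide whether it contains a vector lying in no coordinate hyperplane, i.e. whether \ref{enum:condH} is compatible with \ref{enum:condK}. This produces the full list of $8$-dimensional nice Lie algebras satisfying \ref{enum:condK} and \ref{enum:condH}, exactly as in the dimension $7$ argument where this list consisted of the entries of the table together with a handful of extra algebras.

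The genuinely nonlinear step, and the main obstacle, is condition \ref{enum:condP}: for a basis $\alpha_1,\dots,\alpha_k$ of $\ker\tran M_\Delta$ one must solve $\abs{X}^{\alpha_i}=\abs{c}^{2\alpha_i}$ with $X$ ranging in $\ker\tran M_\Delta$, and determine whether a solution exists with all coordinates nonzero. Some candidates passing the previous test are eliminated here because the only solution of \ref{enum:condP} forces $X$ into a coordinate hyperplane, violating \ref{enum:condH} (this is precisely what happened for \texttt{75421:4} and \texttt{74321:12} in dimension $7$). For the one-parameter families appearing in Table~\ref{table:8amenable}, such as \texttt{8542:15a}, \texttt{8542:15b} and \texttt{841:48}, the equations involve the parameter $a_2$ and must be analyzed over its relevant ranges; I expect the sign pattern of $\abs{X}$ to change across the critical values $a_2=0,1$, yielding different admissible-signature sets on each interval, exactly as for \texttt{741:6} in dimension $7$.

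Finally, for each Lie algebra, and each parameter range, for which \ref{enum:condP} admits an interior solution, I would impose condition \ref{enum:condL}: the admissible signatures $\delta$ are those with $\logsign X\in\im M_{\Delta,2}$, and I would record one representative of each complementary pair following the $\tfrac12\mathbf{S}$ convention of Example~\ref{example:631}. The whole procedure is algorithmic and was carried out by computer; the essential difficulty is organizational—guaranteeing completeness over the very large initial list and correctly handling the parameter-dependent families—rather than conceptual.
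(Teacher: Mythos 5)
Your proposal follows essentially the same route as the paper's proof: a computer-assisted case-by-case application of Theorem~\ref{thm:diagonal} with $k=0$ over the classification of \cite{ContiRossi:Construction}, first discarding algebras via the linear conditions \ref{enum:condK} and \ref{enum:condH}, then eliminating further candidates through the polynomial condition \ref{enum:condP} and its compatibility with \ref{enum:condH} and \ref{enum:condL} (including the parameter-range analysis for the families, which is exactly where \texttt{8542:15a}, \texttt{8542:15b} with $a_2<0$ drop out), and finally recording admissible signatures via \ref{enum:condL}. This matches the paper's argument in both structure and substance.
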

\begin{proof}
For each $8$-dimensional nice nilpotent Lie algebra in the classification of~\cite{ContiRossi:Construction}, we compute the generic vector $X$ in $\ker \tran M_\Delta$, ruling out those Lie algebras for which no choices of $X$ satisfies \ref{enum:condK} and~\ref{enum:condH}. In the remaining cases we need to study the polynomial condition~\ref{enum:condP} and its compatibility with \ref{enum:condH} and~\ref{enum:condL}. We omit explicit computations, since they are essentially the same as in Example~\ref{example:631} and Theorem~\ref{thm:7DiagRicciFlat}, although longer and more tedious, and only summarize the results. The nice Lie algebras:\begin{footnotesize}
\begin{align*}
\texttt{8654321:19}&\ (0,0,e^{12},e^{13},e^{14}+e^{23},e^{15}+e^{24},2 e^{16}+e^{25}+e^{34},e^{17}+e^{35})\\%8654321:19#481323912
\texttt{8654321:20}&\ (0,0,e^{12},- e^{13},e^{14}+e^{23},-e^{15}+e^{24},e^{16}+e^{34},e^{17}+e^{26}+e^{35})\\%8654321:20\hash482228856
\texttt{8654321:25}&\ (0,0,- e^{12},- e^{13},\frac{3}{2} e^{14}-\frac{1}{2} e^{23},e^{15}+\frac{1}{2} e^{24},e^{16}+e^{25}+e^{34},e^{27}+e^{36}+e^{45})\\%8654321:25#485110968
\texttt{865431:9}&\ (0,0,- e^{12},2 e^{13},e^{14}+e^{23},2 e^{15}+e^{24},e^{25}+e^{34},e^{17}+e^{26}+e^{35})\\%865431:9#23856171
\texttt{854321:25}&\ (0,0,0,- e^{12},e^{14}+e^{23},e^{15}+e^{34},e^{16}+e^{24}+e^{35},e^{17}+e^{25}+e^{36})\\%854321:25#22786008
\texttt{85421:26}&\ (0,0,0,e^{12},e^{14},e^{24},e^{16}+e^{25},e^{17}+e^{45})\\%85421:26#1120027
\texttt{8531:90}&\ (0,0,0,e^{12},e^{13},e^{24},e^{25}+e^{34},e^{27}+e^{36})\\%8531:90#65838
\texttt{8531:93}&\ (0,0,0,- e^{12},e^{13},e^{14},e^{15},e^{27}+e^{36}+e^{45})\\%8531:93#66090
\texttt{84321:20}&\ (0,0,0,0,- e^{12},e^{15}+e^{23},e^{16}+e^{35},e^{17}+e^{36})\\%84321:20#989015
\texttt{842:107}&\ (0,0,0,0,- e^{12},-e^{14}+e^{23},e^{16}+e^{35},e^{26}+e^{45})%842:107#2869
\end{align*}
\end{footnotesize}do not satisfy \ref{enum:condP}. The following nice Lie algebras do not admit a vector $X$ satisfying all of \ref{enum:condH},~\ref{enum:condL} and~\ref{enum:condP}:\begin{footnotesize}
\begin{align*}
\texttt{8654321:24}&\ (0,0,{(1-a_4)}e^{12} ,a_4 e^{13},e^{14}+e^{23},\\
&\qquad\qquad\qquad\qquad e^{15}+a_4e^{24},{(2-a_4)} e^{16}+e^{25}+e^{34},e^{17}+e^{26}+e^{35})\\%8654321:24#484836536
\texttt{86531:14}&\ (0,0,a_1e^{12},e^{13},e^{23},(1-a_1) e^{14} ,e^{15}+e^{24},e^{17}+e^{26}+e^{34})\\%86531:14#1207679
\texttt{854321:20}&\ (0,0,0,{(1-a_3)}e^{12} ,e^{14}, a_3 e^{15}+e^{24},e^{16}+e^{25},e^{17}+e^{26}+e^{45})\\%854321:20#22487111
\texttt{85321:48}&\ (0,0,0,e^{12}, (a_3-1)e^{13} , a_3 e^{14}+e^{23} ,e^{25}+e^{34}+ {(a_3-2)} e^{16},e^{36}+e^{17}+e^{45})\\%85321:48#1016761
\texttt{8531:103}&\ (0,0,0,e^{12},e^{13},2  e^{14},e^{25}+e^{34},e^{17}+e^{36}+e^{45})\\%8531:103#66702
\texttt{8521:12}&\ (0,0,0,e^{12},- e^{13},e^{23},-2 e^{16}+e^{25}+e^{34},e^{17}+e^{45}).%8521:12#70543
\end{align*}
\end{footnotesize}The same holds for \texttt{8542:15a} and \texttt{8542:15b} with $a_2<0$ (recall that parameters that appear in the structure constants of a family of nice Lie algebras are always assumed to be nonzero).%%8542:15#47667a and 8542:15#47667b

The nice nilpotent Lie algebras that satisfy all the conditions of Theorem~\ref{thm:diagonal} are listed in  Table~\ref{table:8amenable} together with their admissible signatures, according with the convention explained in Example~\ref{example:631}.
\end{proof}

Notice that some of the Lie algebras listed in Tables~\ref{table:7amenable} and~\ref{table:8amenable} are decomposable in the category of nice Lie algebras: since a diagonal metric is the product of diagonal metrics on each factor, the Ricci-flat diagonal metric can be recovered from Ricci-flat diagonal metrics of lower dimension. In particular we have that:
\begin{align*}
\texttt{731:15}&= \texttt{631:6}\oplus \R\\%(%731:15\hash2216) and (631:6#2216)
\texttt{85432:9}&= \texttt{75432:3}\oplus \R\\%(85432:9#1007225) and (75432:3#1007225)
\texttt{841:48}&= \texttt{741:6}\oplus \R\\%(841:48#2556) and (741:6#2556)
\texttt{831:37}&= \texttt{731:15}\oplus \R=\texttt{631:6}\oplus \R^2%(831:37#2216) and (%731:15\hash2216) and (631:6#2216)
\end{align*}
and the admissible signatures of diagonal Ricci-flat metrics on these Lie algebras can be recovered from the admissible signatures in lower dimension.

\FloatBarrier
\begin{footnotesize}
{\setlength{\tabcolsep}{2pt}
\begin{longtable}[c]{>{\ttfamily}c C}
\caption{8-dimensional nice nilpotent Lie algebras that admit an order two automorphism $\sigma$ and that satisfy \ref{enum:condK},~\ref{enum:condH},~\ref{enum:condLsigma} and~\ref{enum:condPsigma} for $k=0$\label{table:8amenableSigma}}\\
\toprule
\textnormal{Name} & \g\\
$\sigma$ & \text{Metric} \\
\midrule
\endfirsthead
\multicolumn{2}{c}{\tablename\ \thetable\ -- \textit{Continued from previous page}} \\
\toprule
\textnormal{Name} & \g\\
$\sigma$ & \text{Metric} \\
\midrule
\endhead
\bottomrule\\[-7pt]
\multicolumn{2}{c}{\tablename\ \thetable\ -- \textit{Continued to next page}} \\
\endfoot
\bottomrule\\[-7pt]
%\multicolumn{3}{c}{End of \tablename\ \thetable} \\
\endlastfoot
852:30&0,0,0, a_1 e^{12}, a_2 e^{13},e^{23},e^{14}+e^{26}+e^{35},e^{15}+e^{24}+e^{36}\\*%852:30#2709 aka 852:10\hash2709
\textnormal{(23)(45)(78)}& a_2 a_1=2\quad (g_{11},g_{23},g_{45},g_{66},g_{78})\\*
& \mathbf{S}_{\sigma}(5,3)=\{2345\},\quad \mathbf{S}_{\sigma}(4,4)=\{236,14578\},\quad \mathbf{S}_{\sigma}(3,5)=\{1678\}\\[3pt]
\textnormal{(12)(56)(78)}&a_2=2 a_1^2\quad (g_{12},g_{33},g_{44},g_{56},g_{78})\\*
&\mathbf{S}_{\sigma}(5,3)=\{1256\},\quad\mathbf{S}_{\sigma}(4,4)=\{124,35678\},\quad\mathbf{S}_{\sigma}(3,5)=\{3478\} \\[3pt]
\textnormal{(13)(46)(78)}&a_1=-2 a_2^2\quad (g_{13},g_{22},g_{46},g_{55},g_{78})\\*
&\mathbf{S}_{\sigma}(5,3)=\{1346\},\quad\mathbf{S}_{\sigma}(4,4)=\{135,24678\},\quad\mathbf{S}_{\sigma}(3,5)=\{2578\}\\[8pt]
842:111a&0,0,0,0,e^{14}+e^{23},e^{13}+e^{24},e^{16}+e^{25},e^{15}+e^{26}\\*%842:111# aka 842:21\hash2908a
\textnormal{(34)(56)(78)}& (g_{11},g_{22},g_{34},g_{56},g_{78})\\*
&\mathbf{S}_{\sigma}(4,4)=\{134,178,234,278,13456,15678,23456,25678\}\\[3pt]
\textnormal{(12)(34)(78)}& (g_{12},g_{34},g_{55},g_{66},g_{78})\\*
&\mathbf{S}_{\sigma}(4,4)=\{5,6,125,126,34578,34678,1234578,1234678\}\\[8pt]
841:48&0,0,0,0, {(-1+a_2)} e^{12}, a_2 e^{13},e^{23},e^{17}+e^{26}+e^{35}\\*%841:48#2556 aka 841:16\hash2556
\textnormal{(23)(56)}&a_2=\frac{1}{2}\quad(g_{11},g_{23},g_{44},g_{56},g_{77},g_{88})\\*
&\mathbf{S}_{\sigma}(5,3)=\{1,238,568,12356\}\\*
&\mathbf{S}_{\sigma}(4,4)=\{14,78,1237,1567,2348,4568,123456,235678\}\\*
&\mathbf{S}_{\sigma}(3,5)=\{478,12347,14567,2345678\}\\[3pt]
\textnormal{(12)(67)}&a_2=-1\quad(g_{12},g_{33},g_{44},g_{55},g_{67},g_{88})\\*
&\mathbf{S}_{\sigma}(5,3)=\{3,128,678,12367\}\\*
&\mathbf{S}_{\sigma}(4,4)=\{34,58,1235,1248,3567,4678,12348,125678\}\\*
&\mathbf{S}_{\sigma}(3,5)=\{458,12345,34567,1245678\}\\[3pt]
\textnormal{(13)(57)}&a_2=2\quad(g_{13},g_{22},g_{44},g_{57},g_{66},g_{88})\\*
&\mathbf{S}_{\sigma}(5,3)=\{2,138,567,12357\}\\*
&\mathbf{S}_{\sigma}(4,4)=\{24,68,1236,1348,2567,4578,123457,135678\}\\*
&\mathbf{S}_{\sigma}(3,5)=\{468,12348,24567,1345678\}\\[8pt]
831:37&0,0,0,0,0,e^{12},e^{13},e^{27}+e^{36}\\*%831:37#2216 aka 831:14\hash2216
\textnormal{(45)}&(g_{11},g_{22},g_{33},g_{45},g_{66},g_{77},g_{88})\\*
&\mathbf{S}_{\sigma}(6,2)=\{6,7,456,457\}\\*
&\mathbf{S}_{\sigma}(5,3)=\{12,13,28,38,1245,1345,2458,3458\}\\*
&\mathbf{S}_{\sigma}(4,4)=\{168,178,236,237,14568,14578,23456,23457\}\\*
&\mathbf{S}_{\sigma}(3,5)=\{1267,1367,2678,3678,124567,134567,245678,345678\}\\*
&\mathbf{S}_{\sigma}(2,6)=\{12368,12378,1234568,1234578\}\\
\end{longtable}
}
\end{footnotesize}

Finally, we apply Theorem~\ref{thm:sigmacompatible} in the $8$-dimensional case, obtaining the following classification. 
\begin{theorem}
The $8$-dimensional nice nilpotent Lie algebras admitting a diagram involution $\sigma$ and a $\sigma$-diagonal diagonal Ricci-flat metric are listed in Table~\ref{table:8amenableSigma}.
\end{theorem}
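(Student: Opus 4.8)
The plan is to apply Theorem~\ref{thm:sigmacompatible} with $k=0$, mirroring the proof of Theorem~\ref{thm:sigmacompatible7} one dimension higher. Since conditions~\ref{enum:condK} and~\ref{enum:condH} coincide with those of the diagonal case, for $k=0$ they can hold only when $M_\Delta$ is nonsurjective; moreover Theorem~\ref{thm:sigmacompatible} shows that a $\sigma$-diagonal Ricci-flat metric exists only if $(\ker\tran M_\Delta)^\sigma$ contains a vector $X$ lying off every coordinate hyperplane. So the first step is to run, over the classification of~\cite{ContiRossi:Construction}, a search for pairs $(\g,\sigma)$ with $\sigma$ an order-two diagram automorphism and $(\ker\tran M_\Delta)^\sigma$ meeting the complement of the coordinate hyperplanes. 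This is a purely linear computation that, exactly as in Example~\ref{example:2216}, discards the vast majority of the $45$ families and $872$ isolated $8$-dimensional nice Lie algebras. It is strictly more restrictive than the diagonal search of Theorem~\ref{thm:8dimricciflat}, because a generic $X\in\ker\tran M_\Delta$ need not be $\sigma$-invariant.

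For each surviving pair $(\g,\sigma)$ I would then write down the generic $\sigma$-invariant $X\in\ker\tran M_\Delta$ and impose the remaining two conditions. Condition~\ref{enum:condPsigma} is the genuinely nonlinear step: taking a basis $\alpha_1,\dots,\alpha_k$ of $(\ker\tran M_\Delta)^\sigma$, one must solve $\abs X^{\alpha_i}=\abs c^{2\alpha_i}$. When $(\ker\tran M_\Delta)^\sigma$ is one-dimensional, as for the involutions of \texttt{852:30} and \texttt{841:48}, the single free parameter of $X$ cancels and~\ref{enum:condPsigma} becomes a polynomial constraint on the structure constants; this is what produces the relations $a_1a_2=2$, $a_2=2a_1^2$, $a_1=-2a_2^2$ for the three involutions of \texttt{852:30} and $a_2\in\{\tfrac12,-1,2\}$ for those of \texttt{841:48}, just as $\lambda\in\{\tfrac12,-1,2\}$ arose in Theorem~\ref{thm:sigmacompatible7}. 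Condition~\ref{enum:condLsigma}, namely $\logsign X+\logsign c+\logsign\tilde c=M_{\Delta,2}\delta$, is finally a linear problem over $\Z_2$ whose $\sigma$-invariant solutions $\delta$ give the admissible values of $\logsign g$; sorting these by the signature $(p,q)$ of the resulting $\sigma$-diagonal metric yields the sets $\mathbf{S}_\sigma(p,q)$ of Table~\ref{table:8amenableSigma}.

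Two bookkeeping devices streamline the work. First, as in Example~\ref{example:631}, replacing $X$ by $-X$ preserves Ricci-flatness and exchanges complementary signatures, so it suffices to solve~\ref{enum:condLsigma} for one sign of the free parameter and obtain the partner set by complementation. Second, reducible cases can be cross-checked against lower-dimensional data: for instance \texttt{841:48} decomposes as \texttt{741:6}$\oplus\R$, and its three admissible parameter values and involutions must restrict to those found for \texttt{741:6} in Theorem~\ref{thm:sigmacompatible7}, while \texttt{842:111a} and \texttt{831:37} carry no parameters and are handled as isolated cases.

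The main obstacle I expect is not conceptual but computational: the classification contains infinitely many Lie algebras through the parametrized families, so the first-step search and the parameter analysis of~\ref{enum:condPsigma} must be carried out systematically, with the same computer-algebra pipeline (DEMONbLAST) used for the earlier theorems. The delicate subcases are precisely the families, where one must track which specialization of the parameters keeps all structure constants and all entries of $X$ nonzero, as required by~\ref{enum:condH}, while still admitting a $\sigma$-invariant solution of~\ref{enum:condPsigma} and~\ref{enum:condLsigma}; this is the step that collapses each continuous family to the isolated parameter values recorded in the table.
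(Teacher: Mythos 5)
Your proposal follows essentially the same route as the paper: a first linear screening for pairs $(\g,\sigma)$ with a $\sigma$-invariant element of $\ker\tran M_\Delta$ off the coordinate hyperplanes, followed by the nonlinear condition~\ref{enum:condPsigma} (which pins down the parameter values $a_2a_1=2$, $a_2=2a_1^2$, $a_1=-2a_2^2$ and $a_2\in\{\tfrac12,-1,2\}$) and the $\Z_2$-linear condition~\ref{enum:condLsigma} to extract the admissible signatures, exactly as in the paper's case-by-case computation. The bookkeeping observations (sign flip $X\mapsto -X$ exchanging complementary signatures, and the consistency check of \texttt{841:48} against \texttt{741:6}$\oplus\R$) are correct and consistent with the paper's treatment.
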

\begin{proof}
As in the proof of Theorem~\ref{thm:sigmacompatible7}, we first list all the nice Lie algebras that admit a nontrivial automorphism $\sigma$ of order two
such that~\ref{enum:condK} has a $\sigma$-invariant solution that does not belong to any coordinate hyperplane.

Among the surviving Lie algebras, we find that the following:
\begin{align*}
\texttt{8531:93}&\quad (0,0,0,- e^{12},e^{13},e^{14},e^{15},e^{27}+e^{36}+e^{45})\\%8531:93#66090 aka 8531:4\hash66090
\texttt{842:107}&\quad (0,0,0,0,- e^{12},-e^{14}+e^{23},e^{16}+e^{35},e^{26}+e^{45})%842:107#2869 aka 842:19\hash2869
\end{align*}
do not satisfy \ref{enum:condPsigma}. The following list contains the Lie algebras that do not satisfy all of \ref{enum:condH},~\ref{enum:condLsigma} and~\ref{enum:condPsigma}:
\begin{align*}
\texttt{8521:12}&\quad (0,0,0,e^{12},- e^{13},e^{23},-2 e^{16}+e^{25}+e^{34},e^{17}+e^{45})\\%8521:12#70543 aka 8521:2\hash70543
\texttt{8431:30}&\quad (0,0,0,0, a_1 e^{12},e^{15}+e^{23},e^{14}+e^{25},e^{17}+e^{26}+e^{34})\\%8431:30#46187 aka 8431:14\hash46187
\texttt{842:111b}&\quad (0,0,0,0,-e^{14}+e^{23},e^{13}+e^{24},-e^{16}+e^{25},e^{15}+e^{26}).%842:111#2908b aka 842:21#2908b
\end{align*}
The same applies to \texttt{852:30} and \texttt{841:48} for values of the parameter different from those appearing in Table~\ref{table:8amenableSigma}.%(852:30#2709 aka 852:10\hash2709)  and (841:48#2556 aka 841:16\hash2556)

For the remaining Lie algebras and each automorphism $\sigma$ of order $2$, we impose the conditions~\ref{enum:condK},~\ref{enum:condH},~\ref{enum:condLsigma} and~\ref{enum:condPsigma}. The results are listed in Table~\ref{table:8amenableSigma}.
\end{proof}
The above classifications show that diagonal and $\sigma$-diagonal metrics Ricci-flat metrics are quite scarce in the nice nilpotent context, although most of these Lie algebras do admit a Ricci-flat metric of more general type (see the references quoted in the introduction, as well as the forthcoming \cite{delBarcoContiRossi:SigmaType}). For example, all the nearly parak\"ahler $8$-dimensional examples presented in \cite[Section 7]{ContiRossi:ricci} admit a nice basis; in the notation of \cite{ContiRossi:Construction}, they can be written as
\begin{align*}
\texttt{8431:10}&\quad (0,0,0,e^{12},e^{14}+e^{23},e^{24},e^{15}-e^{34})\\%8431:10#40727
\texttt{8431:12a}&\quad (0,0,0,e^{12},e^{23}+e^{14},e^{13}+e^{24},e^{15}-e^{34})\\%8431:12#41015a
\texttt{8431:26a}&\quad (0,0,e^{12},0,e^{13}+e^{24},e^{23},e^{15}+e^{26}+e^{34})\\%8431:26#45642a
\texttt{8431:12b}&\quad (0,0,0,e^{12},e^{14}+e^{23},e^{13}-e^{24},e^{15}-e^{34})\\%8431:12#41015b
\texttt{8431:26a}&\quad (0,0,e^{12},0,e^{13}+e^{24},e^{23},e^{15}-e^{26}+e^{34}).%8431:26#45642a
\end{align*}
Thus, these nice Lie algebras admit a Ricci-flat metric of neutral signature, but not a diagonal or $\sigma$-diagonal Ricci-flat metric with $\sigma$ a diagram involution, as they do not appear in Tables~\ref{table:8amenable} and~\ref{table:8amenableSigma}. This is in sharp contrast with the case of nonzero scalar curvature, where the only known examples of Einstein nilpotent Lie algebras are nice Lie algebras with a diagonal or $\sigma$-diagonal metric.

We end this section with more examples of Ricci-flat nilpotent Lie algebras obtained by using Theorems~\ref{thm:diagonal} and~\ref{thm:sigmacompatible}.

\begin{example}
\label{example:familyRicciFlatMetrics}
Consider the nice Lie algebra $\g$:%75432:3#1007225
\[\texttt{75432:3}\quad (0,0,- e^{12},e^{13},e^{14}+e^{23},e^{15}+e^{24},e^{25}+e^{34}).\]
We see that the generic vector $X\in\ker \tran M_\Delta$ is given by 
\[X=(x,y,-x,-y,-y,y,-x,x),\] 
and for $x,y\neq0$ it satisfies \ref{enum:condK},~\ref{enum:condH},~\ref{enum:condL} and~\ref{enum:condP}. The equation $e^{M_\Delta}(g)=X$ reads
\begin{gather*}
\frac{g_3}{g_1g_2}=x,\qquad\frac{g_4}{g_1g_3}=y,\qquad\frac{g_5}{g_1g_4}=-x,\qquad\frac{g_5}{g_2g_3}=-y,\\
\frac{g_6}{g_1g_5}=-y,\qquad\frac{g_6}{g_2g_4}=y,\qquad\frac{g_7}{g_2g_5}=-x,\qquad\frac{g_7}{g_3g_4}=x,
\end{gather*}
which has the following solution:
\begin{gather*}
g_2=x g_1^2,\qquad g_3=x^2 g_1^3,\qquad g_4=x^2 y g_1^4,\\
g_5=-x^3 y g_1^5,\qquad g_6=x^3 y^2 g_1^6,\qquad g_7=x^5 y g_1^7.
\end{gather*}
Thus, we obtain a family of Ricci-flat metrics depending on $3$ parameters. We can rescale the metric to normalize $\ad|_{\g'}\colon \g'\to \g'$, imposing
\[1=g(\ad|_{\g'},\ad|_{\g'})=g(e^{34}\otimes e_7,e^{34}\otimes e_7)=x;\]
in addition, the parameter $g_1$ can be eliminated since it reflects the kernel of $M_\Delta$ (see Remark~\ref{rem:ActionKerMDelta}). We obtain the one-parameter family of Ricci-flat metrics 
\begin{equation}
 \label{eqn:one_parameter_family}
g_1=1= g_2= g_3,\qquad g_4=y, \qquad g_5=-y,\qquad g_6= y^2,\qquad g_7=y.
\end{equation}
The Riemann tensor $R\colon\Lambda^2\g\to\End \g$ and its projection $R'\colon \Lambda^2\g'\to \End \g'$
satisfy
\[g(R,R) = \frac{1}{2}  y+y^{2}+1, \quad g(R',R') = - y^{2}- y+\frac{13}{8};\]
this proves that the metrics~\eqref{eqn:one_parameter_family} are pairwise nonisometric.
\end{example}
\begin{example}
Among $2$-step nice nilpotent Lie algebras of dimension $9$, the only ones that satisfy \ref{enum:condK}, \ref{enum:condH} and \ref{enum:condL} for $k=0$ are the Lie algebras in the one-parameter family 
\[\texttt{93:86} \quad (0,0,0,0,0,0,a e^{15}+e^{24}+e^{36} ,e^{13}+e^{25}+e^{46},e^{12}+e^{34}+e^{56}).\]
The generic element of $\ker \tran M_\Delta$ is
\[X=(-\frac{1}{2}  x,x,-\frac{1}{2}  x,x,-\frac{1}{2}  x,-\frac{1}{2}  x,-\frac{1}{2}  x,-\frac{1}{2}  x,x);\]
\ref{enum:condH} is trivially satisfied for $x\neq0$, and \ref{enum:condP} gives $64a^2=1$. Thus, there are two nice Lie algebras in this family that admit a diagonal Ricci-flat metric, with admissible signatures determined by \ref{enum:condL}, i.e.
\[\tfrac12\mathbf{S}=\{125,346\}.\]
In particular, we do not obtain Ricci-flat metrics of Lorentzian signature. 
In fact, it was proved in  \cite{GuediriBinAsfour} that Ricci-flat Lorentzian metrics on $2$-step nilpotent Lie algebras have degenerate center; diagonal metrics on a nice Lie algebra never have this property.

We note that these metrics are not $\ad$-invariant, i.e. they do not satisfy
\[\langle[x, y], z\rangle+\langle y,[x, z]\rangle= 0, \quad x, y, z\in\g.\]
In fact, a diagonal metric on a $2$-step nice Lie algebra cannot be $\ad$-invariant, as one can see by taking $x$ and $y$ to be elements of the nice basis with $z=[x,y]\neq0$. This is consistent with \cite[Corollary 2.6]{delBarco:LieAlgebrasAdmitting}.
\end{example}

\begin{example}
\label{example:NonAdInvariant}
Let $\g$ be the $10$-dimensional nice Lie algebra with structure equations given by
\[(0,0,0,0,0,0,0,0, e^{12}+e^{34}+e^{56}+e^{78},e^{15}+e^{26}+e^{37}+e^{48}).\]
This Lie algebra is also $2$-step nilpotent, and \ref{enum:condK} implies
\[X=(x,-x,x,-x,-x,-x,x,x);\] 
\ref{enum:condH} and \ref{enum:condP} are trivially satisfied for $x\neq0$, and we only need to compute the admissible signatures using~\ref{enum:condL}. We obtain
\begin{multline*}
\tfrac12\mathbf{S}=\{169,160,259,250,389,380,479,470,\\
1237,1248,1345,1578,2346,2678,3567,4568\}
\end{multline*}
(where a zero stands for the index $10$). To obtain an explicit metric, we need to solve the system $e^{M_{\Delta}}(g)=X$; normalizing to $x=1$, we obtain
\[\frac{g_9}{g_1 g_2}=-\frac{g_9}{g_3 g_4}=\frac{g_9}{g_5 g_6}=-\frac{g_9}{g_7 g_8}=-\frac{g_{0}}{g_1 g_5}=-\frac{g_{0}}{g_2 g_6}=\frac{g_{0}}{g_3g_7}=\frac{g_{0}}{g_4 g_8}=1\]
which has solution:
\begin{gather*}
 g_1=\pm g_6,\ g_2=\pm g_5,\ g_4=-\frac{g_5 g_6}{g_3},\\
\ g_7=\mp\frac{g_5 g_6}{g_3},\ g_8=\pm g_3,\ g_9=g_5g_6,\ g_{0}=\mp g_5g_6.
\end{gather*}
This defines a $3$-parameter family of (normalized) Ricci-flat metrics, and since $\ker M_{\Delta}$ has dimension 3, it gives an essentially unique Ricci-flat metric (see  Remark~\ref{rem:ActionKerMDelta}).
\end{example}

\begin{example}
\label{example:ParaComplexRicciFlat}
To illustrate the relation between $\sigma$-diagonal metrics and parahermitian geometry, consider the Lie algebra $\g$ of Example~\ref{example:NonAdInvariant}, which admits the order $2$, fixed-point-free automorphism $\sigma=(13) (27) (45) (68)(90)$ (as before, zero represents the index $10$).  The $\sigma$-invariant vectors in $\ker \tran M_\Delta$ have the form \[X=( x, - x,  x, - x, - x, -x,  x, x).\] Note that \ref{enum:condH} holds trivially for $x\neq0$. Since $\logsign c=\logsign \tilde{c}=[0]$, it is easy to find $\delta$ satisfying \ref{enum:condLsigma}, i.e. $\logsign(X)= M_{\Delta,2}\delta$; thus, there exists a $\sigma$-diagonal Ricci-flat metric with signature $(5,5)$. In fact, we obtain the following list of admissible signatures:
\[\mathbf{S}_\sigma(5,5)=\{1237,1345,2678,4568,123790,134590,267890,456890\}.\]
By Remark~\ref{rem:ParaComplexSigmaNotIntegrable}, or a direct computation, the paracomplex structure defined by $\sigma$ is not integrable.

However, we can choose several almost paracomplex structures adapted to the nice basis, either integrable or not. For example, consider the almost paracomplex structure $K$ such that the eigenspace associated to the eigenvalue $+1$ is $\g^+_{K}=\Span{e_1,e_4,e_6,e_7,e_9}$ whilst the eigenspace associated to $-1$ is $\g^-_{K}=\Span{e_2,e_3,e_5,e_8,e_0}$. We note that $K$ is integrable, and by Remark~\ref{rem:SigmaMetricParahermitian} any $\sigma$-diagonal metric $g$ is compatible with the paracomplex structure $K$, defining a parahermitian structure $(K,g)$.

We recall that a parahermitian structure can be viewed as a $\GL(5,\R)$-structure and the Ricci tensor can be computed using the intrinsic torsion of a reduction to $\SL(5,\R)$ obtained by fixing a volume of one eigendistribution (refer to \cite{ContiRossi:ricci} for more details). In particular, the $\SL(5,\R)$ intrinsic torsion can be split into ten invariant components:
\[\R^5\otimes\frac{\so(5,5)}{\Sl(5,\R)}=W_1\oplus W_2\oplus W_3\oplus W_4\oplus W_5\oplus W_6\oplus W_7\oplus W_8\oplus W^{1,0}\oplus W^{0,1}.\]

To compute explicitly the intrinsic torsion, let $g$ be the generic  $\sigma$-invariant metric that solves the equation $e^{M_{\Delta}}(g)=X$ for a generic $\sigma$-invariant $X$.
Then the nonzero components of the metric tensor are:
\[g_{27}=-g_{45},\quad g_{68}=-g_{13},\quad g_{90}=-x g_{13} g_{45}.\]
We fix an orthonormal basis by setting
\begin{equation}
 \label{eqn:ehat}
\begin{gathered}
\hat{e}_1 = \frac{e_1} {g_{13}},\quad \hat{e}_2 = \frac{e_2}{-g_{45}},\quad \hat{e}_3=e_{3},\quad \hat{e}_4 = \frac{e_4} {g_{45}},\quad\hat{e}_5 = e_5,\\
\hat{e}_6 = \frac{e_6}{-g_{13}},\quad \hat{e}_7=e_7,\quad \hat{e}_8=e_8,\quad \hat{e}_9 = \frac{e_9}{-x g_{13} g_{45}},\quad \hat{e}_{0}=e_{0},
\end{gathered}
\end{equation}
and consider the $\SL(5,\R)$-reduction defined by the volume form $\hat e^{14679}$. In this case, the $\SL(5,\R)$-structure has intrinsic torsion $\tau_3+\tau_7\in W_3\oplus W_7$, and all the other components vanish. 

We have: % 1, 7, 4, 6, 9
\begin{align*}
%\tau_1&=\tau_2=\tau_4=\tau_5=\tau_6=\tau_8=\lambda^{1,0}=\lambda^{0,1}=0,\\
\tau_3&=-\frac{x}{2}\hat{e}^1\otimes\hat{e}^{20}-\frac{g_{13} x}{2} \hat{e}^{4}\otimes\hat{e}^{30}+\frac{g_{45} x}{2}\hat{e}^{6}\otimes\hat{e}^{50}+\frac{g_{13}g_{45} x}{2}\hat{e}^7\otimes\hat{e}^{80},\\
f_3&= x \hat{e}^{120} -g_{13} x \hat{e}^{340}+ g_{45} x \hat{e}^{560} -g_{13} g_{45} x \hat{e}^{780},\\
\tau_7&=-\frac{1}{2 g_{13}g_{45}}\hat{e}^{2}\otimes\hat{e}^{69}-\frac{1}{2}\hat{e}^{3}\otimes\hat{e}^{79}+\frac{1}{2 g_{13}}\hat{e}^5\otimes\hat{e}^{19}+\frac{1}{2g_{45}}\hat{e}^8\otimes\hat{e}^{49},\\
f_7&= -\frac{1}{g_{13}}\hat{e}^{159} -\frac{1}{g_{13} g_{45}} \hat{e}^{269} -\hat{e}^{379} -\frac{1}{g_{45}}\hat{e}^{489};
\end{align*}
where $f_3,f_7$ are differential forms uniquely associated to $\tau_3$ and $\tau_7$.  As the intrinsic torsion lies in $W_3\oplus W_7$, by \cite[Theorem 5.4]{ContiRossi:ricci} the Ricci tensor has the form
\begin{multline*}
\Ric=3[\Lambda(df_3+\partial(\tau_7)\hook f_3)]_{\Sl(5,\R)}-2F(\tau_3,\tau_7)\\
+ \epsilon \left(\Ric (-\partial(\tau_3)\hook \tau_3)\right)+\epsilon \left(\Ric (-\partial(\tau_7)\hook \tau_7)\right).
\end{multline*}
A computation shows that in this case the only nonzero components are
\begin{align*}
3[\Lambda(\partial(\tau_7)\hook f_3)]_{\Sl(5,\R)}
=&\ 2F(\tau_3,\tau_7)\\
=&\ \frac{x}{2} (\hat{e}^1\otimes \hat{e}^3-\hat{e}^3\otimes \hat{e}^1)+\frac{x}{2}(\hat{e}^2\otimes \hat{e}^7-\hat{e}^7\otimes \hat{e}^2) \\
&+\frac{x}{2}(-\hat{e}^4\otimes \hat{e}^5 +\hat{e}^5\otimes \hat{e}^4) +\frac{x}{2}(\hat{e}^6\otimes \hat{e}^8 -\hat{e}^8\otimes \hat{e}^6),
\end{align*}
proving that $\Ric=0$, as we already knew from Theorem~\ref{thm:sigmacompatible}. We observe that the Riemann curvature is not zero.

It is possible to define a different almost paracomplex structure which is not integrable. For example, let
$\tilde{K}$ be the almost paracomplex structure such that $\g_{\tilde{K}}^+=\Span{e_1,e_2,e_3,e_4,e_9}$ and $\g^-_{\tilde{K}}=\Span{e_5,e_6,e_7,e_8,e_0}$; note that $\tilde{K}$ is not integrable. As before, all $\sigma$-diagonal metrics $g$ are compatible with  $\tilde{K}$, giving an almost parahermitian structure. Using the orthonormal basis \eqref{eqn:ehat} and the volume form $\hat e^{12349}$, the nonzero components of the intrinsic torsion are
\begin{align*}
\tau_1&=-\frac{1}{6 g_{13} g_{45}}\hat{e}^{269},\\
\tau_2&=-\frac{1}{3 g_{13} g_{45}}\hat{e}^2\otimes\hat{e}^{69} + \frac{1}{3 g_{13} g_{45}}\hat{e}^6\otimes\hat{e}^{29} + \frac{2}{3 g_{13} g_{45}}\hat{e}^9\otimes\hat{e}^{26},\\
\tau_3&= -\frac{g_{13} x}{2}\hat{e}^4\otimes\hat{e}^{30} + \frac{g_{45} x}{2}\hat{e}^6\otimes\hat{e}^{50} + \frac{1}{2} \hat{e}^9\otimes \hat{e}^{37},\\
\tau_5&=\frac{g_{13}g_{45} x}{6}\hat{e}^{780},\\
\tau_6&=\frac{g_{13} g_{45} x}{3}\hat{e}^7\otimes\hat{e}^{80}-\frac{g_{13}g_{45} x}{3}\hat{e}^{8}\otimes\hat{e}^{70} + \frac{-2 g_{13} g_{45} x}{3} \hat{e}^{0}\otimes\hat{e}^{78},\\
\tau_7&=\frac{1}{2 g_{13}}\hat{e}^5\otimes\hat{e}^{19}+\frac{1}{2 g_{45}}\hat{e}^{8}\otimes\hat{e}^{49}+\frac{x}{2}\hat{e}^{0}\otimes\hat{e}^{12}.
\end{align*}
Using the formula in \cite{ContiRossi:ricci} we recover  that the Ricci is zero.
\end{example}

\bibliographystyle{plain}
\bibliography{ricciflat}

\small\noindent Dipartimento di Matematica e Applicazioni, Universit\`a di Milano Bicocca, via Cozzi 55, 20125 Milano, Italy.\\
\texttt{diego.conti@unimib.it}\\
\texttt{federico.rossi@unimib.it}
\end{document}